\newtheorem*{theorem*}{Theorem}
\newtheorem*{corollary*}{Corollary}
\newtheorem*{atiyahconjecture*}{Strong Atiyah Conjecture}
\newtheorem{thm}{Theorem}[section]
\newtheorem{lem}[thm]{Lemma}
\newtheorem{cor}[thm]{Corollary}
\theoremstyle{definition}
\newtheorem{defn}[thm]{Definition}
\begin{document}

\nocite{*}

\title[Freely Independent Random Variables with Non-Atomic Distributions]{Freely Independent Random Variables with Non-Atomic Distributions}

\author{Dimitri Shlyakhtenko}
\address{Department of Mathematics, UCLA, Los Angeles, California, 90095, USA}
\email{shlyakht@math.ucla.edu}

\author{Paul Skoufranis}
\address{Department of Mathematics, UCLA, Los Angeles, California, 90095, USA}
\email{pskoufra@math.ucla.edu}

\thanks{This research was supported in part by NSF grants DMS-090076, DMS-1161411 and by NSERC PGS}
\subjclass[2010]{46L54}
\date{\today}
\keywords{Freely independent random variables, non-atomic distributions, Atiyah Property for tracial $*$-algebras, free entropy, semicircular variables.}

\begin{abstract}
We examine the distributions of non-commutative polynomials of non-atomic, freely independent random variables.  In particular, we obtain an analogue of the Strong Atiyah Conjecture for free groups thus proving that the measure of each atom of any $n \times n$ matricial polynomial of non-atomic, freely independent random variables is an integer multiple of $n^{-1}$.  In addition, we show that the Cauchy transform of the distribution of any matricial polynomial of freely independent semicircular variables is algebraic and thus the polynomial has a distribution that is real-analytic except at a finite number of points.
\end{abstract}

\maketitle

\section{Introduction}
\label{sec:introduction}

One of the essential themes in the study of free probability \cite{VDN} and its applications to random matrix theory is to determine specific properties of the spectral distribution of a fixed (matricial) polynomial in freely independent random variables.  For example, some of the earliest work in free probability theory concerns free convolution, which is the study of the distribution of the polynomial $P(X,Y)=X+Y$ in two freely independent random variables.  In particular, the recent paper \cite{BMS} of Belinschi, Mai, and Speicher uses an analytic theory for operator-valued additive free convolution and Anderson's self-adjoint linearization trick to provide an algorithm for determining distributions of arbitrary polynomials.  Combining the previously known results from \cite{PV2}, \cite{HTS}, \cite{AZ}, and \cite{Sa} along with the results contained in this paper, we obtain the following summary of the known properties of distributions of matrices whose entries are polynomials in several free variables (or, equivalently, polynomials in free variables having matricial coefficients).
\begin{thm}
\label{mainresultinintro}
Let $X_1, \ldots, X_n$ be normal, freely independent random variables and let $[p_{i,j}]$ be an $\ell \times \ell$ matrix whose entries are non-commuting polynomials in $n$ variables and their adjoints such that $[p_{i,j}(X_1, \ldots, X_n)]$ is normal.  Then
\begin{enumerate}
\item \label{intromainresult1} if there exists $\{d_j\}^n_{j=1} \subseteq \mathbb{N}$ such that the measure of each atom in the probability distribution of $X_j$ is an integer multiple of $\frac{1}{d_j}$, then the measure of each atom in the probability distribution of $[p_{i,j}(X_1, \ldots, X_n)]$ is an integer multiple of $\frac{1}{d\ell}$ where $d := \prod^n_{j=1} d_j$.
\end{enumerate}
In particular,
\begin{enumerate}
\setcounter{enumi}{1}
\item \label{intromainresult2} if the probability distribution of each $X_j$ is non-atomic, then the measure of each atom in the probability distribution of $[p_{i,j}(X_1, \ldots, X_n)]$ is an integer multiple of $\frac{1}{\ell}$.
\end{enumerate}
If, in addition, $X_1, \ldots, X_n$ are freely independent semicircular variables or freely independent Haar unitaries and $[p_{i,j}(X_1, \ldots, X_n)]$ is self-adjoint, then
\begin{enumerate}
\setcounter{enumi}{2}
\item \label{intromainresult3} the spectrum of $[p_{i,j}(X_1, \ldots, X_n)]$ is a union of at most $\ell$ disjoint sets each of which is either a closed interval or a point, and
\item \label{intromainresult4} the measure of each connected subset of the spectrum of $[p_{i,j}(X_1, \ldots, X_n)]$ is a multiple of $\frac{1}{\ell}$.
\end{enumerate}
Furthermore, if $\mu$ is the spectral distribution of $[p_{i,j}(X_1, \ldots, X_n)]$, if $K$ is the support of $\mu$, and if $G_\mu$ is the Cauchy transform of $\mu$, then
\begin{enumerate}
\setcounter{enumi}{4}
\item \label{intromainresult5} $G_\mu$ is an algebraic formal power series and thus
\item \label{intromainresult6} there exists a finite subset $A$ of $\mathbb{R}$ such that if $I$ is a connected component of $\mathbb{R} \setminus A$ and $\mu|_{I}$ is the restriction of $\mu$ to $I$, then $\mu|_{I} = 0$ whenever $I \setminus K \neq \emptyset$ and if $I \subseteq K$, then $\mu|_{I}$ has probability density function $Im(g)|_{I}$ where $g$ is an analytic function defined on
\[
W := \{z \in \mathbb{C} \, \mid \, |Im(z)| < \delta\} \setminus \bigcup_{a \in A} \{a - it \, \mid \, t \in [0,\infty)\}
\]
for some $\delta > 0$ such that $g$ agrees with $G_\mu$ on $\{z \in \mathbb{C} \, \mid \, 0 < Im(z) < \delta\}$ and for each $a \in A$ there exists an $N \in \mathbb{N}$ and an $\epsilon > 0$ such that $(z-a)^N g(z)$ admits an expansion on $W \cap \{z \in \mathbb{C} \, \mid \, |z-a| < \epsilon\}$ as a convergent power series in $r_N(z-a)$ where $r_N(z)$ is the analytic $N^{\mbox{th}}$-root of $z$ defined with branch $\mathbb{C} \setminus \{-it \, \mid \, t \in [0,\infty)\}$.
\end{enumerate}
Finally, if the support of $\mu$ is contained in $[0, \infty)$, then
\begin{enumerate}
\setcounter{enumi}{6}
\item \label{intromainresult7} $\lim_{\epsilon \to 0} \int^1_\epsilon \ln(t) \, d\mu(t) > -\infty$.
\end{enumerate}
\end{thm}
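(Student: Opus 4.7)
The plan is to split the statement into two main pieces: the atom statements (1)--(2), which comprise a non-atomic analogue of the Strong Atiyah Conjecture; and the algebraicity of the Cauchy transform in (5), from which (3), (4), (6), and (7) should follow by classical algebraic-function theory.

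For (1) and (2), I would first apply Anderson's self-adjoint linearization trick to replace $[p_{i,j}(X_1,\ldots,X_n)]$ with a self-adjoint affine matrix pencil $A_0 \otimes 1 + \sum_j A_j \otimes X_j$ acting on $\mathbb{C}^L \otimes \mathcal{H}$ for some $L$, so that the atoms of the original operator correspond, up to a shift, to the kernel projection of this pencil (with an $\ell$-rescaling coming from the original matrix size). Next I would approximate each $X_j$ in $*$-moments by a variable $\tilde{X}_j$ whose distribution is purely atomic with all masses being integer multiples of $1/d_j$; when the $X_j$ are non-atomic, one can take $d_j$ arbitrarily large. Applied to the free product of the cyclic groups (or finite von Neumann algebras) generated by the $\tilde{X}_j$, the classical Strong Atiyah Conjecture forces every kernel trace of the resulting affine pencil to be an integer multiple of $1/(d\ell)$. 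Passing to the limit via $*$-strong (equivalently, weak-$*$) convergence of resolvents then yields (1), and (2) is the special case of (1) where each $d_j$ may be taken arbitrarily large.

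For (5), I would use the operator-valued additive subordination machinery of \cite{BMS}. The matrix-valued Cauchy transform $G(z) = (\mathrm{id}_{M_L} \otimes \tau)[(z - A_0 - \sum_j A_j \otimes X_j)^{-1}]$ satisfies a fixed-point equation involving the operator-valued $R$-transforms of the $X_j$. For semicircular $X_j$ these $R$-transforms are linear in their matrix argument, so the fixed-point equation becomes a polynomial matrix equation $F(z, G(z)) = 0$ with coefficients in $\mathbb{C}[z]$; taking the $L$-normalized trace shows that $G_\mu$ is algebraic over $\mathbb{C}(z)$. The Haar-unitary case is analogous via multiplicative subordination and the rational $S$-transform of a Haar unitary. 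From algebraicity, (6) is a direct Puiseux expansion at each of the finitely many branch points; (3) and (4) follow because the real support of an algebraic Cauchy transform whose minimal polynomial has degree at most $\ell$ breaks into at most $\ell$ arcs with each arc carrying a multiple of $1/\ell$ mass (this is essentially contained in \cite{Sa}); and (7) follows because an algebraic Cauchy transform has at worst polynomial, not logarithmic, blow-up at endpoints, making $\ln t$ locally integrable.

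The hardest step will be the limiting argument in the atom part. Atoms of polynomials in $\tilde{X}_j$ can merge, split, or leak mass into the continuous spectrum as the approximation is refined, and it is not \emph{a priori} obvious that the denominator $d\ell$ survives in the limit rather than being refined to any real number. Controlling this requires showing that the kernel dimension of the linearized pencil is at least semicontinuous under the chosen approximation, which I expect to be the real technical heart of the argument, and is presumably what forces the restriction to matricial polynomials and the use of Anderson's linearization rather than a more direct polynomial manipulation.
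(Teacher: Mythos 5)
Your route to parts (1) and (2) has a genuine gap at precisely the step you flag as the ``technical heart,'' and I do not believe it can be repaired as stated. Approximating each $X_j$ in $*$-moments by atomic variables $\tilde X_j$ and invoking the classical Strong Atiyah Conjecture for the approximants gives, for each approximation level $N$, kernel traces in $\frac{1}{d_N\ell}\mathbb{Z}$ with $d_N\to\infty$ in the non-atomic case; since $\bigcup_N \frac{1}{d_N\ell}\mathbb{Z}$ is dense in $\mathbb{R}$, no arithmetic rigidity survives the limit. Worse, the mass of an atom (the trace of a kernel projection) is neither upper nor lower semicontinuous under weak-$*$ convergence of distributions: atoms of the approximants can coalesce, and mass can leak between the atomic and continuous parts. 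The only known way to push Atiyah-type integrality through such limits is L\"uck's approximation machinery, which requires uniform logarithmic-determinant bounds coming from integrality of the matrix entries --- unavailable here, where the entries are arbitrary complex polynomials and the $\tilde X_j$ are not finite quotients in any algebraic sense. Obtaining the needed uniform spectral control near zero is at least as hard as the theorem itself (it is essentially part (7)). The paper avoids limits entirely: it proves directly that the Atiyah Property for tracial $*$-algebras is stable under free product with $\mathbb{C}\mathbb{F}_n$ (Theorem \ref{freeproductwithfreegrouptensorwithotherhasatiyah}), by adapting Schick's Fredholm-module argument --- two representations of $(\mathcal{A}\ast\mathbb{C}\mathbb{F}_n)\odot\mathcal{B}$ differing by a finite-rank perturbation over the right $\mathfrak{M}\overline{\otimes}\mathfrak{N}$-module structure --- and then realizes the free variables $X_1, UX_2U^*,\ldots$ inside $(\text{commuting variables})\ast\mathbb{C}\mathbb{Z}$ by conjugating with powers of a free Haar unitary; the commutative input is handled by elementary zero-set arguments for polynomials. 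Nothing resembling your linearization-plus-approximation scheme appears.

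Two further points. Your derivation of (3) and (4) from algebraicity of $G_\mu$ is incorrect: the degree of the minimal polynomial of $G_\mu$ has no a priori relation to $\ell$, and algebraicity alone bounds neither the number of components of the support nor the mass of each component. These parts come from the Pimsner--Voiculescu computation of $K_0(C^*_{\textrm{red}}(\mathbb{F}_n))$, which forces every projection in $\mathcal{M}_\ell(C^*_{\textrm{red}}(\mathbb{F}_n))$ (in particular each Riesz spectral projection onto a component) to have normalized trace in $\frac{1}{\ell}\mathbb{Z}$. By contrast, your route to (5) via the operator-valued subordination fixed-point equation $G(z)^{-1}=z-A_0-\eta(G(z))$, with $\eta$ linear for semicircular variables, is a legitimate alternative to the paper's argument (which instead shows the noncommutative generating series $\sum_w \tau(w(S_1,\ldots,S_n))w$ satisfies a proper algebraic system via the relation $\tau(S_jw)=\sum_{w=ux_jv}\tau(u)\tau(v)$ and then applies Sch\"utzenberger's Hadamard-product theorem); you would still need to justify that the relevant solution branch is isolated so that elimination yields algebraicity, and your multiplicative-subordination treatment of the Haar-unitary case does not mesh with the additive pencil produced by linearization.
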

In this theorem, by a polynomial in $X_1, \ldots, X_n$ we mean a fixed element of the $*$-algebra generated by $X_1, \ldots, X_n$.
\par
Parts (\ref{intromainresult3}) and (\ref{intromainresult4}) of Theorem \ref{mainresultinintro} follow directly from \cite{PV2}*{Corollary 3.2} which computes the $K$-groups of $C^*_{\textrm{red}}(\mathbb{F}_n)$, the reduced group C$^*$-algebras of the free groups.  The characterization of the $K_0$-group immediate implies that the normalized trace of any projection in $\mathcal{M}_\ell(C^*_{\textrm{red}}(\mathbb{F}_n))$ is an integer multiple of $\ell^{-1}$.  Notice that part (\ref{intromainresult4}) of Theorem \ref{mainresultinintro} does not imply part (\ref{intromainresult2}) of Theorem \ref{mainresultinintro} in the setting of part (\ref{intromainresult4}) as atoms may occur inside a closed interval of the spectrum.  Alternatively, these results were obtained using random matrix techniques in \cite{HTS}.
\par 
Notice that part (\ref{intromainresult2}) of Theorem \ref{mainresultinintro} applies when $X_1,\ldots,X_n$ are freely independent semicircular variables.  Since freely independent semicircular variables describe the non-commutative law of certain independent large random matrices (see \cite{VDN}) we obtain the following application to random matrix theory.  
\par
For each $N \in \mathbb{N}$ let $X_1(N),\ldots, X_n(N)$ be self-adjoint Gaussian random matrices (or, more generally, matrices with independent, identically distributed entries satisfying certain moment conditions; see \cite{VDN} or \cite{HP} for details) and let $p$ be an arbitrary non-constant non-commutative polynomial in $n$ variables which is self-adjoint in the sense that $Y(N) = p(X_1(N),\ldots, X_n(N))$ is always a self-adjoint matrix. Let $\mu_N$ be the empirical spectral measure of $Y(N)$ (that is, $\mu_N[a,b]$ is the average proportion of eigenvalues of $Y(N)$ which lie in $[a,b]$).
\begin{cor}
\label{introcor}
With the notation as above, the measures $\mu_N$ converge to a non-atomic limiting measure $\mu$.
\end{cor}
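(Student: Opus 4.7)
The plan is to combine Voiculescu's asymptotic freeness theorem with the scalar ($\ell=1$) case of part~(\ref{intromainresult2}) of Theorem~\ref{mainresultinintro}.

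First, I would invoke the standard result (see \cite{VDN} or \cite{HP}) that, under the stated moment hypotheses, the matrices $X_1(N),\ldots,X_n(N)$ converge in $*$-distribution, almost surely with respect to the normalized trace $\frac{1}{N}\operatorname{tr}$, to a free semicircular family $X_1,\ldots,X_n$ in some tracial $W^*$-probability space $(M,\tau)$. This $*$-moment convergence passes through any fixed noncommutative polynomial, so the moments of $Y(N) = p(X_1(N),\ldots,X_n(N))$ converge almost surely to those of $Y := p(X_1,\ldots,X_n)$; since $Y$ is bounded and self-adjoint, the method of moments upgrades this to almost sure weak convergence of the empirical spectral measures $\mu_N$ to the spectral distribution $\mu$ of $Y$. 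At this stage the existence of the limit $\mu$ is established, and it remains to show $\mu$ is non-atomic.

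To this end I would apply part~(\ref{intromainresult2}) of Theorem~\ref{mainresultinintro} to $Y$, viewed as a $1\times 1$ matricial polynomial in the free semicircular family. Since the semicircular distribution is non-atomic, every atom of $\mu$ has mass a positive integer multiple of $1/\ell = 1$, hence exactly $1$. Therefore $\mu$ is either non-atomic or a single Dirac mass $\delta_c$ at some $c\in\mathbb{R}$. To rule out the latter, note that if $Y = c \cdot 1$ in $(M,\tau)$ then the polynomial $p - c$ annihilates the free semicircular family; but the canonical $*$-homomorphism from the free self-adjoint $*$-algebra on $n$ generators into $(M,\tau)$ is injective, because faithfulness of $\tau$ combined with the explicit non-crossing pair-partition formula for mixed semicircular moments makes $q \mapsto \tau(q q^*)$ positive-definite on this algebra. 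Hence $p - c$ would have to be the zero polynomial, contradicting the non-constancy of $p$.

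The main obstacle here is the final step: the integrality statement from Theorem~\ref{mainresultinintro} is, by itself, only strong enough to narrow $\mu$ down to either a non-atomic measure or a single point mass, so the Dirac case must be excluded by an independent, genuinely algebraic input, namely that the $*$-algebra generated by a free semicircular family is free. The first two steps, by contrast, are essentially a packaging exercise: asymptotic freeness for Gaussian matrices delivers $\mu_N \to \mu$ almost surely, and the main theorem of the paper then does all of the analytic work.
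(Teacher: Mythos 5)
Your proposal is correct and follows exactly the paper's route: Voiculescu's asymptotic freeness gives $\mu_N\to\mu$ where $\mu$ is the law of $p(X_1,\ldots,X_n)$ for free semicirculars, and part~(\ref{intromainresult2}) of Theorem~\ref{mainresultinintro} with $\ell=1$ forces any atom to have mass $1$. You supply more detail than the paper on the final step (the paper merely says "provided $p$ is non-constant"), and your exclusion of the Dirac case via the absence of polynomial relations among free semicirculars is a valid way to make that step precise.
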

Indeed, by a result of Voiculescu (see \cite{VDN} or \cite{HP}), it is known that $\mu_N$ converges weakly to a measure $\mu$ that is the law of $p(X_1, \ldots, X_n)$ where $X_1, \ldots, X_n$ are freely independent semicircular variables.  Thus part (\ref{intromainresult2}) of Theorem \ref{mainresultinintro} implies that $\mu$ has no atoms provided $p$ is non-constant.  
\par
The motivation for the proof of Theorem \ref{mainresultinintro} part (\ref{intromainresult2}) stems from the knowledge that the statement of the theorem holds by the Strong Atiyah Conjecture for the free groups in the case when $X_1, \ldots, X_n$ are freely independent Haar unitaries.  The Strong Atiyah Conjecture (motivated by the work in \cite{Atiyah} and proved for a class of groups that includes free groups by Linnell in \cite{Linnell}; also see \cite{Luck} and references therein) states that the kernel projection of an arbitrary matrix with entries taken from the group ring $\mathbb{CF}_n$ of a free group on $n$ generators must have integer von Neumann trace.  To prove our theorem, we consider the analogue of the Strong Atiyah Conjecture for $*$-subalgebras of a tracial von Neumann algebra.  We call this notion the Strong Atiyah Property (since it is known that the Strong Atiyah Conjecture does not hold even for arbitrary group algebras; see \cite{GZ} or \cite{Luck} for example).  It is not hard to see that the Strong Atiyah Property holds for $*$-algebras generated by a single normal element with non-atomic spectral measure.  Our main result states that the Strong Atiyah Property for $*$-algebras is stable under taking free products (in the sense of free probability theory \cite{VDN}) with the group algebra of a free group. Our proof closely follows \cite{Sc} with the main difference of being adapted for free products of algebras and not groups.  Using this result, we are able to conclude that the Strong Atiyah Property holds for any $*$-algebra generated by $X_1,\ldots,X_n$ provided that $X_j$ are free and each has a non-atomic distribution.  
\par 
The proof that part (\ref{intromainresult5}) of Theorem \ref{mainresultinintro} is true in the case $X_1, \ldots, X_n$ are freely independent Haar unitaries is contained in the proof of \cite{Sa}*{Theorem 3.6}.  In Section \ref{sec:CauchyTransformOfSemicircularsIsAlgberaic} we will adapt the proof of \cite{Sa}*{Theorem 3.6} to the semicircular case (see Theorem \ref{cauchytransformofsemicircularsisalgebraic}).  The main idea of the proof is to use the fact that if a certain tracial map on formal power series in a single variable with coefficients in a tracial $*$-algebra $\mathcal{A}$ maps rational formal power series to algebraic formal power series, then the Cauchy transform of a measure associated to a self-adjoint element of $\mathcal{A}$ is algebraic (see Lemma \ref{tracialmaptakesrationaltoalgebraic}).  The proof that the tracial map is as desired in the case $\mathcal{A}$ is generated by semicircular variables follows from demonstrating that a specific formal power series in non-commuting variables is algebraic via a specific property of the semicircular variables (see Lemma \ref{certainformalpowerseriesisalgebraic}).  
\par
It is an interesting question whether the Cauchy transform of any polynomial in freely independent random variable $X_1, \ldots, X_n$ is algebraic provided the Cauchy transform of each $X_j$ is algebraic. 
\par  
The question of whether the Cauchy transform of a measure is an algebraic power series as in part (\ref{intromainresult5}) of Theorem \ref{mainresultinintro} has previously been studied in particular cases.  For example  \cite{RE}*{Example 3.8} demonstrates that the Cauchy transform of the quarter-circular distribution is not algebraic.  Furthermore \cite{RE}*{Corollary 9.5} demonstrates that if $\mu$ and $\nu$ are compactly supported probability measures on $\mathbb{R}$ which have algebraic Cauchy transforms and are the weak limits of the empirical spectral measures of $N \times N$ random matrices, then the free additive convolution $\mu \boxplus \nu$ (see \cite{Voi}) is algebraic.  Moreover, \cite{RE}*{Corollary 9.6} demonstrates that if, in addition, $\mu$ and $\nu$ have support contained in the positive real axis, then the free multiplicative convolution $\mu \boxtimes \nu$ (see \cite{VoMul}) is algebraic.  This question was also considered in \cite{AZ} for limit laws of certain random matrices.  In fact a result much like ours was hinted at in that paper.  Using \cite{AZ}*{Theorem 2.9} we see that part (\ref{intromainresult6}) of Theorem \ref{mainresultinintro} is implied by part (\ref{intromainresult5}) of Theorem \ref{mainresultinintro}.  In particular, part (6) of Theorem \ref{mainresultinintro} directly provides information about the probability density function of $\mu$ by the Stietjes inversion formula.
\par
Finally, in Section \ref{sec:CauchyTransformOfSemicircularsIsAlgberaic}, we will prove part (7) of Theorem \ref{mainresultinintro} by following the proof of \cite{Sa}*{Theorem 3.6} which demonstrates that if the Cauchy transform of a measure is algebraic, then the Novikov-Shubin invariants of the measure are non-zero.  Our interest in part (7) of Theorem \ref{mainresultinintro} comes from the following question: if $p$ is an arbitrary, non-constant, self-adjoint polynomial in $n$ free semicircular variables, must it be the case that the free entropy (as defined in \cite{VoFreeEntopy}) of $p$ is finite?  Indeed elementary arguments may be used to show that if $S$ is a semicircular variable and $p$ is a non-constant polynomial such that $p(S)$ is self-adjoint, then the spectral measure of $p(S)$ has finite free entropy.  Further evidence that this must be true comes from a strengthened version of part (\ref{intromainresult2}) of Theorem \ref{mainresultinintro}  for matrices of the form $[p_{i,j}]$ where each $p_{i,j}\in \textrm{Alg}(S_1,\ldots,S_n)\otimes\textrm{Alg}(S_1,\ldots,S_n)$, which we prove below.  In particular, it follows that the vector of non-commutative difference quotients $JP:=[\partial_1 P,\ldots,\partial_n P]$ (see \cite{Vo5}) has maximal rank whenever $P$ is a non-constant, non-commutative polynomial in $n$ free semicircular variables. 
\par 
Given the success of \cite{BMS} in providing an algorithm for determining the distributions of (matricial) polynomials in semicircular variables, it would also be of interest if an alternate proof of Theorem \ref{mainresultinintro} could be constructed using the ideas and techniques from \cite{BMS}.

\section{The Atiyah Property for Tracial $*$-Algebras}
\label{sec:defnandexam}

In this section we will introduce the notion of the Atiyah Property for tracial $*$-algebra.  In addition, several examples of tracial $*$-algebras that satisfy the Atiyah Property, which will be of use in Section \ref{sec:atiyahconjecturefornoncommutativerandomvariables}, will be provided.
\par 
If $\ell \in \mathbb{N}$ and $\tau$ is a linear functional on an algebra $\mathcal{A}$, then $\tau_\ell$ will denote the linear functional on $\mathcal{M}_\ell(\mathcal{A})$ given by 
\[
\tau_\ell([A_{i,j}]) = \sum^\ell_{i=1} \tau(A_{i,i})
\]
for all $[A_{i,j}] \in \mathcal{M}_\ell(\mathcal{A})$.  Notice that if $\tau$ is tracial (that is, $\tau(AB) = \tau(BA)$ for all $A,B \in \mathcal{A}$), then $\tau_\ell$ is tracial.
\begin{defn}
Let $\mathcal{A}$ be a $*$-subalgebra of $\mathcal{B}(\mathcal{H})$, let $\tau$ be a vector state that is tracial on $\mathcal{A}$, and let $\Gamma$ be an additive subgroup of $\mathbb{R}$ containing $\mathbb{Z}$.  We say that $(\mathcal{A}, \tau)$ has the Atiyah Property with group $\Gamma$ if for any $n,m \in \mathbb{N}$ and $A \in \mathcal{M}_{m,n}(\mathcal{A})$ the kernel of the induced operator $L_A : \mathcal{H}^{\oplus n} \to \mathcal{H}^{\oplus m}$ given by $L_A(\xi) = A\xi$  satisfies $\tau_m(ker(L_A)) \in \Gamma$.  We say that $(\mathcal{A}, \tau)$ has the Strong Atiyah Property if $(\mathcal{A},\tau)$ has the Atiyah Property with group $\mathbb{Z}$.
\end{defn}
Of course the case that $\Gamma = \mathbb{R}$ is of no interest in the above definition.  By the fact that $ker(L_A) = ker(L_{A^*A})$, it suffices to consider $n = m$ in the above definition.  In this case it is easy to see that $ker(L_A) = \overline{Im(L_{A^*})}$ so we may replace kernels with images in the above definition.  Furthermore, if $\mathcal{A}$ is equipped with a C$^*$-norm and $\tau$ is faithful on the C$^*$-algebra generated by $\mathcal{A}$, the tracial representation of $\mathcal{A} \subseteq \mathcal{B}(\mathcal{H})$ clearly does not matter.
\par 
It is clear that if $G$ is a group that satisfies the Strong Atiyah Conjecture (e.g. any free group) and $\tau_G$ is the canonical tracial state on $L(G)$ (the group von Neumann algebra), then $(\mathbb{C}G, \tau_G)$ has the Strong Atiyah Property.  The following provides examples of a tracial $*$-algebras that have the Atiyah Property.  In particular, the following result implies that the tracial $*$-algebra generated by a single semicircular variable has the Strong Atiyah Property with respect to the canonical tracial state (see \cite{VDN} or \cite{HP}).
\begin{lem}
\label{singlemeasuresatsifiesatiyahwithgroupgeneratedbyatoms}
Let $\mu$ be a compactly supported probability measure on $\mathbb{C}$.  Let $\Gamma$ be the topological closure of the additive subgroup of $\mathbb{R}$ generated by $1$ and the measures of the atoms of $\mu$ and let $(\mathcal{A},\tau)$ be the tracial $*$-subalgebra of $L_\infty(\mu) \subseteq \mathcal{B}(L_2(\mu))$ generated by multiplication by polynomials with trace 
\[
\tau(M_p) = \int_{\mathbb{C}} p \, d\mu.
\]
Then $(\mathcal{A},\tau)$ has the Atiyah Property with group $\Gamma$.
\end{lem}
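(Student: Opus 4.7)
The plan is to exploit the commutativity of $\mathcal{A}\subseteq L_\infty(\mu)$ in order to realize $L_A$ as a pointwise matrix-multiplication operator, write the trace of the kernel projection as an integral of pointwise kernel dimensions, and then decompose this integral into an atomic contribution that clearly lies in the subgroup generated by the atom measures, and a continuous contribution that is forced to be an integer.

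First, I would reduce to square matrices via $\ker(L_A)=\ker(L_{A^*A})$, and so assume $A\in\mathcal{M}_n(\mathcal{A})$. Because $\mathcal{A}$ is commutative and embeds in $L_\infty(\mu)$, each entry of $A$ is the $L_\infty$-class of a polynomial in $z$ and $\bar{z}$, so one may view $A$ as a $\mu$-a.e.\ defined matrix-valued function $z\mapsto A(z)\in\mathcal{M}_n(\mathbb{C})$. The induced operator $L_A$ then acts on $L_2(\mu)^{\oplus n}$ by $(L_Af)(z)=A(z)f(z)$, its kernel projection is the projection-valued function $z\mapsto P_{\ker A(z)}$, and hence
\[
\tau_n(\ker L_A)=\int_{\mathbb{C}}\dim\ker A(z)\,d\mu(z).
\]

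Next, I would decompose $\mu=\mu_c+\sum_i\alpha_i\delta_{a_i}$ into its continuous part and its (possibly countable) family of atoms. The atomic contribution is $\sum_i\alpha_i\dim\ker A(a_i)$, a sum of non-negative integer multiples of the atom measures $\alpha_i$; its partial sums lie in the subgroup generated by $1$ and the $\alpha_i$, so the (convergent) full sum lies in the closure $\Gamma$. Meanwhile, letting $r=\max_z\mathrm{rank}\,A(z)$, the rank-drop locus $E=\{z:\mathrm{rank}\,A(z)<r\}$ is cut out by the $r\times r$ minors of $A(z)$---polynomials in $z,\bar{z}$ not simultaneously vanishing on the support of $\mu$---and is therefore a proper real-algebraic subset of $\mathbb{C}$. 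Granting that $\mu_c(E)=0$, the integrand equals $n-r$ almost everywhere with respect to $\mu_c$, so the continuous contribution equals $(n-r)\bigl(1-\sum_i\alpha_i\bigr)\in\Gamma$, and summing the two contributions gives $\tau_n(\ker L_A)\in\Gamma$.

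The step I expect to be the main obstacle is the vanishing of $\mu_c(E)$, i.e.\ ensuring that the continuous part of $\mu$ does not charge the rank-drop locus. When $\mathrm{supp}(\mu)\subseteq\mathbb{R}$, the locus $E$ is finite, and in fact the whole conclusion is immediate via Smith normal form over the principal ideal domain $\mathbb{C}[x]$: writing $UAV=\mathrm{diag}(d_1,\dots,d_n)$ with $U,V\in\mathrm{GL}_n(\mathbb{C}[x])$ makes $L_U$ and $L_V$ boundedly invertible on $L_2(\mu)^{\oplus n}$, so that $\ker L_A$ is $\mathcal{A}$-linearly isomorphic to $\ker L_D$ and therefore $\tau_n(\ker L_A)=\sum_i\mu(\{d_i=0\})$, each summand being either $1$ (when $d_i\equiv 0$) or a finite sum of atom measures at the zeros of the nonzero polynomial $d_i$, and hence in $\Gamma$. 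For general $\mu$ on $\mathbb{C}$ the ring $\mathbb{C}[z,\bar z]$ is not a principal ideal domain, so this Smith normal form argument must be replaced by a direct analysis of how the rank-drop locus interacts with the continuous part of $\mu$; this is the main technical content of the proof.
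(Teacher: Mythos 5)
Your route is the same as the paper's: realize $A$ as a matrix-valued function, write the trace of the kernel (equivalently, image) projection as $\int \dim\ker A(z)\,d\mu(z)$ — the paper integrates $\operatorname{rank}A(t)$ against $\mu$, which is the same information — split $\mu$ into atomic and non-atomic parts, and control the rank-drop locus by algebraicity of minors. The problem is that you stop exactly where the paper does its one piece of work. The step you ``grant,'' namely $\mu_c(E)=0$, is handled in the paper as follows: each minor of $A(t)$ is a polynomial in the single variable $t$, hence either vanishes identically or vanishes at only finitely many points; therefore $t\mapsto\operatorname{rank}A(t)$ is constant, equal to some $r$, off a \emph{finite} set $F$, and $\tau_\ell(P)=r\,\mu(\mathbb{C}\setminus F)+\sum_{t\in F}\operatorname{rank}A(t)\,\mu(\{t\})$ is an integer combination of $1$ and finitely many atom masses, hence lies in $\Gamma$. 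In particular no countable sum over atoms and no Smith-normal-form detour is needed; the finiteness of $F$ does all the work. As written, your proposal proves nothing beyond the bookkeeping, because the decisive fact is exactly the one left unproved.

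Moreover, your suspicion about $\mathbb{C}[z,\bar z]$ is not a technicality you can wave away: the finiteness of the zero set of a non-zero minor holds for holomorphic polynomials in $t$ (or when $\operatorname{supp}\mu\subseteq\mathbb{R}$, where your Smith-form remark also applies), but since $\mathcal{A}$ is a $*$-algebra its elements are genuinely polynomials in $z$ and $\bar z$, whose zero sets are real-algebraic curves that a planar measure can charge. Concretely, for $\mu=\tfrac12(\text{normalized arc length on }S^1)+\tfrac12(\text{normalized area on the unit disk})$, which is non-atomic so $\Gamma=\mathbb{Z}$, the $1\times1$ matrix $M_{z\bar z-1}$ has kernel projection $M_{\chi_{S^1}}$ of trace $\tfrac12$. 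So $\mu_c(E)=0$ can actually fail in the stated generality, and your argument cannot be completed without restricting to holomorphic entries, to measures supported in $\mathbb{R}$ (or on sets meeting every proper real-algebraic curve in a $\mu$-null or co-null set), or to some comparable hypothesis; this is also the reading under which the paper's own finite-zero-set step is valid. You should either impose such a restriction explicitly or supply an argument for $\mu_c(E)=0$ in whatever setting you intend.
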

\begin{proof}
Let $\delta_t$ denote the point-mass measure at $t \in \mathbb{C}$.  Then we can write 
\[
\mu = \nu + \sum_{t} \alpha_t \delta_t
\]
where $\nu$ is a non-atomic, compactly supported measure on $\mathbb{C}$ and $\alpha_t \in \Gamma$ for all $t$.  Therefore $\nu(\mathbb{C}) \in \Gamma$ by the construction of $\Gamma$.  
\par
To see that $(\mathcal{A},\tau)$ has the Atiyah Property with group $\Gamma$, fix $\ell \in \mathbb{N}$ and let $[A_{i,j}] \in \mathcal{M}_\ell(\mathcal{A})$.  Viewing each $A_{i,j}$ as a polynomial, we can view $[A_{i,j}]$ as a measureable function from $\mathbb{C}$ to $\mathcal{M}_\ell(\mathbb{C})$.  Moreover, if $P$ is the projection onto the image of $[A_{i,j}]$ (which is in the von Neumann algebra generated by $\mathcal{M}_\ell(\mathcal{A})$ and thus is in $L_\infty(\mu) \overline{\otimes} \mathcal{M}_\ell(\mathbb{C})$) and $P_t \in \mathcal{M}_\ell(\mathbb{C})$ is the projection onto the image of $[A_{i,j}(t)]$, it is elementary to see that $P(t) = P_t$ $\mu$-almost everywhere.  Hence
\[
\tau_\ell(P) = \int_{\mathbb{C}} tr(P(t)) \, d\mu(t) = \int_{\mathbb{C}} \textrm{rank}([A_{i,j}(t)]) \, d\mu(t).
\]
\par 
Recall the rank of a matrix $M \in \mathcal{M}_\ell(\mathbb{C})$ may be obtained by computing the maximum size of a submatrix with non-zero determinant.  However, the pointwise determinant of submatrices of $[A_{i,j}(t)]$ is a polynomial in $t$ and thus is either zero everywhere or non-zero except at a finite number of points.  Hence we obtain that $t \mapsto \textrm{rank}([A_{i,j}(t)])$ is an integer-valued function that is constant except at a finite number of points which may or may not be atoms of $\mu$.  It is then easy to deduce that $\tau_\ell(P)$ is an integer-valued combination of elements of $\Gamma$ and thus lies in $\Gamma$.
\end{proof}
Extending these integration techniques, we obtain the following result for the product of measures on $\mathbb{C}$.  Notice that the tracial $*$-algebra constructed is the tensor product of tracial $*$-algebras from Lemma \ref{singlemeasuresatsifiesatiyahwithgroupgeneratedbyatoms}.
\begin{lem}
\label{productofnonatomicmeasuressatsifiesatiyah}
Let $n \in \mathbb{N}$ and let $\{\mu_j\}^n_{j=1}$ be non-atomic, compactly supported probability measures on $\mathbb{C}$.  Let $\mu$ be the product measure of $\{\mu_j\}^n_{j=1}$ and let $(\mathcal{A},\tau)$ be the tracial $*$-algebra generated by multiplication by the coordinate functions $\{x_j\}^n_{j=1}$ with trace 
\[
\tau(M_f) = \int_{\mathbb{C}^n} f \, d\mu.
\]
Then $(\mathcal{A},\tau)$ has the Strong Atiyah Property . 
\end{lem}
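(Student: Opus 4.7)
The plan is to adapt the proof of Lemma \ref{singlemeasuresatsifiesatiyahwithgroupgeneratedbyatoms} to several variables and exploit that the generic rank of a polynomial matrix is constant off a proper algebraic set.  Because $\Gamma = \mathbb{Z}$ here, the only thing to verify is that the exceptional algebraic set is $\mu$-null.

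First, I would repeat the setup verbatim.  Fix $\ell \in \mathbb{N}$ and $[A_{i,j}] \in \mathcal{M}_\ell(\mathcal{A})$; each $A_{i,j}$ is a polynomial in the coordinate functions $x_1,\ldots,x_n$ and may be viewed as a measurable function from $\mathbb{C}^n$ to $\mathcal{M}_\ell(\mathbb{C})$.  Let $P$ be the projection onto the image of $[A_{i,j}]$ in the von Neumann algebra $L_\infty(\mu) \overline{\otimes} \mathcal{M}_\ell(\mathbb{C})$; as in the one-variable case, $P(t)$ is $\mu$-a.e.\ the projection onto the image of $[A_{i,j}(t)]$, so
\[
\tau_\ell(P) = \int_{\mathbb{C}^n} \mathrm{rank}([A_{i,j}(t)]) \, d\mu(t).
\]

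The main step is the following auxiliary claim: if $q \in \mathbb{C}[z_1,\ldots,z_n]$ is a non-zero polynomial and $\mu = \mu_1 \times \cdots \times \mu_n$ is a product of non-atomic compactly supported probability measures on $\mathbb{C}$, then $\mu(\{t \in \mathbb{C}^n : q(t) = 0\}) = 0$.  I would prove this by induction on $n$.  For $n=1$, a non-zero polynomial has only finitely many zeros, and a non-atomic measure charges no finite set.  For the inductive step, write $q(z_1,\ldots,z_n) = \sum_k q_k(z_1,\ldots,z_{n-1}) z_n^k$.  For each fixed $(z_1,\ldots,z_{n-1})$ the slice is a polynomial in $z_n$; by the $n=1$ case its zero set is $\mu_n$-null unless all coefficients $q_k(z_1,\ldots,z_{n-1})$ vanish.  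Since $q \not\equiv 0$ some $q_k \not\equiv 0$, and by the inductive hypothesis the set of $(z_1,\ldots,z_{n-1})$ where that particular $q_k$ vanishes is $(\mu_1 \times \cdots \times \mu_{n-1})$-null.  Fubini then gives the claim.

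Given the claim, let $r$ be the maximum value of $\mathrm{rank}([A_{i,j}(t)])$ over $t \in \mathbb{C}^n$.  The set $\{t : \mathrm{rank}([A_{i,j}(t)]) < r\}$ is cut out by the simultaneous vanishing of all $r \times r$ minors of $[A_{i,j}(\cdot)]$, which are polynomials in $t_1,\ldots,t_n$; by the definition of $r$, at least one such minor is not identically zero, so this exceptional set is contained in the zero set of a non-zero polynomial and hence is $\mu$-null by the claim.  Therefore $\mathrm{rank}([A_{i,j}(t)]) = r$ for $\mu$-a.e.\ $t$, and $\tau_\ell(P) = r \in \mathbb{Z}$, which is the Strong Atiyah Property.

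The only real obstacle is the auxiliary claim about zero sets of polynomials under product measures with non-atomic factors; everything else is a routine adaptation of the preceding lemma.  The claim itself is not a measure-theoretic triviality because non-atomic measures need not be absolutely continuous with respect to Lebesgue measure, so one cannot simply invoke that algebraic subvarieties have Lebesgue measure zero; the induction on the number of variables via Fubini is exactly what circumvents this.
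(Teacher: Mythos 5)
Your proposal is correct and follows essentially the same route as the paper: both reduce the problem to showing that the zero set of a non-zero polynomial in $n$ variables is $\mu$-null, prove this by induction on $n$ combined with Fubini, and then apply it to the minors of $[A_{i,j}(\cdot)]$ to conclude the rank is constant $\mu$-a.e. The only (immaterial) difference is that you slice in the last variable and bound the bad set of base points by the zero set of a non-zero coefficient polynomial, whereas the paper slices the other way and uses a divisibility argument to show the slice polynomial is identically zero for only finitely many values of $x_n$.
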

\begin{proof}
We claim that if $p(x_1, \ldots, x_n)$ is a polynomial and $V$ is the zero set of $p(x_1, \ldots, x_n)$, then $\mu(V) \in \{0,1\}$ and $\mu(V) = 1$ only occurs when $p(x_1, \ldots, x_n)$ is the zero polynomial.  To prove this claim, we proceed by induction on $n$ with the case $n = 1$ following from Lemma \ref{singlemeasuresatsifiesatiyahwithgroupgeneratedbyatoms}.  Suppose the claim holds for $n-1$.  Let $p(x_1, \ldots, x_n)$ be any polynomial and let $\nu$ be the product measure of $\{\mu_j\}^{n-1}_{j=1}$.  Clearly the claim is trivial if $p(x_1,\ldots, x_n)$ is the zero polynomial so suppose $p(x_1,\ldots, x_n)$ is not the zero polynomial.  For each $t \in \mathbb{C}$ let
\[
V_t := \{(x_1, \ldots, x_{n-1}) \in \mathbb{C}^n \, \mid \, p(x_1, \ldots, x_{n-1}, t) = 0\}.
\]
Therefore the zero set of $p(x_1,\ldots, x_n)$ is $\bigcup_{t \in\mathbb{C}} V_t$ and $\nu(V_t) \in \{0,1\}$ for each $t \in \mathbb{C}$ by the induction hypothesis.  If $\nu(V_t) = 1$, then $p(x_1, \ldots, x_{n-1}, t)$ must be the zero polynomial which implies $x_n - t$ divides $p(x_1, \ldots, x_n)$ since we can write 
\[
p(x_1, \ldots, x_n) = \sum_{k=1}^{n-1} \sum_{i_k \geq 0} p_{i_1, \ldots, i_{n-1}}(x_n)x_1^{i_1} \cdots x^{i_{n-1}}_{n-1}
\]
where $p_{i_1, \ldots, i_{n-1}}$ are polynomials and if $p_{i_1, \ldots, i_{n-1}}(t) \neq 0$ for at least one $i_1, \ldots, i_{n-1}$, then clearly $p(x_1, \ldots, x_{n-1}, t)$ would not be the zero polynomial.  By degree arguments there are at most a finite number of $t \in \mathbb{C}$ such that $x_n - t$ divides $p(x_1, \ldots, x_n)$ so $\nu(V_t) = 0$ except for a finite number of $t \in \mathbb{C}$.  Since $\mu_n$ contains no atoms, by integrating using Fubini's Theorem we easily obtain that the zero set of $p(x_1,\ldots, x_n)$ has zero $\mu$-measure as desired.
\par
To see that $(\mathcal{A},\tau)$ has the Strong Atiyah Property, fix $\ell \in \mathbb{N}$ and let $[A_{i,j}] \in \mathcal{M}_\ell(\mathcal{A})$.  Thus each $A_{i,j}$ is a multivariable polynomial.  If $P$ is the projection onto the image of $[A_{i,j}]$, then, as in the proof of Lemma \ref{singlemeasuresatsifiesatiyahwithgroupgeneratedbyatoms}, we obtain that
\[
\tau_\ell(P) = \int_{\mathbb{C}^n} \textrm{rank}([A_{i,j}(t_1, \ldots, t_n)]) \, d\mu(t_1, \ldots, t_n).
\]
Since the rank of a matrix can be determined by computing the largest non-zero determinant of a submatrix and since the determinant of any submatrix of $[A_{i,j}(x_1, \ldots, x_n)]$ is a polynomial in $x_1, \ldots, x_n$ whose zero set either has zero or full $\mu$-measure, the result is complete.
\end{proof}  
Next we endeavour to extend the above result to include compactly supported probability measures on $\mathbb{R}$ that have atoms.  We will only focus on measures with atoms that lie in certain subgroups of $\mathbb{Q}$ since the main result of Section \ref{sec:atiyahconjecturefornoncommutativerandomvariables} will also only apply to these groups.
\par 
To discuss such measures, for an additive subgroup $\Gamma$ of $\mathbb{Q}$ and a $d \in \mathbb{N}$ we define
\[
\frac{1}{d} \Gamma := \left\{\frac{1}{d} g \, \mid \, g \in \Gamma\right\},
\]
which is clearly an additive subgroup of $\mathbb{Q}$ that contains $\mathbb{Z}$ if $\Gamma$ contains $\mathbb{Z}$.  As such, the following result is trivial.
\begin{lem}
\label{tensoringwithmatrixalgebranoproblem}
Let $(\mathcal{A}, \tau)$ be a tracial $*$-algebra that has the Atiyah Property with group $\Gamma$ and let $\ell \in \mathbb{N}$.  Then $(\mathcal{M}_\ell(\mathcal{A}), \frac{1}{\ell}\tau_\ell)$ has the Atiyah Property with group $\frac{1}{\ell}\Gamma$.
\end{lem}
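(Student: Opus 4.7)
The plan is to reduce the statement to the Atiyah Property of the original algebra $(\mathcal{A},\tau)$ via the standard identification $\mathcal{M}_k(\mathcal{M}_\ell(\mathcal{A})) \cong \mathcal{M}_{k\ell}(\mathcal{A})$. Given $A \in \mathcal{M}_{m,n}(\mathcal{M}_\ell(\mathcal{A}))$, I would first reshuffle indices to view $A$ as a matrix $\widetilde{A} \in \mathcal{M}_{m\ell, n\ell}(\mathcal{A})$ with entries in $\mathcal{A}$. Under the natural unitary identifications $\mathcal{H}^{\oplus n\ell} \cong (\mathcal{H}^{\oplus \ell})^{\oplus n}$ and $\mathcal{H}^{\oplus m\ell} \cong (\mathcal{H}^{\oplus \ell})^{\oplus m}$, the induced operator $L_A$ coincides with $L_{\widetilde{A}}$, so their kernel projections agree as projections on the same Hilbert space.

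Next, I would verify that the traces line up correctly. Writing $B = [B_{i,j}] \in \mathcal{M}_n(\mathcal{M}_\ell(\mathcal{A}))$ with $B_{i,j} = [b^{i,j}_{p,q}]_{p,q=1}^\ell$, the diagonal of its image $\widetilde{B} \in \mathcal{M}_{n\ell}(\mathcal{A})$ consists exactly of the entries $b^{i,i}_{p,p}$ for $1 \le i \le n$ and $1 \le p \le \ell$. Consequently
\[
\left(\tfrac{1}{\ell}\tau_\ell\right)_{\!n}(B) \;=\; \tfrac{1}{\ell}\sum_{i=1}^n \sum_{p=1}^\ell \tau(b^{i,i}_{p,p}) \;=\; \tfrac{1}{\ell}\,\tau_{n\ell}(\widetilde{B}),
\]
and this identity passes to projections in the respective von Neumann algebra completions.

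Finally, I would apply the Atiyah Property hypothesis: since $\widetilde{A} \in \mathcal{M}_{m\ell, n\ell}(\mathcal{A})$, the kernel projection $P$ of $L_{\widetilde{A}}$ satisfies $\tau_{n\ell}(P) \in \Gamma$. Combining with the previous display gives $(\tfrac{1}{\ell}\tau_\ell)_n(\ker L_A) = \tfrac{1}{\ell}\tau_{n\ell}(P) \in \tfrac{1}{\ell}\Gamma$, which is exactly the Atiyah Property for $(\mathcal{M}_\ell(\mathcal{A}), \tfrac{1}{\ell}\tau_\ell)$ with group $\tfrac{1}{\ell}\Gamma$.

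There is no genuine obstacle here: the entire content of the lemma is the bookkeeping involved in the identification $\mathcal{M}_n(\mathcal{M}_\ell(\mathcal{A})) \cong \mathcal{M}_{n\ell}(\mathcal{A})$ together with the compatibility of the natural (unnormalized) traces. The only minor point requiring care is checking that, under this identification, the induced left-multiplication operator and its kernel projection literally correspond on the nose, which is immediate from writing out the block decomposition.
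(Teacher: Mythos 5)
Your argument is correct and is exactly the bookkeeping the paper has in mind: the paper offers no proof at all, simply declaring the lemma ``trivial'' after noting that $\frac{1}{\ell}\Gamma$ is again an additive subgroup containing $\mathbb{Z}$. Your explicit verification via the identification $\mathcal{M}_n(\mathcal{M}_\ell(\mathcal{A}))\cong\mathcal{M}_{n\ell}(\mathcal{A})$ and the compatibility of the unnormalized traces is precisely the intended reasoning.
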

\begin{thm}
\label{productofatomicmeasureshasatiyah}
Let $n \in \mathbb{N}$ and let $\{\mu_j\}^n_{j=1}$ be compactly supported probability measures on $\mathbb{C}$.   Let $\mu$ be the product measure of $\{\mu_j\}^n_{j=1}$ and let $(\mathcal{A},\tau)$ be the tracial $*$-algebra generated by multiplication by the coordinate functions $\{x_j\}^n_{j=1}$ with trace 
\[
\tau(M_f) = \int_{\mathbb{C}^n} f \, d\mu.
\]
 Suppose for each $j \in \{1,\ldots, n\}$ there exists a $d_j \in \mathbb{N}$ such that the atoms of $\mu_j$ have measures contained in $\frac{1}{d_j}\mathbb{Z}$.  If $d := \prod^n_{j=1} d_j$, then $(\mathcal{A}, \tau)$ has the Atiyah Property with group $\frac{1}{d}\mathbb{Z}$.
\end{thm}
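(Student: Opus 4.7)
The plan is to reduce to Lemma \ref{productofnonatomicmeasuressatsifiesatiyah} by decomposing each $\mu_j$ into its continuous and atomic parts, expanding the product, and treating atomic coordinates as specializations of the matrix-valued polynomial. Concretely, write
\[
\mu_j = \nu_j + \sigma_j, \qquad \sigma_j = \sum_{t \in F_j} \alpha_{j,t}\delta_t,
\]
where $\nu_j$ is non-atomic and $F_j$ is the set of atoms of $\mu_j$. By hypothesis $\alpha_{j,t} \in \frac{1}{d_j}\mathbb{Z}$; combined with $\mu_j(\mathbb{C})=1$ this forces $|F_j|\le d_j$ and $c_j := \nu_j(\mathbb{C}) = 1 - \sum_{t}\alpha_{j,t} \in \frac{1}{d_j}\mathbb{Z}\cap[0,1]$. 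When $c_j>0$, set $\tilde{\nu}_j := c_j^{-1}\nu_j$, a non-atomic compactly supported probability measure on $\mathbb{C}$.

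As in Lemma \ref{productofnonatomicmeasuressatsifiesatiyah}, for $[A_{i,j}]\in\mathcal{M}_\ell(\mathcal{A})$ with image projection $P$ we have
\[
\tau_\ell(P) = \int_{\mathbb{C}^n}\mathrm{rank}\bigl([A_{i,j}(t_1,\dots,t_n)]\bigr)\,d\mu(t_1,\dots,t_n).
\]
Expanding $\mu=\prod_j(\nu_j+\sigma_j)$ gives the finite decomposition
\[
\tau_\ell(P) = \sum_{S\subseteq\{1,\dots,n\}}\;\sum_{(t_j)_{j\in S}\in\prod_{j\in S}F_j}\; \Bigl(\prod_{j\in S}\alpha_{j,t_j}\Bigr)\int_{\mathbb{C}^{S^c}}\mathrm{rank}\bigl([A_{i,j}(t_1,\dots,t_n)]\bigr)\,d\!\!\!\prod_{j\notin S}\!\!\nu_j.
\]
Fix $S$ and atoms $(t_j)_{j\in S}$. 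Then $[A_{i,j}(t_1,\dots,t_n)]$ is a matrix of polynomials in the free variables $\{t_j\}_{j\notin S}$ with complex coefficients. The key claim from the proof of Lemma \ref{productofnonatomicmeasuressatsifiesatiyah} — that the zero set of any polynomial has either zero or full measure with respect to a product of non-atomic probability measures — applies to $\prod_{j\notin S,\,c_j>0}\tilde{\nu}_j$. Hence every subdeterminant either vanishes identically or is nonzero almost everywhere, so the integer rank function is $\prod_{j\notin S,\,c_j>0}\tilde{\nu}_j$-almost everywhere equal to a constant $r_{S,(t_j)}\in\{0,1,\dots,\ell\}$. (If any $c_j = 0$ for $j\notin S$, the integrand is integrated against a zero measure and we may take $r_{S,(t_j)}$ to be anything.) Consequently
\[
\int_{\mathbb{C}^{S^c}}\mathrm{rank}\bigl([A_{i,j}(t_1,\dots,t_n)]\bigr)\,d\!\!\!\prod_{j\notin S}\!\!\nu_j = \Bigl(\prod_{j\notin S}c_j\Bigr)\,r_{S,(t_j)}.
\]

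Substituting back,
\[
\tau_\ell(P) = \sum_{S}\sum_{(t_j)_{j\in S}}\Bigl(\prod_{j\in S}\alpha_{j,t_j}\Bigr)\Bigl(\prod_{j\notin S}c_j\Bigr)\,r_{S,(t_j)},
\]
a finite sum because each $F_j$ is finite. Since $\alpha_{j,t_j},c_j\in\frac{1}{d_j}\mathbb{Z}$, each coefficient lies in $\prod_j\frac{1}{d_j}\mathbb{Z}=\frac{1}{d}\mathbb{Z}$, and $r_{S,(t_j)}\in\mathbb{Z}$. Therefore $\tau_\ell(P)\in\frac{1}{d}\mathbb{Z}$, establishing the Atiyah Property with group $\frac{1}{d}\mathbb{Z}$. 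The main technical point is the almost-sure constancy of the rank, which is handled by quoting the polynomial zero-set dichotomy already proved in Lemma \ref{productofnonatomicmeasuressatsifiesatiyah}; the rest is bookkeeping enabled by the finiteness of the atom sets $F_j$.
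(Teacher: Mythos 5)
Your proof is correct, but it takes a genuinely different route from the paper's. The paper also begins by splitting each $\mu_j$ into its non-atomic part plus a finite atomic part, but it then encodes this decomposition algebraically: the polynomial algebra on $L_2(\mu_j)$ is represented as diagonal matrices in $\mathcal{M}_{d_j}(\mathcal{B}(L_2(\mu'_j)))$, with the coordinate function occupying $d_j\mu''_j(\mathbb{C})$ diagonal slots and each atom $t_k$ occupying $\alpha_k$ scalar slots; tensoring these representations embeds $(\mathcal{A},\tau)$ into $(\mathcal{M}_d(\mathcal{A}_{\mu'}),\frac{1}{d}(\tau_{\mu'})_d)$, and the conclusion follows from Lemma \ref{productofnonatomicmeasuressatsifiesatiyah} together with the matrix-amplification Lemma \ref{tensoringwithmatrixalgebranoproblem}. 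You instead stay entirely at the level of the integral $\tau_\ell(P)=\int\mathrm{rank}\,d\mu$, distribute the product measure over the $2^n$ choices of atomic versus continuous coordinates, and apply the zero-set dichotomy of Lemma \ref{productofnonatomicmeasuressatsifiesatiyah} to each term with the atomic coordinates frozen; the bookkeeping with the coefficients $\alpha_{j,t}$ and $c_j$ then gives membership in $\frac{1}{d}\mathbb{Z}$ directly. The two arguments compute the same quantity --- unwinding the paper's block-diagonal representation recovers exactly your sum over $S$ and $(t_j)_{j\in S}$ --- but yours is more elementary and self-contained (it avoids Lemma \ref{tensoringwithmatrixalgebranoproblem} and the representation-theoretic setup), while the paper's version has the advantage of producing an explicit embedding of $(\mathcal{A},\tau)$ into a matrix algebra over a ``non-atomic'' tracial $*$-algebra, a structural fact that is reusable elsewhere. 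Both proofs handle the degenerate purely atomic case ($c_j=0$) correctly, yours by observing the corresponding terms vanish, the paper's by substituting Lebesgue measure on $[0,1]$ for the missing continuous part.
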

\begin{proof}
By assumptions, for each $j \in \{1,\ldots, n\}$ we can write 
\[
\mu_j = \mu''_j + \sum_k \frac{\alpha_k}{d_j} \delta_{t_k}
\]
where $\delta_t$ represents the point-mass probability measure at $t$, the sum is finite, $\alpha_k \in \mathbb{N}$, $t_{k_1} \neq t_{k_2}$ if $k_1 \neq k_2$, and $\mu''_j$ is an non-atomic measure.  Notice $\mu''_j(\mathbb{C}) \in \frac{1}{d_j} \mathbb{Z}$.  Let $\mu'_j := \frac{1}{\mu''(\mathbb{C})} \mu''_j$ if $\mu''_j \neq 0$ and let $\mu'_j$ be the Lebesgue measure on $[0,1]$ if $\mu''_j = 0$.  Therefore the tracial $*$-algebra generated by polynomials acting on $L_2(\mu_j)$ can represented a tracial $*$-algebra of diagonal matrices in $\mathcal{M}_{d_j}(\mathcal{B}(L_2(\mu'_j))$ (with respect to the canonical normalized matrix trace) where the polynomial $x$ maps to the matrix with $x$ appearing on the diagonal $d_j\mu''_j(\mathbb{C})$ times and each $t_k$ appearing on the diagonal $\alpha_k$ times.  
\par 
Let $\mu'$ be the product measure of $\{\mu'_j\}^n_{j=1}$ and let $(\mathcal{A}_{\mu'},\tau_{\mu'})$ be the tracial $*$-algebra generated by multiplication by the coordinate functions $\{x_j\}^n_{j=1}$ with trace $\tau_\mu(M_f) = \int_{\mathbb{C}^n} f \, d\mu'$.  By taking tensor products of the tracial $*$-algebras generated by polynomials acting on $L_2(\mu_j)$, it is easily seen using the above representations that $(\mathcal{A},\tau)$ can be represented in the tracial $*$-algebra $(\mathcal{M}_{d}(\mathcal{A}_{\mu'}), \frac{1}{d} (\tau_{\mu'})_{d})$.  Since Lemma \ref{productofnonatomicmeasuressatsifiesatiyah} implies $(\mathcal{A}_{\mu'},\tau_{\mu'})$ has the Strong Atiyah Property, Lemma \ref{tensoringwithmatrixalgebranoproblem} implies $(\mathcal{M}_{d}(\mathcal{A}_{\mu'}), \frac{1}{d} (\tau_{\mu'})_{d})$ has the Atiyah Property with group $\frac{1}{d} \mathbb{Z}$ completing the proof.
\end{proof}

\section{Atiyah Property for Freely Independent Random Variables}
\label{sec:atiyahconjecturefornoncommutativerandomvariables}

The goal of this section is to use the Atiyah Property for tracial $*$-algebras to gain information about the distributions of matricial polynomials of freely independent random variables.  In particular, Theorem \ref{freeproductwithfreegrouptensorwithotherhasatiyah} will enable the extensions of the results from Section \ref{sec:defnandexam} to the non-commutative setting as seen in Theorem \ref{mainapplicationofresult} thus completing the proof of part (1) of Theorem \ref{mainresultinintro}.  The proof of Theorem \ref{freeproductwithfreegrouptensorwithotherhasatiyah}, which is based on the proof of \cite{Sc}*{Proposition 3} (or the updated version \cite{Sc2}*{Proposition 6.1}), will be postponed until the next section in order to focus on the applications of Theorem \ref{freeproductwithfreegrouptensorwithotherhasatiyah}.
\par
Recall that given unital $*$-algebras $\mathcal{A}_i \subseteq \mathcal{B}(\mathcal{H}_i)$ with vector states $\tau_i$ that are tracial on $\mathcal{A}_i$, we can consider the $*$-subalgebra $\mathcal{A}_1 \ast \mathcal{A}_2$ inside the reduced free product C$^*$-algebra $(\mathcal{B}(\mathcal{H}_1),\tau_1) \ast (\mathcal{B}(\mathcal{H}_2),\tau_2)$ generated by $\mathcal{A}_1$ and $\mathcal{A}_2$.  The canonical vector state $\tau_1 \ast \tau_2$ is then a tracial state on $\mathcal{A}_1 \ast \mathcal{A}_2$ (see \cite{VDN} or \cite{HP}).  Similarly we can consider the $*$-subalgebra $\mathcal{A}_1 \odot \mathcal{A}_2$ inside the C$^*$-algebra $\mathcal{B}(\mathcal{H}_1 \otimes \mathcal{H}_2)$ generated by $T \otimes I_{\mathcal{H}_2}$ and $I_{\mathcal{H}_1} \otimes S$ for all $T \in \mathcal{A}_1$ and $S \in \mathcal{A}_2$.  With this notation, it is easy to state the following technical result.
\begin{thm}
\label{freeproductwithfreegrouptensorwithotherhasatiyah}
Let $n \in \mathbb{N}$, let $\mathbb{F}_n$ be the free group on $n$ generators, let $\mathbb{CF}_n$ be the group $*$-algebra equipped with the C$^*$-norm defined by the left regular representation, and let $\tau_{\mathbb{F}_n}$ be the canonical trace on $L(\mathbb{F}_n)$.  Let $\mathcal{A}$ and $\mathcal{B}$ be $*$-subalgebras of the tracial von Neumann algebras with separable preduals $(\mathfrak{M}, \tau_\mathfrak{M})$ and $(\mathfrak{N}, \tau_\mathfrak{N})$ respectively.  Suppose that  $(\mathcal{A} \odot \mathcal{B}, \tau_\mathfrak{M} \overline{\otimes} \tau_\mathfrak{N})$ has the Atiyah Property with group $\frac{1}{d}\mathbb{Z}$ for some $d \in \mathbb{N}$.  Then $((\mathcal{A} \ast \mathbb{C}\mathbb{F}_n) \odot \mathcal{B}, (\tau_\mathfrak{M} \ast \tau_{\mathbb{F}_n}) \overline{\otimes} \tau_\mathfrak{N})$ has the Atiyah Property with group $\frac{1}{d}\mathbb{Z}$.
\end{thm}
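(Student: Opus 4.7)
The plan is to follow Schick's approach from \cite{Sc} closely, adapting the argument from groups to general tracial $*$-algebras and carrying the auxiliary tensor factor $\mathcal{B}$ throughout.

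First I would reduce to the case $n = 1$. Since $\mathbb{C}\mathbb{F}_n \cong \mathbb{C}\mathbb{F}_{n-1} \ast \mathbb{C}\mathbb{F}_1$ and the hypothesis and conclusion carry the same group $\frac{1}{d}\mathbb{Z}$, an induction on $n$ lets me focus on adjoining a single free Haar unitary. So the task becomes: given $T \in \mathcal{M}_\ell((\mathcal{A} \ast \mathbb{C}\mathbb{F}_1) \odot \mathcal{B})$, show $((\tau_\mathfrak{M} \ast \tau_{\mathbb{F}_1}) \overline{\otimes} \tau_\mathfrak{N})_\ell(\ker L_T) \in \frac{1}{d}\mathbb{Z}$.

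Next, I would approximate the Haar unitary $u$ by unitaries of finite order. Identifying $L(\mathbb{F}_1) \cong L^\infty(\mathbb{T})$ and letting $p_k^{(m)}$ be the characteristic function of $[k/m, (k+1)/m)$, set $u_m := \sum_{k=0}^{m-1} e^{2\pi i k/m} p_k^{(m)}$. Then $u_m^m = 1$, $u_m$ remains free from $\mathcal{A}$ (since $u_m \in L(\mathbb{F}_1)$), and $u_m \to u$ strongly, hence in all $\ast$-moments. Write $T_m$ for the result of substituting $u \mapsto u_m$ in $T$, so that $T_m \in \mathcal{M}_\ell((\mathcal{A} \ast \mathbb{C}[\mathbb{Z}/m\mathbb{Z}]) \odot \mathcal{B})$.

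For fixed $m$, the free product with a cyclic group of order $m$ can be realized inside a matrix algebra: there is a trace-preserving $\ast$-embedding $\iota$ of $\mathcal{A} \ast \mathbb{C}[\mathbb{Z}/m\mathbb{Z}]$ into $\mathcal{M}_m(\widetilde{\mathcal{A}})$, where $\widetilde{\mathcal{A}} \subseteq \mathfrak{M}$ is a suitable $\ast$-extension of $\mathcal{A}$ built out of $\mathcal{A}$ and the spectral projections $p_k^{(m)}$, and whose enlargement $\widetilde{\mathcal{A}} \odot \mathcal{B}$ still has the Atiyah Property with group $\frac{1}{d}\mathbb{Z}$. Applying this embedding together with Lemma \ref{tensoringwithmatrixalgebranoproblem} (and tensoring with $\mathcal{B}$) shows that the kernel trace of $T_m$ lies in $\frac{1}{md}\mathbb{Z}$.

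Finally I would take the limit $m \to \infty$. A L\"uck-style approximation theorem, combined with the strong convergence $u_m \to u$ and the trace-preservation of the constructions, shows that the kernel trace of $T_m$ converges to that of $T$. The crucial refinement, following the last steps of Schick's proof, is to establish the quantitative bound $\dim \ker \iota_\ell(T_m) = m\alpha + O(1)$ for some $\alpha \in \frac{1}{d}\mathbb{Z}$, so that the normalized kernel traces converge to an element of $\frac{1}{d}\mathbb{Z}$ rather than an arbitrary limit of points in the finer lattice $\frac{1}{md}\mathbb{Z}$. The principal obstacle is precisely this quantitative step: bounding the discrepancy between $m$ times the kernel trace of $T$ and the integer kernel dimension of $\iota_\ell(T_m)$ by a constant independent of $m$. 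In Schick's group-theoretic argument this emerges from a Novikov--Shubin-type spectral estimate; in our tracial $\ast$-algebra setting the analogous bound must be extracted from the explicit free-product word structure, together with the fact that $u_m$ differs from $u$ only in the high Fourier modes on $\mathbb{T}$.
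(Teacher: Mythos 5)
Your proposal takes a genuinely different route from the paper's (which adapts Schick's Fredholm-module/finite-propagation argument for free products: it builds a unitary $\Psi : \ell_2(\mathbb{F}_n) \ominus \mathbb{C}\delta_e \to \ell_2(\mathbb{F}_n) \otimes \mathbb{C}^n$ from the Cayley tree, compares two representations $\mathcal{L}_\pm$ that differ by a module-finite-rank perturbation, and extracts integrality by comparing traces over an $\mathfrak{M}\overline{\otimes}\mathfrak{N}$-basis). Your approach is instead an approximation argument, and as written it has two gaps that go beyond ``details to be filled in.''

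First, the matrix-algebra embedding step is not well-posed. You propose a trace-preserving embedding of $\mathcal{A} \ast \mathbb{C}[\mathbb{Z}/m\mathbb{Z}]$ into $\mathcal{M}_m(\widetilde{\mathcal{A}})$ with $\widetilde{\mathcal{A}} \subseteq \mathfrak{M}$ a $*$-extension of $\mathcal{A}$ built from $\mathcal{A}$ and the spectral projections $p_k^{(m)}$. But $p_k^{(m)}$ are spectral projections of $u \in L(\mathbb{F}_1)$, which sit freely from $\mathfrak{M}$; they cannot lie in a subalgebra of $\mathfrak{M}$. The standard compression argument gives $\mathfrak{M} \ast L(\mathbb{Z}/m\mathbb{Z}) \cong \mathcal{M}_m\bigl(e(\mathfrak{M} \ast L(\mathbb{Z}/m\mathbb{Z}))e\bigr)$ for a minimal projection $e$ of $L(\mathbb{Z}/m\mathbb{Z})$, but the corner $e(\mathfrak{M} \ast L(\mathbb{Z}/m\mathbb{Z}))e$ is itself a nontrivial free-product-type algebra (not a subalgebra of $\mathfrak{M}$), so you would need its Atiyah Property with the same group $\frac{1}{d}\mathbb{Z}$, which is essentially what you are trying to prove---the step is circular. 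Second, you acknowledge that the conclusion requires the quantitative bound $\dim\ker \iota_\ell(T_m) = m\alpha + O(1)$ with $\alpha \in \frac{1}{d}\mathbb{Z}$, because without it the approximating kernel traces only live in the shrinking lattices $\frac{1}{md}\mathbb{Z}$ and their limit can be any real number. That $O(1)$ bound is exactly the content of the theorem and is not supplied; attributing it to a ``Novikov--Shubin-type spectral estimate'' in Schick misidentifies the mechanism---Schick's treatment of free products (Proposition 3 of \cite{Sc}, which the paper follows) uses the Cayley-tree unitary and finite-propagation comparison, not a determinant-class/Novikov--Shubin estimate.
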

Clearly Theorem \ref{freeproductwithfreegrouptensorwithotherhasatiyah} implies the following two results.
\begin{cor}
\label{tensorproductwithFnonbothsides}
If $\mathcal{A}$ and $\mathcal{B}$ are as in Theorem \ref{freeproductwithfreegrouptensorwithotherhasatiyah} and $n,m \in \mathbb{N}$, then $((\mathcal{A} \ast \mathbb{C}\mathbb{F}_n) \odot (\mathcal{B} \ast \mathbb{CF}_m), (\tau_\mathfrak{M} \ast \tau_{\mathbb{F}_n}) \overline{\otimes} (\tau_\mathfrak{N} \ast\tau_{\mathbb{F}_m}))$ has the Atiyah Property with group $\frac{1}{d}\mathbb{Z}$.
\end{cor}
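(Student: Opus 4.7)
The corollary follows by applying Theorem \ref{freeproductwithfreegrouptensorwithotherhasatiyah} twice, using only the evident symmetry of the algebraic tensor product $\odot$ in its two factors.

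First, I would apply Theorem \ref{freeproductwithfreegrouptensorwithotherhasatiyah} directly with the given $\mathcal{A}, \mathcal{B}, n$ (and the ambient algebras $\mathfrak{M}, \mathfrak{N}$) to conclude that $((\mathcal{A} \ast \mathbb{C}\mathbb{F}_n) \odot \mathcal{B}, (\tau_\mathfrak{M} \ast \tau_{\mathbb{F}_n}) \overline{\otimes} \tau_\mathfrak{N})$ has the Atiyah Property with group $\frac{1}{d}\mathbb{Z}$. Here the ambient tracial von Neumann algebra for $\mathcal{A} \ast \mathbb{C}\mathbb{F}_n$ is the reduced free product $\mathfrak{M} \bar{\ast} L(\mathbb{F}_n)$, which still has separable predual (the free product of two tracial von Neumann algebras with separable preduals retains this property), so the hypotheses of Theorem \ref{freeproductwithfreegrouptensorwithotherhasatiyah} remain available for a second application.

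Next, using the canonical flip isomorphism $\mathcal{B}(\mathcal{H}_1 \otimes \mathcal{H}_2) \cong \mathcal{B}(\mathcal{H}_2 \otimes \mathcal{H}_1)$, which intertwines $T \otimes I$ with $I \otimes T$ and preserves the product trace, the statement just obtained is equivalent to the assertion that $(\mathcal{B} \odot (\mathcal{A} \ast \mathbb{C}\mathbb{F}_n), \tau_\mathfrak{N} \overline{\otimes} (\tau_\mathfrak{M} \ast \tau_{\mathbb{F}_n}))$ has the Atiyah Property with group $\frac{1}{d}\mathbb{Z}$. Now I would apply Theorem \ref{freeproductwithfreegrouptensorwithotherhasatiyah} a second time, with the roles of the two factors swapped (so $\mathcal{B}$ plays the role of $\mathcal{A}$, $\mathcal{A} \ast \mathbb{C}\mathbb{F}_n$ plays the role of $\mathcal{B}$, and $m$ plays the role of $n$). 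This yields that $((\mathcal{B} \ast \mathbb{C}\mathbb{F}_m) \odot (\mathcal{A} \ast \mathbb{C}\mathbb{F}_n), (\tau_\mathfrak{N} \ast \tau_{\mathbb{F}_m}) \overline{\otimes} (\tau_\mathfrak{M} \ast \tau_{\mathbb{F}_n}))$ has the Atiyah Property with group $\frac{1}{d}\mathbb{Z}$. Flipping the two tensor factors once more gives the desired statement.

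There is no real obstacle here; the only point that needs a brief remark is that the Atiyah Property is invariant under the flip of tensor factors (kernels of $L_A$ and of its image under the flip have the same trace, since the flip is trace-preserving). The core work was already done in Theorem \ref{freeproductwithfreegrouptensorwithotherhasatiyah}, and the corollary simply exploits that $\odot$ is symmetric in its two arguments while Theorem \ref{freeproductwithfreegrouptensorwithotherhasatiyah} only modifies one of them.
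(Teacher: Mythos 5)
Your proposal is correct and is essentially the paper's own argument: the paper likewise applies Theorem \ref{freeproductwithfreegrouptensorwithotherhasatiyah} twice, the second time with the roles of the two tensor factors exchanged (taking the new ``$\mathcal{A}$'' to be $\mathcal{B}$ and the new ``$\mathcal{B}$'' to be $\mathcal{A} \ast \mathbb{CF}_n$), relying on the symmetry of $\odot$. Your added remarks on the separable predual of the free product and the trace-preserving flip are correct details the paper leaves implicit.
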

\begin{proof}
This is a simple application of Theorem \ref{freeproductwithfreegrouptensorwithotherhasatiyah} twice using $\mathcal{A} = \mathcal{B}$ and $\mathcal{B} = \mathcal{A} \ast \mathbb{C}\mathbb{F}_n$ the second time.
\end{proof}
\begin{cor}
Let $\mathcal{A}$ be a $*$-subalgebra of a tracial von Neumann algebra with separable predual $(\mathfrak{M}, \tau_\mathfrak{M})$.  Suppose $(\mathcal{A}, \tau_\mathfrak{M})$ has the Atiyah Property with group $\frac{1}{d}\mathbb{Z}$ for some $d \in \mathbb{N}$.  Then $(\mathcal{A} \ast \mathbb{C}\mathbb{F}_n, \tau_\mathfrak{M} \ast \tau_{\mathbb{F}_n})$ has the Atiyah Property with group $\frac{1}{d}\mathbb{Z}$.
\end{cor}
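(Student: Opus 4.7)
The plan is to deduce this corollary directly from Theorem \ref{freeproductwithfreegrouptensorwithotherhasatiyah} by specializing the second algebra to a trivial case. Specifically, I would take $(\mathfrak{N}, \tau_\mathfrak{N}) = (\mathbb{C}, \mathrm{id})$ and $\mathcal{B} = \mathbb{C}$, which is a tracial von Neumann algebra with separable predual. Under this specialization, the spatial tensor product $\mathcal{A} \odot \mathbb{C}$ is canonically identified with $\mathcal{A}$ (acting on $\mathcal{H}_\mathfrak{M} \otimes \mathbb{C} \cong \mathcal{H}_\mathfrak{M}$), and the tensor product trace $\tau_\mathfrak{M} \overline{\otimes} \mathrm{id}$ is identified with $\tau_\mathfrak{M}$.

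Under this identification, the hypothesis of Theorem \ref{freeproductwithfreegrouptensorwithotherhasatiyah} — namely that $(\mathcal{A} \odot \mathcal{B}, \tau_\mathfrak{M} \overline{\otimes} \tau_\mathfrak{N})$ has the Atiyah Property with group $\frac{1}{d}\mathbb{Z}$ — reduces exactly to the hypothesis of the corollary that $(\mathcal{A}, \tau_\mathfrak{M})$ has the Atiyah Property with group $\frac{1}{d}\mathbb{Z}$. Applying the theorem then gives that $((\mathcal{A} \ast \mathbb{C}\mathbb{F}_n) \odot \mathbb{C}, (\tau_\mathfrak{M} \ast \tau_{\mathbb{F}_n}) \overline{\otimes} \mathrm{id})$ has the Atiyah Property with group $\frac{1}{d}\mathbb{Z}$, and reversing the canonical identification yields the conclusion for $(\mathcal{A} \ast \mathbb{C}\mathbb{F}_n, \tau_\mathfrak{M} \ast \tau_{\mathbb{F}_n})$.

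There is essentially no obstacle here; the only thing worth verifying carefully is that the identification $\mathcal{A} \odot \mathbb{C} \cong \mathcal{A}$ respects the notion of matrix kernels, i.e., that for $A \in \mathcal{M}_\ell(\mathcal{A})$ viewed inside $\mathcal{M}_\ell(\mathcal{A} \odot \mathbb{C})$, the kernel projection of $L_A$ on $\mathcal{H}_\mathfrak{M}^{\oplus \ell} \otimes \mathbb{C}$ coincides with the kernel projection of $L_A$ on $\mathcal{H}_\mathfrak{M}^{\oplus \ell}$ and has the same trace value. This is immediate from the tensor-product identification. Thus the corollary follows from Theorem \ref{freeproductwithfreegrouptensorwithotherhasatiyah} with $\mathcal{B} = \mathbb{C}$.
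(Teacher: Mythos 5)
Your proposal is correct and is exactly the paper's proof: the authors likewise just take $\mathcal{B} = \mathbb{C}$ in Theorem \ref{freeproductwithfreegrouptensorwithotherhasatiyah}. The extra care you take with the identification $\mathcal{A} \odot \mathbb{C} \cong \mathcal{A}$ is sound but the paper treats it as immediate.
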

\begin{proof}
Take $\mathcal{B} = \mathbb{C}$ in Theorem \ref{freeproductwithfreegrouptensorwithotherhasatiyah}.
\end{proof}
Using Theorem \ref{freeproductwithfreegrouptensorwithotherhasatiyah} along with the examples of Section \ref{sec:defnandexam}, we obtain the following result which provides important information about the spectral distributions of matricial polynomials of normal, freely independent random variables.
\begin{thm}
\label{mainapplicationofresult}
Let $n \in \mathbb{N}$ and let $X_1, \ldots, X_n$ be normal, freely independent random variables with probability measures $\mu_j$ as distribution respectively.  Suppose for each $j \in \{1,\ldots, n\}$ there exists a $d_j \in \mathbb{N}$ such that the atoms of $\mu_j$ have measures contained in $\frac{1}{d_j}\mathbb{Z}$.  If $\mathcal{A}$ is the unital $*$-algebra generated by $X_1, \ldots, X_n$ (obtained by taking a reduced free product of tracial $*$-algebras), $\tau$ is the canonical trace on $\mathcal{A}$, and $d := \prod^n_{j=1} d_j$, then $(\mathcal{A}, \tau)$ has the Atiyah Property with group $\frac{1}{d}\mathbb{Z}$.
\par
Furthermore, if $[p_{i,j}]$ is an $\ell \times \ell$ matrix whose entries are non-commutative polynomials in $n$ variables and their adjoints such that $[p_{i,j}(X_1, \ldots, X_n)]$ is normal, then the measure of any atom of the spectral distribution of $[p_{i,j}(X_1, \ldots, X_n)]$ with respect to the normalized trace $\frac{1}{\ell}\tau_\ell$ is in $\frac{1}{d\ell}\mathbb{Z}$.
\end{thm}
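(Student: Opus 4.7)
The plan is to realize $(\mathcal{A},\tau)$ as a trace-preserving $*$-subalgebra of a larger tracial $*$-algebra whose Atiyah Property is already in hand, and then read off the atoms of a matricial polynomial as kernel traces. The larger algebra is built by classical tensor-product bookkeeping together with one free product with a free group, using Theorems \ref{productofatomicmeasureshasatiyah} and \ref{freeproductwithfreegrouptensorwithotherhasatiyah}.

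First, let $\mathcal{B}_j$ denote the unital tracial $*$-algebra generated by $X_j$; by normality of $X_j$, functional calculus identifies $\mathcal{B}_j$ with polynomial multiplication on $L_2(\mu_j)$. The tensor product $\mathcal{B}_1 \odot \cdots \odot \mathcal{B}_n$ is precisely the algebra of Theorem \ref{productofatomicmeasureshasatiyah}, so it has the Atiyah Property with group $\frac{1}{d}\mathbb{Z}$. Applying Theorem \ref{freeproductwithfreegrouptensorwithotherhasatiyah} with $\mathcal{B} = \mathbb{C}$ shows that
\[
\mathcal{C} := (\mathcal{B}_1 \odot \cdots \odot \mathcal{B}_n) \ast \mathbb{C}\mathbb{F}_n
\]
inherits the same Atiyah Property with group $\frac{1}{d}\mathbb{Z}$.

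Next I embed $(\mathcal{A},\tau)$ trace-preservingly into $\mathcal{C}$. Let $u_1,\ldots,u_n$ be the canonical Haar unitary generators of $\mathbb{C}\mathbb{F}_n$, and let $\tilde{X}_j$ denote the image of $X_j$ in the $j$-th tensor slot of $\mathcal{B}_1 \odot \cdots \odot \mathcal{B}_n$. Setting $Y_j := u_j \tilde{X}_j u_j^*$, the standard free-probability fact that conjugation by free Haar unitaries produces free independence gives that $Y_1,\ldots,Y_n$ are freely independent in $\mathcal{C}$; unitary conjugation preserves the distribution of each normal $\tilde{X}_j$, so $Y_j$ has distribution $\mu_j$. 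Hence the unital $*$-subalgebra of $\mathcal{C}$ generated by $Y_1,\ldots,Y_n$ is $*$-isomorphic to $(\mathcal{A},\tau)$ via a trace-preserving map sending $X_j \mapsto Y_j$. The Atiyah Property (with any group $\Gamma$) descends along any trace-preserving inclusion of $*$-subalgebras of a tracial von Neumann algebra, because for $A \in \mathcal{M}_{m,n}(\mathcal{A})$ the kernel $\ker(L_A)$ is an intrinsic Hilbert-space object and the ambient trace is unchanged on the corresponding projection. This gives the first assertion.

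For the matricial statement, if $T = [p_{i,j}(X_1,\ldots,X_n)] \in \mathcal{M}_\ell(\mathcal{A})$ is normal and $\lambda \in \mathbb{C}$, then the spectral projection of $T$ at $\{\lambda\}$ is precisely the projection onto $\ker(L_{T - \lambda I_\ell})$, and $T - \lambda I_\ell$ lies in $\mathcal{M}_\ell(\mathcal{A})$. The first part of the theorem therefore forces this projection to have $\tau_\ell$-trace in $\frac{1}{d}\mathbb{Z}$, so its measure relative to the normalized trace $\frac{1}{\ell}\tau_\ell$ lies in $\frac{1}{d\ell}\mathbb{Z}$. The main obstacle I anticipate is the free-independence-via-conjugation input used in the embedding step: while this is a standard consequence of Voiculescu's computations on alternating words containing free Haar unitaries, it needs to be invoked carefully to confirm that the trace on the subalgebra of $\mathcal{C}$ generated by the $Y_j$'s agrees on the nose with the free product trace on $\mathcal{A}$.
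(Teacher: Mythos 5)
Your proposal is correct and follows essentially the same route as the paper's proof: realize the tensor product of the single-variable algebras via Theorem \ref{productofatomicmeasureshasatiyah}, pass to a free product with a free group algebra via Theorem \ref{freeproductwithfreegrouptensorwithotherhasatiyah}, and recover $(\mathcal{A},\tau)$ by conjugating the commuting copies of the $X_j$ by free Haar unitaries; the only (immaterial) difference is that the paper conjugates by powers of a single Haar unitary generating $\mathbb{C}\mathbb{Z}$ rather than by $n$ distinct free generators. The matricial step (translating an atom to an eigenvalue-zero kernel projection) is also identical to the paper's.
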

\begin{proof}
Let $\mu$ be the product measure of $\{\mu_j\}^n_{j=1}$ and let $(\mathcal{A}_0, \tau_0)$ be the tracial $*$-algebra generated by multiplication by the coordinate functions $\{x_j\}^n_{j=1}$ on $L_2(\mu)$ with trace $\tau_0(M_f) = \int_{\mathbb{C}^n} f \, d\mu$.  Clearly each $X_j$ has a representation in $\mathcal{A}_0$ as multiplication by the coordinate function $x_j$ so we will view $X_j \in \mathcal{A}_0$ for all $j \in \{1,\ldots, n\}$.   Let $U := \lambda(1)$ be the canonical generating unitary operator for $L(\mathbb{Z})$.  Then it is easy to see that $X_1$, $UX_2U^*$, $\ldots$, $U^nX_n(U^n)^*$ are freely independent in $\mathcal{A}_0 \ast \mathbb{CZ}$ with respect to the trace $\tau_0 \ast \tau_\mathbb{Z}$.  However, since $(\mathcal{A}_0, \tau_0)$ has the Atiyah Property with group $\frac{1}{d}\mathbb{Z}$ by Theorem \ref{productofatomicmeasureshasatiyah}, $(\mathcal{A}_0 \ast \mathbb{CZ}, \tau_0 \ast \tau_{\mathbb{Z}})$ has the Atiyah Property with group $\frac{1}{d} \mathbb{Z}$ by Theorem \ref{freeproductwithfreegrouptensorwithotherhasatiyah}.  Hence $(\mathcal{A}, \tau)$ has the Atiyah Property with group $\frac{1}{d}\mathbb{Z}$ by taking the canonical isomorphism of tracial $*$-algebras.
\par
Next suppose that $[p_{i,j}]$ is an $\ell \times \ell$ matrix whose entries are non-commutative polynomials in $n$ variables and their adjoints such that $[p_{i,j}(X_1, \ldots, X_n)]$ is normal and the spectral distribution of $[p_{i,j}(X_1, \ldots, X_n)]$ has an atom.  By translation we may assume that this atom occurs at zero and thus corresponds to the kernel projection of $[p_{i,j}(X_1, \ldots, X_n)]$.  Since $(\mathcal{A}, \tau)$ has the Atiyah Property with group $\frac{1}{d}\mathbb{Z}$ we obtain that the measure of the atom is in $\frac{1}{d\ell}\mathbb{Z}$.
\end{proof}
As an application of the above result, we recall that Voiculescu developed in \cite{Voi} the notion of the additive free product of measures in which if $\{X_j\}_{j=1}^n$ are self-adjoint, freely independent random variables with probability measures $\mu_j$ as distribution respectively, then the additive free product measure $\mu := \mu_1 \boxplus \cdots \boxplus \mu_n$ is the distribution of $X_1 + \cdots + X_n$ in the reduced free product C$^*$-algebra.  Hence Theorem \ref{mainapplicationofresult} implies the following specific case of \cite{BV}*{Theorem 7.4}.
\begin{cor}[see \cite{BV}*{Theorem 7.4}]
If $n \in \mathbb{N}$ and $\{\mu_j\}^n_{j=1}$ are non-atomic, compactly supported probability measures on $\mathbb{R}$, then $\mu_1 \boxplus \cdots \boxplus \mu_n$ has no atoms.
\end{cor}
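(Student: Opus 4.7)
The plan is to apply Theorem \ref{mainapplicationofresult} directly to the polynomial $p(x_1, \ldots, x_n) = x_1 + \cdots + x_n$ viewed as a $1 \times 1$ ``matrix'' with $\ell = 1$. First I would let $X_1, \ldots, X_n$ be self-adjoint, freely independent random variables with distributions $\mu_1, \ldots, \mu_n$ sitting inside a tracial von Neumann algebra (e.g.\ the reduced free product of the $L_\infty(\mu_j)$). Since each $\mu_j$ is non-atomic, the hypothesis of Theorem \ref{mainapplicationofresult} is satisfied vacuously with $d_j = 1$, hence $d := \prod_j d_j = 1$. The element $X_1 + \cdots + X_n$ is self-adjoint, and by the very definition of free additive convolution its spectral distribution is $\mu_1 \boxplus \cdots \boxplus \mu_n$.

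Next I would invoke Theorem \ref{mainapplicationofresult} to conclude that every atom of $\mu_1 \boxplus \cdots \boxplus \mu_n$ has mass in $\frac{1}{d\ell}\mathbb{Z} = \mathbb{Z}$. Since the free convolution is a probability measure, any atom mass lies in $[0,1] \cap \mathbb{Z} = \{0, 1\}$, so the only remaining possibility to rule out is that $\mu_1 \boxplus \cdots \boxplus \mu_n = \delta_a$ for some $a \in \mathbb{R}$.

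The potentially delicate step is excluding this degenerate possibility, but it reduces to an elementary freeness calculation. For $n = 1$ the claim is trivial since $\mu_1 \boxplus \cdots \boxplus \mu_n = \mu_1$ by convention. For $n \geq 2$, set $Y := X_2 + \cdots + X_n$, which is freely independent from $X_1$. If $X_1 + Y = a$, then $Y = a - X_1$, and computing $\tau(X_1 Y)$ in two ways --- directly and via freeness ($\tau(X_1 Y) = \tau(X_1)\tau(Y)$) --- yields
\[
\tau(X_1^2) - a\tau(X_1) = \tau(X_1)\bigl(a - \tau(X_1)\bigr) - \tau(X_1)^2 \cdot 0,
\]
which after simplification gives $\tau(X_1^2) = \tau(X_1)^2$, i.e.\ $X_1$ has zero variance. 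Thus $\mu_1$ would be a Dirac mass, contradicting non-atomicity.

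I do not expect any real obstacle here: the statement is essentially a one-line corollary of the preceding theorem, and the only piece not contained in Theorem \ref{mainapplicationofresult} is the observation that a full-mass atom in the sum forces (via freeness) a full-mass atom in at least one of the summands.
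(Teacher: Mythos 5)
Your proposal is correct and follows essentially the same route as the paper: apply Theorem \ref{mainapplicationofresult} with $\ell = d = 1$ to force every atom of $\mu_1 \boxplus \cdots \boxplus \mu_n$ to have integer mass, then rule out the remaining point-mass case using freeness. Your variance computation is just a more explicit version of the paper's terse remark that $X_1 + \cdots + X_n = \alpha I$ would contradict free independence (there is a sign slip in your displayed equation, since $\tau(X_1 Y) = a\tau(X_1) - \tau(X_1^2)$, but the corrected computation still yields $\tau(X_1^2) = \tau(X_1)^2$ and hence the desired contradiction).
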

\begin{proof}
Since each $\mu_j$ contains no atoms, we can apply Theorem \ref{mainapplicationofresult} to conclude that $\mu := \mu_1 \boxplus \cdots \boxplus \mu_n$ may only have atoms in $\mathbb{Z}$.  Since $\mu$ is a probability measure, if $\mu$ has an atom, then $\mu$ must be a point-mass measure which would imply that $X_1 + \cdots + X_n = \alpha I$ for some $\alpha \in \mathbb{R}$ contradicting the fact that $X_1$, $\ldots$, $X_n$ are freely independent.
\end{proof}
To complete this section, we can extend Theorem \ref{mainapplicationofresult} to tensor products of tracial $*$-algebras generated by self-adjoint, freely independent random variables.
\begin{cor}
\label{tensorproductcorollary}
Let $n,m \in \mathbb{N}$ and let $X_1, \ldots, X_n$ and $Y_1, \ldots, Y_m$ be collections of normal, freely independent random variables with probability measures $\mu_j$ and $\nu_k$ as distribution respectively.  Let $(\mathcal{A}, \tau_\mathcal{A})$ and $(\mathcal{B}, \tau_\mathcal{B})$ be the tracial $*$-algebras generated by the reduced free products of $\{X_1, \ldots, X_n\}$ and $\{Y_1, \ldots, Y_m\}$ respectively.  Suppose for each $j \in \{1,\ldots, n\}$ and $k \in \{1,\ldots, m\}$ there exists a $d_j, d'_k \in \mathbb{N}$ such that the atoms of $\mu_j$ and $\nu_k$ have measures contained in $\frac{1}{d_j}\mathbb{Z}$ and $\frac{1}{d'_k}\mathbb{Z}$ respectively.  If 
\[
d := \prod^n_{j=1} d_j \cdot \prod^m_{k=1} d'_k,
\]
then $(\mathcal{A} \otimes \mathcal{B}, \tau_\mathcal{A} \overline{\otimes} \tau_\mathcal{B})$ has the Atiyah Property with group $\frac{1}{d}\mathbb{Z}$.
\end{cor}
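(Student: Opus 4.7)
The plan is to mirror the strategy used in the proof of Theorem \ref{mainapplicationofresult}, but apply it to both sides of the tensor product simultaneously, replacing Theorem \ref{freeproductwithfreegrouptensorwithotherhasatiyah} by Corollary \ref{tensorproductwithFnonbothsides}. Concretely, let $\mu := \mu_1 \times \cdots \times \mu_n$ and $\nu := \nu_1 \times \cdots \times \nu_m$ be the product measures. Let $(\mathcal{A}_0, \tau_\mu)$ be the tracial $*$-algebra of multiplication by the coordinate polynomials $x_1, \ldots, x_n$ on $L_2(\mu)$, and $(\mathcal{B}_0, \tau_\nu)$ the analogous object for $y_1, \ldots, y_m$ on $L_2(\nu)$. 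Under the canonical identification $L_2(\mu) \otimes L_2(\nu) \cong L_2(\mu \times \nu)$, the algebra $\mathcal{A}_0 \odot \mathcal{B}_0$ is precisely the tracial $*$-algebra of multiplication by polynomials in $n+m$ coordinates on $\mathbb{C}^{n+m}$ for the full product measure $\mu \times \nu$. Applying Theorem \ref{productofatomicmeasureshasatiyah} to the combined list $(\mu_1, \ldots, \mu_n, \nu_1, \ldots, \nu_m)$ shows that $(\mathcal{A}_0 \odot \mathcal{B}_0, \tau_\mu \overline{\otimes} \tau_\nu)$ has the Atiyah Property with group $\frac{1}{d}\mathbb{Z}$.

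Now invoke Corollary \ref{tensorproductwithFnonbothsides} with these $\mathcal{A}_0, \mathcal{B}_0$ in place of $\mathcal{A}, \mathcal{B}$ to obtain that
\[
\bigl((\mathcal{A}_0 \ast \mathbb{C}\mathbb{F}_n) \odot (\mathcal{B}_0 \ast \mathbb{C}\mathbb{F}_m),\ (\tau_\mu \ast \tau_{\mathbb{F}_n}) \overline{\otimes} (\tau_\nu \ast \tau_{\mathbb{F}_m})\bigr)
\]
has the Atiyah Property with group $\frac{1}{d}\mathbb{Z}$.

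The remaining task is to embed $\mathcal{A} \otimes \mathcal{B}$ trace-preservingly into this algebra. Following the argument already used in Theorem \ref{mainapplicationofresult}, fix a generator $U \in \mathbb{C}\mathbb{F}_n$ of a copy of $\mathbb{C}\mathbb{Z}$, and set $\widetilde{X}_j := U^{j-1} x_j U^{-(j-1)}$ inside $\mathcal{A}_0 \ast \mathbb{C}\mathbb{F}_n$. A standard freeness calculation shows that $\widetilde{X}_1, \ldots, \widetilde{X}_n$ are normal, freely independent, and have marginal distributions $\mu_1, \ldots, \mu_n$, so the $*$-algebra they generate is $*$-isomorphic (in a trace-preserving way) to $\mathcal{A}$. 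Performing the symmetric construction with a generator $V \in \mathbb{C}\mathbb{F}_m$ inside $\mathcal{B}_0 \ast \mathbb{C}\mathbb{F}_m$ yields a trace-preserving copy of $\mathcal{B}$. Taking the algebraic tensor product exhibits $\mathcal{A} \otimes \mathcal{B}$ as a $*$-subalgebra of $(\mathcal{A}_0 \ast \mathbb{C}\mathbb{F}_n) \odot (\mathcal{B}_0 \ast \mathbb{C}\mathbb{F}_m)$ on which the ambient trace restricts to $\tau_\mathcal{A} \overline{\otimes} \tau_\mathcal{B}$.

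Finally, the Atiyah Property descends to $*$-subalgebras on which the trace is the restriction of the ambient trace, since the kernel $\ker L_A$ of a matrix $A$ over the subalgebra is computed in the ambient Hilbert space and depends only on $A$, while the trace of the associated projection depends only on the (unchanged) trace. Hence $(\mathcal{A} \otimes \mathcal{B}, \tau_\mathcal{A} \overline{\otimes} \tau_\mathcal{B})$ has the Atiyah Property with group $\frac{1}{d}\mathbb{Z}$. The only nontrivial point is the verification that the conjugation trick produces the desired freely independent copies on each side, but this is exactly the freeness fact already exploited in the proof of Theorem \ref{mainapplicationofresult}, so no new work is required.
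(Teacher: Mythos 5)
Your proof is correct and follows essentially the same route as the paper: the paper's own argument likewise forms the commutative models $\mathcal{A}_0$ and $\mathcal{B}_0$, applies Theorem \ref{productofatomicmeasureshasatiyah} to $\mathcal{A}_0 \odot \mathcal{B}_0$ viewed as the multiplication algebra for the full product measure, and then invokes Corollary \ref{tensorproductwithFnonbothsides} together with the unitary-conjugation freeness trick from the proof of Theorem \ref{mainapplicationofresult}. Your write-up simply spells out the details the paper leaves to that earlier proof.
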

\begin{proof}
 Let $\mu$ be the product measure of $\{\mu_j\}^n_{j=1}$ and let $\nu$ be the product measure of $\{\nu_k\}^m_{k=1}$.  Let $(\mathcal{A}_0, \tau_{\mathcal{A},0})$ be the tracial $*$-algebra generated by multiplication by the coordinate functions $\{x_j\}^n_{j=1}$ on $L_2(\mu)$ with trace $\tau_{\mathcal{A},0}(M_f) = \int_{\mathbb{C}^n} f \, d\mu$ and let  $(\mathcal{B}_0, \tau_{\mathcal{B},0})$ be the tracial $*$-algebra generated by multiplication by the coordinate functions $\{y_k\}^m_{k=1}$ on $L_2(\nu)$ with trace $\tau_{\mathcal{B},0}(M_f) = \int_{\mathbb{C}^m} f \, d\nu$.  Therefore $(\mathcal{A}_0 \odot \mathcal{B}_0, \tau_{\mathcal{A},0} \overline{\otimes} \tau_{\mathcal{B},0})$ has the Atiyah Property with group $\frac{1}{d}\mathbb{Z}$ by Theorem \ref{productofatomicmeasureshasatiyah}.  The remainder of the proof follows the proof of Theorem \ref{mainapplicationofresult} by an application of Corollary \ref{tensorproductwithFnonbothsides}.
\end{proof}
Notice that Corollary \ref{tensorproductcorollary} has the following interesting application. For any $n,m \in \mathbb{N}$ let $P_1, \ldots, P_m \in \mathcal{A} := \textrm{Alg}(S_1,\ldots,S_n)$ be polynomials in $n$ free semicircular variables $S_1,\ldots,S_n$ and let $\partial_j$ be the non-commutative difference quotient derivations (see \cite{Vo5}).  Let $JP := [\partial_i P_j]_{ij}$ which is an $n\times m$ matrix with entries in $ \mathcal{A}\otimes  \mathcal{A}$.  The matrix $JP$ is the non-commutative Jacobian of $P:=(P_1,\ldots,P_m)$.  We define the rank of $JP$ to be the (non-normalized) trace of its image projection in $\mathcal{M}_n(W^*( \mathcal{A}\otimes  \mathcal{A}))$.  
\begin{cor}
With the above notation, $\textrm{rank}(JP) \in \{0, 1,\ldots,\min(m,n)\}$. In particular, if $\{P_j\}^m_{j=1}$ are not all constant, then $\textrm{rank}(JP)\geq 1$.
\end{cor}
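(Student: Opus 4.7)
The plan is to invoke Corollary \ref{tensorproductcorollary} to supply the Strong Atiyah Property for $(\mathcal{A}\otimes\mathcal{A},\tau_\mathcal{A}\overline{\otimes}\tau_\mathcal{A})$, and then read off integrality, boundedness and non-vanishing of $\textrm{rank}(JP)$ from the Atiyah Property applied to the non-commutative Jacobian, viewed as an $n\times m$ matrix with entries in $\mathcal{A}\otimes \mathcal{A}$.

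Concretely, I first apply Corollary \ref{tensorproductcorollary} with both freely independent families taken to be the free semicircular system $S_1,\ldots,S_n$. Since the semicircle distribution is non-atomic, one may choose $d_j=d'_k=1$, so the resulting $d$ is $1$; the corollary then guarantees that $(\mathcal{A}\otimes\mathcal{A},\tau_\mathcal{A}\overline{\otimes}\tau_\mathcal{A})$ has the Strong Atiyah Property. Next, I write $\mathcal{H}=L^2(W^*(\mathcal{A}\otimes \mathcal{A}),\tau_\mathcal{A}\overline{\otimes}\tau_\mathcal{A})$ and view $L_{JP}\colon \mathcal{H}^{m}\to \mathcal{H}^{n}$. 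Denoting the unnormalized trace on $\mathcal{M}_{k}(W^*(\mathcal{A}\otimes\mathcal{A}))$ by $\textrm{Tr}_k=(\tau_\mathcal{A}\overline{\otimes}\tau_\mathcal{A})_k$ and polar-decomposing $L_{JP}=V|L_{JP}|$ inside the ambient finite von Neumann algebra $\mathcal{M}_{n+m}(W^*(\mathcal{A}\otimes \mathcal{A}))$, the traciality of $\textrm{Tr}_{n+m}$ yields $\textrm{Tr}_n(VV^*)=\textrm{Tr}_m(V^*V)$, so
\[
\textrm{rank}(JP)\;=\;\textrm{Tr}_n\bigl(\text{proj.\ onto }\overline{\textrm{Im}\,L_{JP}}\bigr)\;=\;m-\textrm{Tr}_m\bigl(\text{proj.\ onto }\ker L_{JP}\bigr).
\]
The Strong Atiyah Property gives $\textrm{Tr}_m(\ker L_{JP})\in\mathbb{Z}$, whence $\textrm{rank}(JP)\in\mathbb{Z}$. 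The bounds $0\le\textrm{rank}(JP)\le\min(m,n)$ are then immediate: it is a trace of a projection, bounded by $\textrm{Tr}_n(I_{\mathcal{H}^n})=n$ because the image projection lies in $\mathcal{M}_n$, and bounded by $m$ via the displayed equality.

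For the final claim, suppose some $P_j$ is non-constant. Since $W^*(S_1,\ldots,S_n)\cong L(\mathbb{F}_n)$ is a II$_1$ factor with faithful trace, the canonical map from the free algebra $\mathbb{C}\langle x_1,\ldots,x_n\rangle$ onto $\mathcal{A}$ is an isomorphism, so $P_j$ is non-constant as a non-commutative polynomial. The defining properties of the non-commutative difference quotients (Leibniz rule and $\partial_i x_k=\delta_{ik}1\otimes 1$) force at least one $\partial_i P_j$ to be non-zero in $\mathcal{A}\otimes\mathcal{A}$, hence $L_{JP}\ne 0$ and its image projection is a non-zero projection in the tracial von Neumann algebra $\mathcal{M}_n(W^*(\mathcal{A}\otimes\mathcal{A}))$; by faithfulness of the trace it has strictly positive value, and being a positive integer it is at least $1$. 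The only non-routine point in the argument is the polar-decomposition identification of image and co-image traces for the rectangular matrix $JP$, which is precisely what lets us convert the Atiyah Property (a statement about kernels) into integrality of the rank; once that bookkeeping is in place, no genuine obstacle remains.
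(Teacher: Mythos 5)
Your argument is correct and is exactly the route the paper intends: the corollary is stated as an immediate consequence of Corollary \ref{tensorproductcorollary} (with $d=1$ since the semicircle law is non-atomic), and your polar-decomposition/rank-nullity bookkeeping plus the faithfulness argument for the last claim correctly fills in the details the paper leaves unstated.
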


\section{Proof of Theorem \ref{freeproductwithfreegrouptensorwithotherhasatiyah}}
\label{sec:proofofmaintechincallemma}

This section is devoted to the proof of Theorem \ref{freeproductwithfreegrouptensorwithotherhasatiyah}, which underlies all results of Section \ref{sec:atiyahconjecturefornoncommutativerandomvariables}.  Our proof is essentially the same as the argument of Schick in \cite{Sc} adapted for the case of algebras.  This proof has themes similar to those used in \cite{Reich}*{Lemma 10.43}, which makes use of the notion of a Fredholm module to show that the free groups satisfy the Strong Atiyah Conjecture.  The idea of applying Fredholm modules has its roots in a proof of the Kadison Conjecture for free groups on two generators from \cite{Connes}.
\begin{proof}[Proof of Theorem \ref{freeproductwithfreegrouptensorwithotherhasatiyah}]
Let $\mathcal{H} := L_2(\mathfrak{M},\tau_\mathfrak{M})$.  Thus $\mathfrak{M}$ has left and right actions on $\mathcal{H}$.  Similarly, let $\mathcal{K} := L_2(\mathfrak{N},\tau_\mathfrak{N})$.  For a right-$(\mathfrak{M}\overline{\otimes} \mathfrak{N})^{\oplus \ell}$ invariant subspace $\mathcal{L}$ of $(\mathcal{H}  \otimes \mathcal{K})^{\oplus \ell}$, we define
\[
dim_{\mathfrak{M}\overline{\otimes} \mathfrak{N}}(\mathcal{L}) := tr_{\mathfrak{M}\overline{\otimes} \mathfrak{N}}(Q) = (\tau_\mathfrak{M} \overline{\otimes} \tau_\mathfrak{N})_{\ell}(Q)
\]
where $Q$ is the orthogonal projection onto $\mathcal{L}$ (which is an element of $\mathcal{M}_\ell(\mathfrak{M}\overline{\otimes} \mathfrak{N})$ acting on the left).
\par
For later convenience we desire to construct a certain isomorphism of Hilbert spaces that commonly appears in the proof that $\mathbb{F}_n$ satisfies the Strong Atiyah Conjecture. We desire a bijection
\[
\psi : \{\delta_h \, \mid \, h \in \mathbb{F}_n \setminus \{e\}\} \to \{\delta_h \otimes e_i \, \mid \, h \in \mathbb{F}_n, i \in \{1,\ldots, n\}\}.
\]
(where $\{e_i\}^n_{i=1}$ are the canonical orthonormal basis for $\mathbb{C}^n$) as this will clearly produce a unitary operator
\[
\Psi : \ell_2(\mathbb{F}_n) \setminus (\mathbb{C} \delta_e) \to \ell_2(\mathbb{F}_n) \otimes \mathbb{C}^n.
\]
Let $\{u_i\}^n_{i=1}$ be generators for $\mathbb{F}_n$.  Consider the Cayley graph of $\mathbb{F}_n$ with edges $\{g, gu_i\}$.  For each $h \in \mathbb{F}_n \setminus \{e\}$ let $e(h)$ be the first edge of the geodesic from $h$ to $e$.  Thus we may write $e(h) = \{ \psi_0(h), \psi_0(h)u_{r(h)}\}$ for some $r(h) \in \{1,\ldots, n\}$.  Thus if we define
\[
\psi(\delta_h) := \delta_{\psi_0(h)} \otimes e_{r(h)},
\]
we clearly obtain a bijection.
\par 
Let $\lambda$ denote the left regular representation of $\mathbb{F}_n$ on $\ell_2(\mathbb{F}_n)$.  We claim that $\Psi$ has the property that for each $T \in \mathbb{C}\mathbb{F}_n$ the set of $\{\delta_h\}_{h \in \mathbb{F}_n\setminus \{e\}}$ such that $\Psi(\lambda(T)\delta_h)$ does not make sense (i.e. $\langle \lambda(T)\delta_h, \delta_e\rangle \neq 0$) or 
\[
\Psi(\lambda(T)\delta_h) \neq (\lambda(T) \otimes I_{\mathbb{C}^n})\Psi(\delta_h)
\]
is finite.  To see this notice for fixed $g,h \in \mathbb{F}_n$ the only way that $\lambda(g)(\delta_h) \notin \ell_2(\mathbb{F}_n) \ominus (\mathbb{C}\delta_e)$ is if $gh = e$ and the only way that $\Psi(\lambda(g)\delta_h) \neq (\lambda(g) \otimes I_{\mathbb{C}^n})\Psi(\delta_h)$ can occur is if when reducing $gh$ a term from $g$ cancels the second-last letter in $h$ (which occurs for a finite number of $h$ for a given $g$).  Thus the claim follows by the linearity of $\Psi$.
\par 
Let $\{\zeta_j\}_{j \in \mathbb{Z}}$ be any orthonormal basis for $\mathcal{K}$ with $\zeta_0$ a trace vector.  We claim we may assume that there exists an orthonormal basis $\{\xi_j\}_{j \in \mathbb{Z}}$ of $\mathcal{H}$ such that $\xi_0$ is a trace vector and 
\[
\{k \in \mathbb{Z} \, \mid \, \langle T\xi_j, \xi_k\rangle \neq 0\}
\]
is finite for each $j \in \mathbb{Z}$ and $T \in \mathcal{A}$.  To see this, we first may assume that $\mathcal{A}$ is finitely generated by self-adjoint operators $\{A_k\}^m_{k=1}$ since we need only check the Atiyah Property for one matrix with entries in $(\mathcal{A} \ast \mathbb{C}\mathbb{F}_n) \odot \mathcal{B}$ at a time and a finite number of elements of $\mathcal{A}$ will appear.  If $\{\xi'_j\}_{j \in \mathbb{Z}}$ is any orthonormal basis of $\mathcal{H}$ with $\xi'_0 = \xi_0$ a trace vector, then the desired basis will be produced by applying the Gram-Schmidt Orthogonalization Process to 
\[
\{A_{i_1} \cdots A_{i_m}\xi'_j \, \mid \, j \in \mathbb{Z}, m \in \mathbb{N} \cup \{0\}, \{i_k\}^m_{k=1} \subseteq \{1,\ldots, n\}\}
\]
starting with $\xi'_0$.  
\par 
Recall $(\mathcal{A}\ast \mathbb{C}\mathbb{F}_n) \odot \mathcal{B}$ acts on $((\mathcal{H},\xi_0) \ast (\ell_2(\mathbb{F}_n), \delta_e)) \otimes \mathcal{K}$ and 
\[
(\mathcal{H},\xi_0) \ast (\ell_2(\mathbb{F}_n), \delta_e)= \mathbb{C} \xi_0 \oplus \left( \bigoplus \mathbb{C} \left(\xi_{j_1} \otimes \delta_{g_1}\otimes \cdots \right) \right)\oplus \left( \bigoplus \mathbb{C} \left(\delta_{g_1} \otimes \xi_{j_1} \otimes \cdots\right)  \right)
\]
(where $\xi_0 = \delta_e$) where all the tensors in the direct sums have finite length (ending at any point), alternate between basis elements of $\mathcal{H}$ and $\ell_2(\mathbb{F}_n)$, $j_k \in \mathbb{N}$, $i_k \in \mathbb{Z}\setminus \{0\}$, and $g_k \in \mathbb{F}_n \setminus \{e\}$.  Notice that the union of the vectors used in the above definition of $(\mathcal{H},\xi_0) \ast (\ell_2(\mathbb{F}_n), \delta_e)$ is an orthonormal basis for $(\mathcal{H},\xi_0) \ast (\ell_2(\mathbb{F}_n), \delta_e)$.  For convenience of notation, $\xi_0 \otimes \delta_{g_1}  \otimes \cdots := \delta_{g_1} \otimes \cdots$, $\cdots  \otimes \delta_{g_m} \otimes \xi_0 := \cdots \otimes \delta_{g_m}$, and $\cdots \otimes \xi_{j_m} \otimes \delta_e = \cdots \otimes \xi_{j_m}$.
\par
Define the Hilbert spaces
\[
\mathcal{L}_+ := ((\mathcal{H},\xi_0) \ast (\ell_2(\mathbb{F}_n), \delta_e)) \otimes \mathcal{K}
\,\,\mbox{ and }\,\,
 \mathcal{L}_-:= (\mathcal{L}_+ \otimes \mathbb{C}^n \otimes \mathcal{H}) \oplus (\mathcal{H} \otimes \mathcal{K}).
\]
Notice that $(\mathcal{A}\ast \mathbb{C}\mathbb{F}_n) \odot \mathcal{B}$ has a canonical left action on $\mathcal{L}_+$ and thus induces a canonical left action on $\mathcal{L}_-$ by letting an operator $T \in (\mathcal{A}\ast \mathbb{C}\mathbb{F}_n) \odot \mathcal{B}$ act via $(T \otimes I_{\mathbb{C}^n} \otimes I_\mathcal{H}) \oplus 0$.  Thus we may view $\mathcal{L}_+$ and $\mathcal{L}_-$ as left $(\mathcal{A}\ast \mathbb{C}\mathbb{F}_n) \odot \mathcal{B}$-modules.  Similarly, $\mathfrak{M} \overline{\otimes} \mathfrak{N}$ has a canonical right action on $\mathcal{H} \otimes \mathcal{K}$ and thus on $\mathcal{L}_+$ by 
\[
(\cdots \otimes \delta_{g_m} \otimes \xi_{j_m} \otimes \zeta)T = \cdots \otimes \delta_{g_m} \otimes ((\xi_{j_m} \otimes \zeta)T))
\]
for all $\zeta \in \mathcal{K}$.  Hence $\mathcal{L}_+$ is also a right $\mathfrak{M} \overline{\otimes} \mathfrak{N}$-module.  It is clear that the right action of $\mathfrak{M} \overline{\otimes} \mathfrak{N}$ and the left action of $(\mathcal{A} \ast \mathbb{C}\mathbb{F}_n) \odot \mathcal{B}$ on $\mathcal{L}_+$ commute.
\par 
We desire to construct a bijection $\phi$ between the canonical basis elements of $\mathcal{L}_+$ and $\mathcal{L}_-$ which will induce a unitary operator $\Phi : \mathcal{L}_+ \to \mathcal{L}_-$.  It is clear that if $\Lambda := \Lambda_0 \cup \Lambda'$ where $\Lambda_0 = \{\xi_j \otimes \zeta_{j'}\}_{j,j'\in \mathbb{Z}}$ and
\[
\Lambda' := \left\{ (\xi_{j_0} \otimes \delta_{g_1} \otimes \cdots \otimes \delta_{g_m} \otimes \xi_{j_m}) \otimes \zeta_{j'}\, \left| \, 
\begin{array}{cc}
m \geq 1,  \{g_k\}^{m}_{k=1} \in \mathbb{F}_n \setminus \{e\},  \\
j_0, j_m, j' \in \mathbb{Z}, \{j_k\}^{m-1}_{k=1} \subseteq \mathbb{Z}\setminus \{0\}
\end{array}
\right. \right\},
\]
then $\Lambda$ is an orthonormal basis of $\mathcal{L}_+$.  Furthermore 
\[
\Theta := \{0 \oplus (\xi_j \otimes \zeta_{j'})\}_{j,j' \in \mathbb{Z}} \cup \{ (\eta \otimes e_i \otimes \xi_j) \oplus 0\, \mid \, \eta \in \Lambda, j \in \mathbb{Z}, i \in \{1,\ldots, n\}\}
\]
is an orthonormal basis of $\mathcal{L}_-$.  Define $\phi : \Lambda \to \Theta$ by defining $\phi|_{\Lambda_0}$ via
\[
\phi(\xi_j \otimes \zeta_{j'}) = 0 \oplus (\xi_j \otimes \zeta_{j'})
\]
for all $j,j'\in \mathbb{Z}$ and by defining $\phi|_{\Lambda'}$ via the following rule: for 
\[
\eta = (\xi_{j_0} \otimes \delta_{g_1} \otimes \cdots \otimes \delta_{g_m} \otimes \xi_{j_m}) \otimes \zeta_{j'} \in \Lambda
\]
define
\[
\phi(\eta) = (((\xi_{j_0} \otimes \delta_{g_1} \otimes \cdots \otimes \delta_{g_{m-1}} \otimes \xi_{j_{m-1}} \otimes \delta_{\psi_0(g)}) \otimes \zeta_{j'}) \otimes e_{r(g)} \otimes \xi_{j_m}) \oplus 0 
\]
(where if $\psi_0(g) = e$, we reduce the length of the first tensor by removing $\delta_e$).  Since $\Psi$ is a bijection on the given basis elements, it is elementary to verify that $\phi$ is a bijection and thus induces a Hilbert space isomorphism $\Phi : \mathcal{L}_+ \to \mathcal{L}_-$.
\par 
Define a right $\mathfrak{M}\overline{\otimes}\mathfrak{N}$-module structure on $\mathcal{L}_-$ by defining $\eta T := \Phi((\Phi^{-1}(\eta))T)$ for all $T \in \mathfrak{M}\overline{\otimes}\mathfrak{N}$ and $\eta \in \mathcal{L}_-$.  It is easy to see that
\[
(( \eta \otimes e_i \otimes \xi_k) \oplus (\xi_j \otimes \zeta_{j'}))(T \otimes S) = ( \eta(I_\mathcal{H} \otimes S) \otimes e_i \otimes \xi_kT) \oplus (\xi_j T \otimes \zeta_{j'}S)
\]
for all $T \in \mathfrak{M}$ and $S \in \mathfrak{N}$.  Hence $0 \oplus (\mathcal{H} \otimes \mathcal{K})$ and $(\mathcal{L}_+ \otimes \mathbb{C}^n \otimes \mathcal{H}) \oplus 0$ are a right $\mathfrak{M}\overline{\otimes}\mathfrak{N}$-invariant subspace of $\mathcal{L}_-$.  It is clear that the right action of $\mathfrak{M}\overline{\otimes}\mathfrak{N}$ on $\mathcal{L}_-$ commutes with the left action of $(\mathcal{A} \ast \mathbb{C} \mathbb{F}_n) \odot \mathcal{B}$ on $\mathcal{L}_-$.
\par 
Define $\Xi$ to be the union of $\{\xi_0 \otimes \zeta_0\}$ with
\[
\left\{ (\xi_{j_0} \otimes \delta_{g_1} \otimes \cdots \otimes \xi_{j_{m-1}} \otimes \delta_{g_m}) \otimes \zeta_0 \, \left| \, 
\begin{array}{cc}
m \geq 1,  \{g_k\}^{m}_{k=1} \in \mathbb{F}_n \setminus \{e\},  \\
j_0 \in \mathbb{Z}, \{j_k\}^{m-1}_{k=1} \subseteq \mathbb{Z}\setminus \{0\}
\end{array}
\right. \right\}.
\]
It is clear that $\Xi$ is a set of orthonormal vectors in $\mathcal{L}_+$ each of which generates a one-$\mathfrak{M}\overline{\otimes}\mathfrak{N}$-dimensional right $\mathfrak{M}\overline{\otimes}\mathfrak{N}$-submodule of $\mathcal{L}_+$ that are pairwise orthogonal and whose union is dense in $\mathcal{L}_+$ (as $\xi_0$ and $\zeta_0$ are cyclic vectors for the right actions).  By the definition of $\Phi$ it is clear that $\Phi(\Xi)$ is a set of orthonormal vectors in $\mathcal{L}_-$ each of which generates a one-$\mathfrak{M}\overline{\otimes}\mathfrak{N}$-dimensional right $\mathfrak{M}\overline{\otimes}\mathfrak{N}$-submodule of $\mathcal{L}_-$ that are pairwise orthogonal and whose union is dense in $\mathcal{L}_-$.  
\par 
We claim if $T \in (\mathcal{A}\ast \mathbb{C}\mathbb{F}_n) \odot \mathcal{B}$, then 
\[
\{ \xi \in \Xi \, \mid \, \langle T\xi, \xi_j\otimes \zeta_{j'}\rangle_{\mathcal{L}_+} \neq 0 \mbox{ for some }j,j' \in \mathbb{Z} \mbox{ or }\Phi(T(\xi)) \neq T(\Phi(\xi))\}
\]
is a finite subset (containing $\xi_0$).  By linearity it suffices to prove the claim when $T$ is a product of elements from $\mathcal{A} \cup \mathcal{B} \cup \{\lambda(h)\}_{h \in \mathbb{F}_n}$.  First we will prove the claim when $T \in \mathcal{A} \cup \mathcal{B}$. However, it clearly follows that $\langle T\xi, \xi_j \otimes \zeta_{j'}\rangle_{\mathcal{L}_+} \neq 0$ for some $j,j' \in \mathbb{Z}$ or $\Phi(T(\xi)) \neq T\Phi(\xi)$ only if $\xi = \xi_0 \otimes \zeta_0$.
\par
Next we will prove the claim for $T \in \{\lambda(h)\}_{h \in \mathbb{F}_n\setminus \{e\}}$.  Fix $h \in \mathbb{F}_n$, fix $T = \lambda(h)$, and fix 
\[
\xi = \xi_{j_0} \otimes \delta_{g_1} \otimes \cdots \xi_{j_{m-1}}\otimes \delta_{g_m} \otimes \zeta_0 \in \Xi \setminus \{\xi_0 \otimes \zeta_0\}.
\]
If $m > 1$ or $j_0 \neq 0$, then $\langle T\xi, \xi_j  \otimes \zeta_{j'}\rangle = 0$ for all $j, j' \in \mathbb{Z}$ and $\Phi(T(\xi)) = T(\Phi(\xi))$ are clear.  Otherwise $\xi = \delta_{g_1} \otimes \zeta_0$ and it clear that $\langle T\xi, \xi_j\otimes \zeta_{j'}\rangle \neq 0$ for some $j, j' \in \mathbb{Z}$ only if $hg_1 = e$ and $\Phi(T(\xi)) = T(\Phi(\xi))$ unless $\Psi(T \delta_{g_1}) \neq (T \otimes I_{\mathbb{C}^n}) \Psi(\delta_{g_1})$.  Since the number of such $g_1$ is finite, the claim holds in this case.
\par
Next notice for any element $\xi \in \Xi$ and any element $T$ of $\mathcal{A} \cup \{\lambda(h)\}_{h \in \mathbb{F}_n}$ that $T\xi$ is a finite linear combination of elements of $\Xi \cup\{\xi_j\otimes \zeta_0\}_{j \in \mathbb{Z}}$ by the choice of the orthonormal basis $\{\xi_j\}_{j \in \mathbb{Z}}$.  Furthermore, for any element $\xi \in \Xi \cup\{\xi_j\otimes \zeta_0\}_{j \in \mathbb{Z}}$ and any element $T$ of $\mathcal{A} \cup \{\lambda(h)\}_{h \in \mathbb{F}_n}$ there are only a finite number of elements $\eta$ of $\Xi$ such that $\langle T\eta, \xi\rangle_{\mathcal{L}_+} \neq 0$.  Therefore if $T_1, \ldots, T_n \in \mathcal{A} \cup \{\lambda(h)\}_{h \in \mathbb{F}_n}$, then the set of all $\xi \in \Xi$ such that $\langle T_1 \cdots T_n\xi, \xi_j\otimes \zeta_{j'}\rangle_{\mathcal{L}_+} \neq 0$ for some $j,j' \in \mathbb{Z}$, $\langle T_2 \cdots T_n\xi, \xi_j \otimes \zeta_{j'}\rangle_{\mathcal{L}_+} \neq 0$ for some $j,j' \in \mathbb{Z}$, or $\Phi(T_1\cdots T_n\xi) \neq T_1\Phi(T_2 \cdots T_n\xi)$ is finite.  Thus the claim then follows by recursion and the fact that the $\mathcal{B}$-operator commute with elements of $\mathcal{A} \ast \mathbb{CF}_n$ and with $\Phi$.
\par 
The above construction show that we have two representations of $(\mathcal{A} \ast \mathbb{CF}_n) \odot \mathcal{B}$ that differ by a $\mathcal{A} \odot \mathcal{B}$-finite rank operator.  In order to complete the proof, we need a way to analyze the trace of such operators.  Fix $\ell \in \mathbb{N}$ and fix 
\[
A := [A_{i,j}] \in \mathcal{M}_\ell((\mathcal{A}\ast \mathbb{C}\mathbb{F}_n) \odot \mathcal{B}).
\]
The left actions of $(\mathcal{A}\ast \mathbb{C}\mathbb{F}_n) \odot \mathcal{B}$ on $\mathcal{L}_\pm$ allows $A$ to act on $\mathcal{L}_\pm^{\oplus \ell}$.  Let $A_{\pm}$ be the left action of $A$ on $\mathcal{L}_\pm^{\oplus \ell}$ and let $P_\pm \in \mathcal{B}(\mathcal{L}_\pm^{\oplus \ell})$ be the projection onto the image of $A_\pm$.  Thus we desire to show that $((\tau_\mathfrak{M} \ast \tau_{\mathbb{F}_n}) \overline{\otimes} \tau_{\mathfrak{N}})_\ell(P_+) \in \frac{1}{d}\mathbb{Z}$.  Since the right action of $\mathfrak{M}\overline{\otimes}\mathfrak{N}$ on $\mathcal{L}_\pm$ commutes with the left action of $(\mathcal{A}\ast \mathbb{C}\mathbb{F}_n) \odot \mathcal{B}$, we easily obtain that all operators under consideration commute with the diagonal right action of $\mathfrak{M}\overline{\otimes}\mathfrak{N}$ on these spaces.
\par 
Notice that there are only finitely many elements of $(\mathcal{A}\ast \mathbb{C}\mathbb{F}_n) \odot \mathcal{B}$ that appear in $A$.  For each of these elements $T$, we recall that 
\[
\{ \xi \in \Xi \, \mid \, \langle T\xi, \xi_j\otimes \zeta_{j'}\rangle_{\mathcal{L}_+} \neq 0 \mbox{ for some }j,j' \in \mathbb{Z} \mbox{ or }\Phi(T(\xi)) \neq T(\Phi(\xi))\}
\]
is finite.  Let $\mathcal{L}_{+,0}$ be the finite $\mathfrak{M}\overline{\otimes}\mathfrak{N}$-dimensional right $\mathfrak{M}\overline{\otimes}\mathfrak{N}$-submodule of $\mathcal{L}_+$ spanned by the vectors that appear in the above set for at least one $T \in (\mathcal{A}\ast \mathbb{C}\mathbb{F}_n) \odot \mathcal{B}$ appearing in $A$.  Thus $\mathcal{L}_{+,c} := \mathcal{L}_+ \ominus \mathcal{L}_{+,0}$ is a right $\mathfrak{M}\overline{\otimes}\mathfrak{N}$-submodule of $\mathcal{L}_+$.
\par 
Let $\mathcal{L}_{-,c} := \Phi(\mathcal{L}_{+,c})$, which is a right $\mathfrak{M}\overline{\otimes}\mathfrak{N}$-submodule of $\mathcal{L}_-$.  Therefore, since $\mathcal{L}_{+,0}$ contained all $\xi \in \Xi$ where $\Phi(T(\xi)) \neq T(\Phi(\xi))$ for some $T \in (\mathcal{A}\ast \mathbb{C}\mathbb{F}_n) \odot \mathcal{B}$ appearing in $A$ and since the right $\mathfrak{M}\overline{\otimes}\mathfrak{N}$-actions commutes with the left action of $T$ and with $\Phi$, we clearly obtain that
\[
A_+|_{\mathcal{L}_{+,c}} = \Phi^{-1} \circ A_- \circ \Phi|_{\mathcal{L}_{+,c}}.
\]
\par
By progressively adding the right $\mathfrak{M}\overline{\otimes}\mathfrak{N}$-submodule of $\mathcal{L}_+$ generated by a single element of $\Xi$ we can choose an increasing sequence 
\[
\mathcal{L}_{+,0} \subset \mathcal{L}_{+,1} \subset \mathcal{L}_{+,2} \subset \cdots \subset \mathcal{L}_{+}
\]
of finite $\mathfrak{M}\overline{\otimes}\mathfrak{N}$-dimensional right $\mathfrak{M}\overline{\otimes}\mathfrak{N}$-submodules of $\mathcal{L}_{+}$ such that 
\[
\mathcal{L}_{+} = \overline{\bigcup_{j \geq 0} \mathcal{L}_{+,j}}.
\]
Let $\mathcal{L}_{-,j} := \Phi(\mathcal{L}_{+,j})$ for all $j \in \mathbb{N}\cup \{0\}$.  Hence each $\mathcal{L}_{-,j}$ is a right $\mathfrak{M}\overline{\otimes}\mathfrak{N}$-submodule of $\mathcal{L}_{-}$ generated by a finite number of elements of $\Phi(\Xi)$.  Notice that $\Lambda_0 \subseteq \mathcal{L}_{+,0}$ so $0 \oplus (\mathcal{H} \otimes \mathcal{K}) \subseteq \mathcal{L}_{-,0}$.  By construction, it is clear that 
\[
A_\pm (\mathcal{L}_{\pm}^{\oplus \ell}) = \overline{\bigcup_{j\geq 0} A_\pm (\mathcal{L}_{\pm,j}^{\oplus \ell})}.
\]
For each $j \in \mathbb{N} \cup \{0\}$ let $P_{\pm, j}$ be the orthogonal projections onto $\overline{A_\pm (\mathcal{L}_{\pm,j}^{\oplus \ell}})$.
\par
Since only finitely many elements of $(\mathcal{A}\ast \mathbb{C}\mathbb{F}_n) \odot \mathcal{B}$ appear in $A$, by our selection right $\mathfrak{M}\overline{\otimes}\mathfrak{N}$-modules generated by elements of $\Xi$ we see that $A_+$ has finite propagation; that is, for every $j \in \mathbb{N}$ there exists an $n_j \in \mathbb{N}$ such that $A_+ (\mathcal{L}_{+,j}^{\oplus \ell})\subseteq \mathcal{L}_{+,n_j}^{\oplus \ell}$.  Indeed an element of $\mathcal{B}$ does not modify the submodule, $\{\lambda(h)\}_{h \in \mathbb{F}_n}$ permutes the elements of $\Xi$, and an element of $\mathcal{A}$ maps an element of $\Xi$ to at most a finite-$\mathfrak{M}\overline{\otimes}\mathfrak{N}$-dimensional $\mathfrak{M}\overline{\otimes}\mathfrak{N}$-module by the choice of the basis $\{\xi_j\}_{j \in \mathbb{Z}}$.  Similarly, as the left action of $(\mathcal{A}\ast \mathbb{C}\mathbb{F}_n) \odot \mathcal{B}$ on $\mathcal{L}_-$ has the same form and the right $\mathfrak{M}\overline{\otimes}\mathfrak{N}$-modules $\mathcal{L}_{-,j}$ are generated by elements of $\Phi(\Xi)$, $A_-$ also has propagation so we may assume that $A_- (\mathcal{L}_{-,j}^{\oplus \ell}) \subseteq \mathcal{L}_{-,n_j}^{\oplus \ell}$ by choosing $n_j$ sufficiently large.  
\par
The above allows us to view $A_\pm (\mathcal{L}_{\pm,j}^{\oplus \ell})$ as images of rectangular matrices with entries in $\mathcal{A} \odot \mathcal{B}$ acting on the left from $(\mathcal{H}\otimes \mathcal{K})^{\oplus q_j}$ to $(\mathcal{H}\otimes \mathcal{K})^{\oplus p_j}$ for some appropriate choice of $q_j$ and $p_j$.  Indeed an element from $\mathbb{CF}_n$ acting on an element of $\Xi$ or $\Phi(\Xi)$ acts as a scalar matrix since $\{\lambda(h)\}_{h \in \mathbb{F}_n}$ sends the right $\mathfrak{M}\overline{\otimes} \mathfrak{N}$-basis vectors $\Xi$ and $\Phi(\Xi)$ to scalar multiples of other elements of $\Xi$ and $\Phi(\Xi)$ respectively.  Furthermore, each element $T \in \mathcal{A}$ acts by the usual left action of $\mathcal{A}$ on $\mathcal{H} \subseteq \mathcal{L}_{+}$ (which corresponds to the action of $\mathcal{A} \otimes I_\mathcal{K}$ on the right $\mathfrak{M}\overline{\otimes} \mathfrak{N}$-module generated by $\xi_0 \otimes \zeta_0 \in \Xi$) and otherwise act by sending the other elements of $\Xi$ and every element of $\Phi(\Xi)$ to a finite linear combination of elements of $\Xi$ and $\Phi(\Xi)$ respectively and thus can be viewed as scalar matrices on these right $\mathfrak{M}$-modules.  Furthermore, it is clear that an element of $\mathcal{B}$ acts via $I_\mathcal{H} \otimes \mathcal{B}$ on each of the one-$\mathfrak{M} \overline{\otimes} \mathfrak{N}$-dimensional right $\mathfrak{M} \overline{\otimes} \mathfrak{N}$-modules spanned by an element of $\Xi$ or $\Phi(\Xi)$.  Thus the claim follows.  Therefore, since $\mathcal{A}\odot \mathcal{B}$ has the Atiyah Property with group $\frac{1}{d}\mathbb{Z}$, we obtain that
\[
tr_{\mathfrak{M} \overline{\otimes} \mathfrak{N}}(P_{\pm, j}) = dim_{\mathfrak{M}}(A_\pm (\mathcal{L}_{\pm,j}^{\oplus \ell})) \in \frac{1}{d}\mathbb{Z}.
\]
\par
Notice that 
\[
\overline{A_\pm(\mathcal{L}_{\pm,0}^{\oplus \ell})},\,\, \overline{A_\pm(\mathcal{L}_{\pm,c}^{\oplus \ell})},  \mbox{ and each } \overline{A_\pm((\mathcal{L}_{\pm,j} \cap \mathcal{L}_{+,c})^{\oplus \ell})}
\]
are all closed right $\mathfrak{M}\overline{\otimes} \mathfrak{N}$-modules  (note $\mathcal{L}_{\pm,j} \cap \mathcal{L}_{\pm,c} = \mathcal{L}_{\pm,j} \ominus \mathcal{L}_{\pm,0}$).  We claim that 
\[
\begin{array}{l}
dim_{\mathfrak{M}\overline{\otimes} \mathfrak{N}}\left(  \overline{A_\pm(\mathcal{L}_{\pm,0}^{\oplus \ell})\cap \overline{A_\pm(\mathcal{L}_{\pm,c}^{\oplus \ell})} } \right) \\
 = \lim_{j\to \infty} dim_{\mathfrak{M}\overline{\otimes} \mathfrak{N}}\left( \overline{ A_{\pm}(\mathcal{L}_{\pm,0}^{\oplus \ell}) \cap \overline{A_\pm((\mathcal{L}_{\pm,j} \cap \mathcal{L}_{\pm,c})^{\oplus \ell})}}  \right).
 \end{array} 
\]
To see this, it suffices by the continuity of von Neumann dimension (see \cite{Luck}*{proof of Theorem 1.12}) to show that
\[
\overline{A_\pm(\mathcal{L}_{\pm,0}^{\oplus \ell}) \cap \overline{A_\pm(\mathcal{L}_{\pm,c}^{\oplus \ell})}} = \overline{ \bigcup_{j \geq 0} A_{\pm}(\mathcal{L}_{\pm,0}^{\oplus \ell}) \cap \overline{A_\pm((\mathcal{L}_{\pm,j} \cap \mathcal{L}_{\pm,c})^{\oplus \ell})}}.
\]
To see this, notice one inclusion is trivial.  For the other inclusion, recall that $A_\pm$ has finite propagation so there exists an $n_0 \in \mathbb{N}$ such that $A_\pm(\mathcal{L}_{\pm,0}^{\oplus \ell}) \subseteq \mathcal{L}_{\pm,n_0}^{\oplus \ell}$ so
\[
\begin{array}{rcl}
A_\pm(\mathcal{L}_{\pm,0}^{\oplus \ell}) \cap \overline{A_\pm(\mathcal{L}_{\pm,c}^{\oplus \ell})} &=& A_\pm(\mathcal{L}_{\pm,0}^{\oplus \ell}) \cap \mathcal{L}_{\pm,n_0}^{\oplus \ell} \cap \overline{A_\pm(\mathcal{L}_{\pm,c}^{\oplus \ell})} 
  \\
 &=&  A_\pm(\mathcal{L}_{\pm,0}^{\oplus \ell}) \cap \mathcal{L}_{\pm,n_0}^{\oplus \ell} \cap \left(\overline{\bigcup_{j\geq 1}A_\pm((\mathcal{L}_{\pm,j}\cap \mathcal{L}_{\pm,c})^{\oplus \ell})}\right).
 \end{array} 
\]
We claim that
\[
\mathcal{L}_{\pm,n_0}^{\oplus \ell} \cap \left(\overline{\bigcup_{j\geq 1}A_\pm((\mathcal{L}_{\pm,j}\cap \mathcal{L}_{\pm,c})^{\oplus \ell})}\right) = \mathcal{L}_{\pm,n_0}^{\oplus \ell}\cap \overline{A_\pm((\mathcal{L}_{\pm,m}\cap \mathcal{L}_{\pm,c})^{\oplus \ell})}
\]
for some sufficiently large $m \in \mathbb{N}$.  Specifically, to choose $m$, we notice, by the same arguments that $\Phi$ almost commutes with the left actions, that there exists an $m \in \mathbb{N}$ such that if $\eta \in \mathcal{L}_{\pm,m+k} \ominus \mathcal{L}_{\pm,m}$ for any $k\geq 1$, then every entry of $A$ applied to $\eta$ is orthogonal to $\mathcal{L}_{\pm,n_0}$ (that is, there are a finite number of elements $\eta$ of $\Xi$ for which there is an entry $T$ in $A$ such that $T\eta$ has non-zero inner product with an element of $\mathcal{L}_{\pm,n_0} \cap \Xi$).  To see the above equality for this $m \in \mathbb{N}$, we notice that one inclusion is trivial.  For the other inclusion, fix 
\[
\xi \in \mathcal{L}_{\pm,n_0}^{\oplus \ell} \cap \left(\overline{\bigcup_{j\geq 1} A_\pm((\mathcal{L}_{\pm,j}\cap \mathcal{L}_{\pm,c})^{\oplus \ell})}\right).
\]
Thus there exists $\eta_j \in (\mathcal{L}_{\pm,c}\cap \mathcal{L}_{\pm,j})^{\oplus \ell}$ such that $\xi = \lim_{j \to \infty} A_\pm \eta_j$.  Therefore, if $P$ is the projection of $\mathcal{L}_{\pm,c}^{\oplus \ell}$ onto $(\mathcal{L}_{\pm,m} \cap \mathcal{L}_{\pm,c})^{\oplus \ell}$, then
\[
A\eta_j = A(P\eta_j) + \omega_j
\]
where $\omega_j \in (\mathcal{L}_{\pm,n_0}^{\oplus \ell})^\bot$.  Therefore, since 
\[
\lim_{j \to \infty} A_\pm \eta_j = \xi \in \mathcal{L}_{\pm,n_0}^{\oplus \ell},
\]
we obtain that $\lim_{j\to\infty} \omega_j = 0$ and $\xi = \lim_{j\to\infty}  A(P\zeta_j)$ where $P\zeta_j  \in (\mathcal{L}_{\pm,m} \cap \mathcal{L}_{\pm,c})^{\oplus \ell}$ as desired.  Hence the claim is complete.  Thus
\[
\begin{array}{rcl}
A_\pm(\mathcal{L}_{\pm,0}^{\oplus \ell}) \cap \overline{A_\pm(\mathcal{L}_{\pm,c}^{\oplus \ell})}  &=& A_\pm(\mathcal{L}_{\pm,0}^{\oplus \ell}) \cap \mathcal{L}_{\pm,n_0}^{\oplus \ell}\cap \overline{A_\pm((\mathcal{L}_{\pm,m}\cap \mathcal{L}_{\pm,c})^{\oplus \ell})}  \\
 &=&  A_\pm(\mathcal{L}_{\pm,0}^{\oplus \ell}) \cap  \overline{A_\pm((\mathcal{L}_{\pm,m}\cap \mathcal{L}_{\pm,c})^{\oplus \ell})}\\
 &\subseteq&  \overline{ \bigcup_{j \geq 0} A_{\pm}(\mathcal{L}_{\pm,0}^{\oplus \ell}) \cap \overline{A_\pm((\mathcal{L}_{\pm,j} \cap \mathcal{L}_{\pm,c})^{\oplus \ell})}}
 \end{array} 
\]
which completes the claim.  
\par 
Let $P_{\pm, c}$ to be the orthogonal projections onto $\overline{A_\pm (\mathcal{L}_{\pm,c}^{\oplus \ell})}$ and for each $j \in \mathbb{N} \cup \{0\}$ let $P_{\pm, j, c}$ be the orthogonal projection onto $\overline{A_\pm (\mathcal{L}_{\pm,j} \cap \mathcal{L}_{\pm,c})^{\oplus \ell}}$.  Notice that $P_{\pm, c}$ and each $P_{\pm, j, c}$ need not be in the von Neumann algebra generated by $\mathcal{M}_\ell((\mathcal{A}\ast \mathbb{C}\mathbb{F}_n) \odot \mathcal{B})$ but do commute with the right $\mathfrak{M}\overline{\otimes} \mathfrak{N}$-action on their respective spaces.  Since 
\[
A_+|_{\mathcal{L}_{+,c}} = \Phi^{-1} \circ A_- \circ \Phi|_{\mathcal{L}_{+,c}},
\]
we obtain that $P_{+, j, c} = \Phi^{-1} \circ P_{-, j, c} \circ \Phi$ for all $j \in \mathbb{N} \cup \{0\}$ and $P_{+, c} = \Phi^{-1} \circ P_{-, c} \circ \Phi$.  Hence
\[
\langle P_{+, c} \eta, \eta\rangle_{\mathcal{L}_{+}^{\oplus \ell}} = \langle P_{-, c} \Phi(\eta), \Phi(\eta)\rangle_{\mathcal{L}_{-}^{\oplus \ell}} \mbox{ and }\langle P_{+, j, c} \eta, \eta\rangle_{\mathcal{L}_{+}^{\oplus \ell}} = \langle P_{-, j, c} \Phi(\eta), \Phi(\eta)\rangle_{\mathcal{L}_{-}^{\oplus \ell}}
\]
for all $j \in \mathbb{N} \cup \{0\}$ and $\eta \in \mathcal{L}_{+}^{\oplus \ell}$.  
\par 
Let $Q_{\pm} := P_\pm - P_{\pm, c}$ and for each $j \in \mathbb{N} \cup \{0\}$ define $Q_{\pm, j} := P_{\pm, j} - P_{\pm, j, c}$.  Clearly these are projections onto the complements of smaller projections in larger projections.  We claim that 
\[
tr_{\mathfrak{M}\overline{\otimes} \mathfrak{N}}(Q_\pm) = \lim_{j\to\infty} tr_{\mathfrak{M}\overline{\otimes} \mathfrak{N}}(Q_{\pm, j}).
\]
To begin, let $A_0$ denote the restriction of $A_\pm$ to $\mathcal{L}_{\pm,0}^{\oplus \ell}$.  We claim for each fixed $j \in \mathbb{N}$ that
\[
0 \longrightarrow ker(Q_{\pm,j }A_0) \longrightarrow \mathcal{L}_{\pm,0}^{\oplus \ell} \stackrel{Q_{\pm,j} A_0}{\longrightarrow} \overline{Im(Q_{\pm,j})} \longrightarrow 0
\]
is a weakly exact sequence (that is, the images are dense in the kernels).  To see this, it suffices to check weak exactness at $\overline{Im(Q_{\pm,j})}$.  It is clear that $Q_{\pm,j} (A_\pm(\mathcal{L}_{\pm,j}^{\oplus \ell}))$ is dense in $\overline{Im(Q_{\pm,j})}$.  However
\[
A_\pm(\mathcal{L}_{\pm,j}^{\oplus \ell}) = A_\pm(\mathcal{L}_{\pm,0}^{\oplus \ell}) + A_\pm((\mathcal{L}_{\pm,j} \cap \mathcal{L}_{\pm,c})^{\oplus \ell})
\]
and it is clear that $Q_{\pm, j}(A((\mathcal{L}_{\pm,j} \cap \mathcal{L}_{\pm,c})^{\oplus \ell})) = 0$.  Thus $Q_{\pm, j}(A_\pm(\mathcal{L}_{\pm,0}^{\oplus \ell})) = Q_{\pm, j}(A_\pm(\mathcal{L}_{\pm,j}^{\oplus \ell}))$ is dense in $\overline{Im(Q_{\pm, j})}$.  Since each term in the weak exact sequence is a right $\mathfrak{M}\overline{\otimes} \mathfrak{N}$-module and weak exact sequence preserve $\mathfrak{M}\overline{\otimes} \mathfrak{N}$-dimension (see \cite{Luck}*{proof of Theorem 1.12}), we obtain that
\[
dim_{\mathfrak{M}\overline{\otimes} \mathfrak{N}}(\mathcal{L}_{\pm,0}^{\oplus \ell}) = dim_{\mathfrak{M}\overline{\otimes} \mathfrak{N}}(Im(Q_{\pm, j})) + dim_{\mathfrak{M}\overline{\otimes} \mathfrak{N}}(ker(Q_{\pm, j}A_0))
\]
(which are all finite as $dim_{\mathfrak{M}\overline{\otimes} \mathfrak{N}}(\mathcal{L}_{\pm,0}^{\oplus \ell})$ is finite by construction).  Furthermore, it is clear that
\[
ker(Q_{\pm, j}A_0) = \{\eta \in \mathcal{L}_{\pm,0}^{\oplus \ell} \, \mid \, Q_{\pm, j}A_0\eta =0 \}.
\]
Hence the sequence
\[
0 \longrightarrow ker(A_0) \longrightarrow ker(Q_{\pm,j }A_0) \stackrel{A_0}{\longrightarrow} \overline{A_\pm(\mathcal{L}_{\pm,0}^{\oplus \ell}) \cap ker(Q_{\pm, j})} \longrightarrow 0
\]
is weakly exact.  This implies the sequence
\[
0 \longrightarrow ker(A_0) \longrightarrow ker(Q_{\pm,j }A_0) \stackrel{A_0}{\longrightarrow} \overline{A_\pm(\mathcal{L}_{\pm,0}^{\oplus \ell}) \cap \overline{A_\pm((\mathcal{L}_{\pm,j} \cap \mathcal{L}_{\pm,c})^{\oplus \ell})}} \longrightarrow 0
\]
is a weakly exact sequence since it is elementary to verify that 
\[
\overline{A_\pm(\mathcal{L}_{\pm,0}^{\oplus \ell}) \cap ker(Q_{\pm, j})} = \overline{A_\pm(\mathcal{L}_{\pm,0}^{\oplus \ell}) \cap \overline{A_\pm((\mathcal{L}_{\pm,j} \cap \mathcal{L}_{\pm,c})^{\oplus \ell})}}.
\]
Hence we obtain that 
\[
\begin{array}{rl}
&dim_{\mathfrak{M}\overline{\otimes} \mathfrak{N}}(ker(Q_{\pm,j }A_0))  \\
=& dim_{\mathfrak{M}\overline{\otimes} \mathfrak{N}}(ker(A_0)) + dim_{\mathfrak{M}\overline{\otimes} \mathfrak{N}}\left(\overline{A_\pm(\mathcal{L}_{\pm,0}^{\oplus \ell}) \cap \overline{A_\pm((\mathcal{L}_{\pm,j} \cap \mathcal{L}_{\pm,c})^{\oplus \ell})}}\right).
 \end{array} 
\]
By combining the two above dimension equations we obtain that
\[
\begin{array}{rl}
 dim_{\mathfrak{M}\overline{\otimes} \mathfrak{N}}(Im(Q_{\pm, j})) =& dim_{\mathfrak{M}\overline{\otimes} \mathfrak{N}}(\mathcal{L}_{\pm,0}^{\oplus \ell}) - dim_{\mathfrak{M}\overline{\otimes} \mathfrak{N}}(ker(A_0)) \\
& - dim_{\mathfrak{M}\overline{\otimes} \mathfrak{N}}\left(\overline{A_\pm(\mathcal{L}_{\pm,0}^{\oplus \ell}) \cap \overline{A_\pm((\mathcal{L}_{\pm,j} \cap \mathcal{L}_{\pm,c})^{\oplus \ell})}}\right)
 \end{array} 
\]
for each $j \in \mathbb{N}$.  Similarly, by repeating the same arguments we obtain that
\[
\begin{array}{rl}
dim_{\mathfrak{M}\overline{\otimes} \mathfrak{N}}(Im(Q_{\pm}))  
=&dim_{\mathfrak{M}\overline{\otimes} \mathfrak{N}}(\mathcal{L}_{\pm,0}^{\oplus \ell}) - dim_{\mathfrak{M}\overline{\otimes} \mathfrak{N}}(ker(A_0)) \\
& - dim_{\mathfrak{M}\overline{\otimes} \mathfrak{N}}\left(\overline{A_\pm(\mathcal{L}_{\pm,0}^{\oplus \ell}) \cap \overline{A_\pm((\mathcal{L}_{\pm,c})^{\oplus \ell})}}\right).
 \end{array} 
\]
Therefore, as all the terms in the above dimension equations are finite (in fact bounded by $dim_{\mathfrak{M}\overline{\otimes} \mathfrak{N}}(\mathcal{L}_{\pm,0}^{\oplus \ell})$),
\[
\begin{array}{rcl}
tr_{\mathfrak{M}\overline{\otimes} \mathfrak{N}}(Q_\pm) &=&  dim_{\mathfrak{M}\overline{\otimes} \mathfrak{N}}(Im(Q_\pm)) \\
 &=&   \lim_{j \to \infty} dim_{\mathfrak{M}\overline{\otimes} \mathfrak{N}}(Im(Q_{\pm, j})) = \lim_{j\to\infty} tr_{\mathfrak{M}\overline{\otimes} \mathfrak{N}}(Q_{\pm, j}).
 \end{array} 
\]
\par
We will now use $\Xi$ and $\Phi(\Xi)$ to compute traces.  For each $\eta \in \Xi$ and $i \in \{1,\ldots, \ell\}$ let 
\[
\eta_i = (0,0,\ldots, 0, \eta, 0, \ldots, 0) \in \mathcal{L}_{+}^{\oplus \ell}
\]
where $\eta$ is in the $i^{th}$ spot and similarly let 
\[
\phi(\eta_i) = (0, \ldots, 0, \phi(\eta), 0, \ldots, 0) \in \mathcal{L}_{-}^{\oplus \ell}.
\]
Since $\Xi$ and $\Phi(\Xi)$ are orthonormal $\mathfrak{M} \overline{\otimes} \mathfrak{N}$-bases for $\mathcal{L}_{+}$ and $\mathcal{L}_{-}$ respectively, we easily obtain that
\[
tr_{\mathfrak{M} \overline{\otimes} \mathfrak{N}}(Q_+) = \sum_{\eta \in \Xi} \sum^\ell_{i=1} \langle Q_+ \eta_i, \eta_i\rangle_{\mathcal{L}_{+}^{\oplus\ell}}
\]
and
\[
tr_{\mathfrak{M} \overline{\otimes} \mathfrak{N}}(Q_-) = \sum_{\eta \in \Xi} \sum^\ell_{i=1} \langle Q_- \phi( \eta_i), \phi(\eta_i) \rangle_{\mathcal{L}_{-}^{\oplus\ell}}.
\]
Furthermore, we notice if $\eta = \xi_0 \otimes \zeta_0 \in \Xi$, then
\[
\sum^\ell_{i=1} \langle P_+ \eta_i, \eta_i\rangle_{\mathcal{L}_{+}^{\oplus\ell}} = ((\tau_\mathfrak{M} \ast \tau_{\mathbb{F}_n})\overline{\otimes} \tau_\mathfrak{N})_\ell(P_+)
\]
whereas
\[
\sum^\ell_{i=1} \langle P_- \phi(\eta_i),\phi(\eta_i)\rangle_{\mathcal{L}_{-}^{\oplus \ell}} =\sum^\ell_{i=1} 0 = 0
\]
by the definition of $A_-$ and $P_-$.  Finally, we claim that
\[
\sum^\ell_{i=1} \langle P_+ \eta_i, \eta_i\rangle_{\mathcal{L}_{+}^{\oplus\ell}}  - \langle P_- \phi(\eta_i),\phi(\eta_i)\rangle_{\mathcal{L}_{-}^{\oplus \ell}} = 0
\]
for all $\eta \in \Xi \setminus \{\xi_0 \otimes \zeta_0\}$.  To see this, suppose 
\[
\eta = (\xi_{j_0} \otimes \delta_{g_1} \otimes \cdots \otimes \delta_{g_m}) \otimes \zeta_{0} \in \Xi \setminus \{\xi_0 \otimes \zeta_0\}.
\]
Then, by considering the above expression of $\phi(\eta)$ and the right action of $L(\mathbb{F}_n)$ on $(\mathcal{H}, \xi_0) \ast (\ell_2(\mathbb{F}_n), \delta_e)$, there exists a unitary operator $U_\eta \in L(\mathbb{F}_n)$ such that $U_\eta$ commutes with the left actions of $\mathfrak{M}$, $L(\mathbb{F}_n)$, and $\mathfrak{N}$ on $\mathcal{L}_+ \otimes \mathbb{C}^n \otimes \mathcal{H}$ such that $U_\eta \phi(\eta) = \eta \otimes e_{i_0}\otimes \xi_0$ for some $i_0 \in \{1,\ldots, n\}$.  Since every element $T \in (\mathcal{A} \ast \mathbb{CF}_n) \odot \mathcal{B}$ acts on $\mathcal{L}_{-}$ via $(T \otimes I_{\mathbb{C}^n} \otimes I_{\mathcal{H}}) \oplus 0_{\mathcal{H} \otimes \mathcal{K}}$, $P_-$ is $(P_+ \otimes I_{\mathbb{C}^n} \otimes I_\mathcal{H}) \oplus 0_{(\mathcal{H} \otimes \mathcal{K})^{\oplus \ell}}$ so
\[
\begin{array}{rcl}
\sum^\ell_{i=1} \langle P_- \phi(\eta_i),\phi(\eta_i)\rangle_{\mathcal{L}_{-}^{\oplus \ell}}  &=& \sum^\ell_{i=1} \langle P_- U^*_\eta(\eta \otimes e_{i_0} \otimes \xi_0),U^*_\eta(\eta \otimes e_{i_0} \otimes \xi_0) \rangle_{\mathcal{L}_{-}^{\oplus \ell}}  \\
 &=& \sum^\ell_{i=1} \langle P_- (\eta \otimes e_{i_0} \otimes \xi_0),\eta \otimes e_{i_0} \otimes \xi_0 \rangle_{\mathcal{L}_{-}^{\oplus \ell}} \\
 &=&  \sum^\ell_{i=1} \langle P_+ \eta_i, \eta_i\rangle_{\mathcal{L}_{+}^{\oplus\ell}}
 \end{array} 
\]
as claimed.  Hence
\[
\sum_{\eta \in \Xi} \sum^\ell_{i=1} \left( \langle  P_+ \eta_i, \eta_i\rangle_{\mathcal{L}_{+}^{\oplus\ell}} - \langle P_- \phi(\eta_i),\phi(\eta_i)\rangle_{\mathcal{L}_{-}^{\oplus \ell}}  \right) = (\tau \ast \tau_{\mathbb{F}_n})_\ell(P_+).
\]
Thus the proof will be complete if the left-hand side of the above equation is in $\frac{1}{d}\mathbb{Z}$.
\par 
To begin we notice for all $\eta \in \Xi$ and $i \in \{1,\ldots, \ell\}$ that
\[
\begin{array}{rl}
  &\langle  P_+ \eta_i, \eta_i\rangle_{\mathcal{L}_{+}^{\oplus\ell}} - \langle P_- \phi(\eta_i),\phi(\eta_i)\rangle_{\mathcal{L}_{-}^{\oplus \ell}} \\
 = &\langle  P_{+,c} \eta_i, \eta_i\rangle_{\mathcal{L}_{+}^{\oplus\ell}} - \langle P_{-,c} \phi(\eta_i),\phi(\eta_i)\rangle_{\mathcal{L}_{-}^{\oplus \ell}}  \\
 &+ \langle  Q_+ \eta_i, \eta_i\rangle_{\mathcal{L}_{+}^{\oplus\ell}} - \langle Q_- \phi(\eta_i),\phi(\eta_i)\rangle_{\mathcal{L}_{-}^{\oplus \ell}} \\
 =&  0 + \langle  Q_+ \eta_i, \eta_i\rangle_{\mathcal{L}_{+}^{\oplus\ell}} - \langle Q_- \phi(\eta_i),\phi(\eta_i)\rangle_{\mathcal{L}_{-}^{\oplus \ell}}.
 \end{array} 
\]
Similarly, we obtain for all $\eta \in \Xi$, $i \in \{1,\ldots, \ell\}$, and $j \in \mathbb{N}$ that
\[
\begin{array}{rl}
 &\langle  P_{+,j} \eta_i, \eta_i\rangle_{\mathcal{L}_{+}^{\oplus\ell}} - \langle P_{-,j} \phi(\eta_i),\phi(\eta_i)\rangle_{\mathcal{L}_{-}^{\oplus \ell}} \\
 =& \langle  P_{+,c,j} \eta_i, \eta_i\rangle_{\mathcal{L}_{+}^{\oplus\ell}} - \langle P_{-,c,j} \phi(\eta_i),\phi(\eta_i)\rangle_{\mathcal{L}_{-}^{\oplus \ell}}  \\
 &+ \langle  Q_{+,j} \eta_i, \eta_i\rangle_{\mathcal{L}_{+}^{\oplus\ell}} - \langle Q_{-,j} \phi(\eta_i),\phi(\eta_i)\rangle_{\mathcal{L}_{-}^{\oplus \ell}} \\
 =&  0 + \langle  Q_{+,j} \eta_i, \eta_i\rangle_{\mathcal{L}_{+}^{\oplus\ell}} - \langle Q_{-,j} \phi(\eta_i),\phi(\eta_i)\rangle_{\mathcal{L}_{-}^{\oplus \ell}}
 \end{array} 
\]
Since $tr_{\mathfrak{M}\overline{\otimes} \mathfrak{N}}(P_{\pm, j}) = dim_{\mathfrak{M}\overline{\otimes} \mathfrak{N}}(A_\pm (\mathcal{L}_{\pm,j}^{\oplus \ell})) \in  \frac{1}{d}\mathbb{Z}$ for all $j \in \mathbb{N}$, and since $Q_{\pm, j}$ have finite $\mathfrak{M}\overline{\otimes} \mathfrak{N}$-rank (bounded by $dim_{\mathfrak{M}\overline{\otimes} \mathfrak{N}}(\mathcal{L}_{+,0}^{\oplus\ell})$), the following computation is valid:
\[
\begin{array}{l}
tr_{\mathfrak{M}\overline{\otimes} \mathfrak{N}}(Q_{+, j}) - tr_{\mathfrak{M}\overline{\otimes} \mathfrak{N}}(Q_{-, j})   \\
= \sum_{\eta \in \Xi}\sum^\ell_{i=1} \langle  Q_{+,j} \eta_i, \eta_i\rangle_{\mathcal{L}_{+}^{\oplus\ell}} - \langle Q_{-,j} \phi(\eta_i),\phi(\eta_i)\rangle_{\mathcal{L}_{-}^{\oplus \ell}}   \\
= \sum_{\eta \in \Xi}\sum^\ell_{i=1} \langle  P_{+,j} \eta_i, \eta_i\rangle_{\mathcal{L}_{+}^{\oplus\ell}} - \langle P_{-,j} \phi(\eta_i),\phi(\eta_i)\rangle_{\mathcal{L}_{-}^{\oplus \ell}} \\
=  tr_{\mathfrak{M}\overline{\otimes} \mathfrak{N}}(P_{+, j}) - tr_{\mathfrak{M}\overline{\otimes} \mathfrak{N}}(P_{-, j}) \in  \frac{1}{d}\mathbb{Z}.
 \end{array} 
\]
Therefore, since $Q_+$ and $Q_-$ have finite $\mathfrak{M}\overline{\otimes} \mathfrak{N}$-rank (specifically bounded above by $dim_{\mathfrak{M}\overline{\otimes} \mathfrak{N}}(\mathcal{L}_{+,0}^{\oplus\ell})$), we obtain that
\[
tr_{\mathfrak{M}\overline{\otimes} \mathfrak{N}}(Q_+) - tr_{\mathfrak{M}\overline{\otimes} \mathfrak{N}}(Q_-) = \lim_{j \to\infty} tr_{\mathfrak{M}\overline{\otimes} \mathfrak{N}}(Q_{+, j}) - tr_{\mathfrak{M}\overline{\otimes} \mathfrak{N}}(Q_{-, j}) \in \frac{1}{d}\mathbb{Z}.
\]
Hence
\[
\begin{array}{rcl}
((\tau_\mathfrak{M} \ast \tau_{\mathbb{F}_n}) \overline{\otimes} \tau_\mathfrak{N})_\ell(P_+)
&=&  \sum_{\eta \in \Xi} \sum^\ell_{i=1} \left( \langle  P_+ \eta_i, \eta_i\rangle_{\mathcal{L}_{+}^{\oplus\ell}} - \langle P_- \phi(\eta_i),\phi(\eta_i)\rangle_{\mathcal{L}_{-}^{\oplus \ell}}  \right) \\
&=&\sum_{\eta \in \Xi} \sum^\ell_{i=1} \left(\langle  Q_+ \eta_i, \eta_i\rangle_{\mathcal{L}_{+}^{\oplus\ell}} - \langle Q_- \phi(\eta_i),\phi(\eta_i)\rangle_{\mathcal{L}_{-}^{\oplus \ell}}\right) \\
&=& tr_{\mathfrak{M}\overline{\otimes} \mathfrak{N}}(Q_+) - tr_{\mathfrak{M}\overline{\otimes} \mathfrak{N}}(Q_-) \in  \frac{1}{d}\mathbb{Z}
 \end{array} 
\]
which completes the proof.  
\end{proof}

\section{Algebraic Cauchy Transforms of Polynomials in Semicircular Variables}
\label{sec:CauchyTransformOfSemicircularsIsAlgberaic}

In this section we will demonstrate that the Cauchy transform of any self-adjoint matricial polynomial of semicircular variables is algebraic (see Theorem \ref{cauchytransformofsemicircularsisalgebraic}).  Knowing that the Cauchy transform of a measure is algebraic provides information about the spectral distribution of operators as seen in Theorem \ref{mainresultinintro}.  To begin, we recall the notion of a formal power series in commuting variables.
\begin{defn}
Let $n \in \mathbb{N}$ and let $X = \{z_1, \ldots, z_n\}$.  For a ring $R$, a formal power series in commuting variables $X$ with coefficients in $R$ is a map $P : (\mathbb{N} \cup \{0\})^n \to R$ which we will write as
\[
P = \sum_{j=0}^n \sum_{k_j \geq 0} P(k_1, \ldots, k_n) z_1^{k_1} \cdots z_n^{k_n}.
\]
A formal power series $P$ is called a polynomial if $P(k_1, \ldots, k_n) = 0$ except for a finite number of $n$-tuples $(k_1, \ldots, k_n)$.  The set of all formal power series with coefficients in $R$ will be denoted $R[[ X ]]$ and the set of all polynomials with coefficients in $R$ will be denoted $R[ X ]$.  
\end{defn}
The set of formal power series over a ring $R$ can be given a ring structure.  Indeed, if addition on $R[[ X ]]$ is defined coordinate-wise and the product of $P, Q \in R[[X]]$ is defined via the rule
\[
(P+Q)(k_1, \ldots, k_n) = \sum_{j=0}^n \sum^{k_j}_{\ell_j=0} P(\ell_1, \ldots, \ell_n)Q(k_1 - \ell_1, \ldots, k_n - \ell_n),
\]
it is elementary to verify that $R[[ X]]$ is a ring.  Clearly $R[X]$ is a subring of $R[[X]]$ which enables us to construct the quotient field of $R[X]$.  The quotient field of $R[X]$ will be denoted $R(X)$. 
\par 
With the above definitions, we have the following definition essential to this section.
\begin{defn}
Let $n \in \mathbb{N}$, let $X = \{z_1, \ldots, z_n\}$, and let $R$ be an integral domain.  A formal power $P \in R[[X]]$ is said to be algebraic if there exists an $m \in \mathbb{N}$ and $\{q_j\}^m_{j=0} \subseteq R(X)$ not all zero such that
\[
\sum^m_{j=0} q_j P^j = 0.
\]
Equivalently, by clearing denominators, we can require $\{q_j\}^m_{j=0} \subseteq R[X]$.  The set of all algebraic elements of $R[[X]]$ is denoted $R_{\textrm{alg}}[[X]]$.
\end{defn}
Our main interest lies in demonstrating that certain formal power series relating to measures are algebraic.  Thus we recall the following definition.
\begin{defn}
Let $\mu$ be a compactly supported probability measure on $\mathbb{R}$.  The Cauchy transform of $\mu$, denoted $G_\mu$, is the function defined on $\{z \in \mathbb{C} \, \mid \, Im(z) > 0\}$ by
\[
G_\mu(z) = \int_\mathbb{R} \frac{1}{z-t} \, d\mu(t).
\]
\end{defn}
Notice for large enough $z$ it is clear that $G_\mu$ has a Laurent expansion that defines a formal power series in $\mathbb{C}[[ \{\frac{1}{z}\}]]$.  Thus it makes sense to ask whether $G_\mu$ is algebraic.
\par 
In order to state the main result of this section, we will need some additional notation.  Let $\mathfrak{M}$ be a finite von Neumann algebra with a faithful normal tracial state $\tau$.  Let $A \in \mathfrak{M}$ be a fixed self-adjoint operator.   Since $A$ is a self-adjoint element in a von Neumann algebra, for each $t \in \mathbb{R}$ let $E_A(t) \in \mathfrak{M}$ be the spectral projection of $A$ onto $(-\infty,t]$.  The spectral density function of $A$, denoted $F_A$, is the function on $[-\left\|A\right\|,\left\|A\right\|]$ defined by $F_A(t) = \tau(E_A(t))$.  Clearly $F_A$ is a right continuous function that is bounded above by 1.  In turn, $F_A$ defines the spectral measure of $A$, denoted $\mu_A$, by the equation
\[
\mu_A((t_1, t_2]) = F_A(t_2) - F_A(t_1).
\]
Notice that $\mu_A$ is a Borel probability measure supported on $[-\left\|A\right\|,\left\|A\right\|]$.  Recall the spectral measure has the unique property that if $f$ is a continuous function on the spectrum of $A$, then
\[
\tau(f(A)) = \int_0^{\left\|A\right\|} f(t) \, d\mu_A(t).
\]
\par 
With the above notation, we have the following important result which provides information about spectral distributions as indicated in Section \ref{sec:introduction}.
\begin{thm}
\label{cauchytransformofsemicircularsisalgebraic}
Let $n,\ell \in \mathbb{N}$, let $S_1, \ldots, S_n$ be freely independent semicircular variables, let $\mathcal{A}$ be the $*$-algebra generated by $S_1, \ldots, S_n$, and let $A \in \mathcal{M}_\ell(\mathcal{A})$ be a fixed self-adjoint operator.  The Cauchy transform of the spectral measure of $A$ is algebraic.
\end{thm}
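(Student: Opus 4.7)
The plan is to follow the Haar-unitary argument of \cite{Sa}*{Theorem 3.6} as the authors indicate, replacing the group-ring algebraicity input with an algebraicity statement tailored to semicirculars. By Lemma \ref{tracialmaptakesrationaltoalgebraic} (to be established earlier in the section), to conclude that $G_{\mu_A}$ is algebraic it suffices to show the following: the tracial map $T : \mathcal{M}_\ell(\mathcal{A})[[z]] \to \mathbb{C}[[z]]$ sending $\sum_k B_k z^k$ to $\sum_k \frac{1}{\ell}\tau_\ell(B_k) z^k$ carries rational formal power series to algebraic formal power series. Indeed, the matrix resolvent $(I_\ell - zA)^{-1}$ is a rational formal power series in $\mathcal{M}_\ell(\mathcal{A})[[z]]$, and $T((I_\ell - zA)^{-1}) = \sum_k \frac{1}{\ell}\tau_\ell(A^k) z^k$ is, after the substitution $z \mapsto 1/z$ and scaling by $1/z$, precisely $G_{\mu_A}(z)$.

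My next step is to reduce to an affine pencil. Using the Anderson linearization trick, any $A \in \mathcal{M}_\ell(\mathcal{A})$ can, after passing to a larger matrix algebra, be presented as a linear pencil $L = B_0 \otimes 1 + \sum_{j=1}^n B_j \otimes S_j$ with $B_j$ scalar matrices, in such a way that the $(1,1)$-block of $(zI - L)^{-1}$ records $(zI_\ell - A)^{-1}$. This move shows that verifying the desired image property of $T$ on all rational formal power series reduces to verifying it on the resolvents of such affine pencils, i.e. to establishing algebraicity of the entries of the operator-valued Cauchy transform of $L$. These entries are, up to trivial manipulation, values of a single universal formal power series in non-commuting variables $\{z_1,\ldots,z_n\}$ (representing the $S_j$) with matricial coefficients, evaluated under $\tau \otimes \mathrm{id}$.

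The main obstacle — and the step to be isolated as Lemma \ref{certainformalpowerseriesisalgebraic} — is therefore proving that this particular universal formal power series is algebraic. Here one uses the distinguishing analytic feature of free semicirculars: all their free cumulants vanish except $\kappa_2(S_j, S_j) = 1$. Translated into moment generating functions, the Schwinger--Dyson (matrix Dyson) equation for the operator-valued Cauchy transform $G(z) = (\tau \otimes \mathrm{id})((zI - L)^{-1})$ reads
\[
\Bigl(z I - B_0 - \sum_{j=1}^n B_j\, G(z)\, B_j\Bigr) G(z) = I,
\]
a \emph{polynomial} system in the finitely many entries of $G(z)$ with coefficients that are polynomials in $z$ and in the fixed scalar matrices $B_j$. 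Consequently each entry of $G(z)$ is algebraic over $\mathbb{C}(z)$, and in particular the $(1,1)$-block, whose trace is (up to the substitution) $G_{\mu_A}$, yields an algebraic formal power series.

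Finally, I would package these pieces: feed the algebraicity of this specific semicircular power series into Lemma \ref{tracialmaptakesrationaltoalgebraic} via the linearization reduction to conclude that $T$ maps rational formal power series to algebraic ones, and then apply the lemma to the rational resolvent $(I_\ell - zA)^{-1}$ to obtain that $G_{\mu_A}$ is algebraic. The delicate point throughout is the bookkeeping that keeps the non-commutative polynomial framework (where $\mathcal{M}_\ell(\mathcal{A})[[z]]$ lives) compatible with the commutative notion of algebraicity of the resulting scalar power series in $z$; the Schwinger--Dyson equation above is what makes that compatibility work, and it is the semicircular input that has no analogue for general normal variables.
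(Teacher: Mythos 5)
Your route is genuinely different from the paper's: there we reduce via Lemma \ref{tracialmaptakesrationaltoalgebraic} to non-commutative formal power series, prove that $P_{semi}=\sum_{w}\tau(w(S_1,\ldots,S_n))\,w$ satisfies the proper algebraic system $P_{semi}-e=\sum_{j}x_jP_{semi}x_jP_{semi}$ (Lemma \ref{certainformalpowerseriesisalgebraic}), invoke Sch\"utzenberger's theorem that the Hadamard product of a rational and an algebraic non-commutative series is algebraic, and finally specialize the letters to $1$. You instead linearize and use the matrix Dyson equation. Both arguments rest on the same semicircular input --- the trace recursion of Lemma \ref{splittingoftraceofwordslemma} is exactly what produces your Schwinger--Dyson equation --- but your packaging trades the Hadamard-product machinery for an algebraic-geometric issue that you have not addressed.

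The gap is the sentence ``Consequently each entry of $G(z)$ is algebraic over $\mathbb{C}(z)$.'' Satisfying a polynomial system $F(G,z)=0$ does not by itself make the coordinates of a particular solution branch algebraic: the graph of $z\mapsto G(z)$ could a priori lie on an irreducible component of the variety $\{F=0\}$ of dimension at least $2$, on which coordinate functions need not be algebraic over $\mathbb{C}(z)$. To close this you must show that the component containing the branch is one-dimensional, for instance by checking that the Jacobian $\partial F/\partial G$ is invertible at some point of the branch; this does hold near $z=\infty$, where $G(z)\sim z^{-1}I$ and $D_GF[H]\sim zH$, so the repair is available, but it is a necessary step and not a formality. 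A secondary structural point: to invoke Lemma \ref{tracialmaptakesrationaltoalgebraic} you must treat \emph{every} element of $\mathcal{A}_{\textrm{rat}}[[\{z\}]]$, and the claim that all of these arise from resolvents of affine pencils is the full realization theorem for non-commutative rational series, not Anderson's trick for a single polynomial; it would be cleaner to bypass the lemma and linearize the fixed $A$ directly, reading $G_{\mu_A}$ off the $(1,1)$ corner of the pencil resolvent (noting that the standard linearization places $z$ only in the $(1,1)$ entry of $\Lambda(z)$, not on the whole diagonal).
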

In order to prove Theorem \ref{cauchytransformofsemicircularsisalgebraic} we will mimic the proof of \cite{Sa}*{Theorem 3.6} which proves said result when $S_1, \ldots, S_n$ are replaced with freely independent Haar unitaries.  In order to mimic the proof in \cite{Sa}, we recall another type of formal power series in commuting variables.
\begin{defn}
Let $S$ be a ring and let $R$ be a subring of $S$.  It is said that $R$ is rationally closed in $S$ if for every matrix with entries in $R$ which is invertible when viewed as a matrix with entries in $S$, the entries of the inverse lies in $R$.
\par 
The rational closure of $R$ in $S$, denoted $\mathcal{R}(R \subseteq S)$, is the smallest subring of $S$ containing $R$ that is rationally closed. 
\par 
For an arbitrary ring $R$ and finite set $X$, the rational closure $\mathcal{R}(R[ X ] \subseteq R[[X]])$ is called the ring of rational power series over $R$ and is denoted $R_{\textrm{rat}}[[ X ]]$.  
\end{defn}
It turns out that the key to showing the Cauchy transform $G_{\mu_A}$ is algebraic for all positive matrices $A$ with entries in a tracial $*$-algebra is intrinsically related to the following map.
\begin{defn}
Let $\mathfrak{M}$ be a finite von Neumann algebra with faithful, normal, tracial state $\tau$.  The tracial map on formal power series in one variable is the map $Tr_\mathfrak{M} : \mathfrak{M}[[\{z\} ]] \to \mathbb{C}[[\{z\} ]]$ defined by
\[
Tr_\mathfrak{M}\left( \sum_{n\geq 0} T_n z^n\right) = \sum_{n\geq 0} \tau(T_n) z^n.
\]
\end{defn}
In particular, the beginning of the proof of \cite{Sa}*{Theorem 3.6} demonstrates the following.
\begin{lem}
\label{tracialmaptakesrationaltoalgebraic}
Let $\mathfrak{M}$ be a finite von Neumann algebra with faithful, normal, tracial state $\tau$ and let $\mathcal{A}$ be a subalgebra of $\mathfrak{M}$.  If
\[
Tr_\mathfrak{M}(\mathcal{A}_{\textrm{rat}}[[ \{z\} ]]) \subseteq \mathbb{C}_{\textrm{alg}}[[ \{z\} ]],
\]
then the Cauchy transform $G_{\mu_A}$ is algebraic for every positive matrix $A \in \mathcal{M}_\ell(\mathcal{A})$ and any $\ell \in \mathbb{N}$.
\end{lem}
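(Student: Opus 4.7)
The plan is to express $G_{\mu_A}$ as the image under $Tr_\mathfrak{M}$ of the diagonal entries of a formal resolvent, identify those entries as rational power series, and then invoke the hypothesis on $Tr_\mathfrak{M}$.

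Setting $w := 1/z$, for $|z| > \left\|A\right\|$ one has the analytic identity
\[
G_{\mu_A}(z) = \frac{1}{\ell}\tau_\ell\bigl((zI-A)^{-1}\bigr) = \frac{w}{\ell}\sum_{i=1}^{\ell} \tau\bigl([(I-wA)^{-1}]_{i,i}\bigr),
\]
where the first equality uses Borel functional calculus on $A \in \mathcal{M}_\ell(\mathcal{A}) \subseteq \mathcal{M}_\ell(\mathfrak{M})$ together with the convention that the normalized trace on $\mathcal{M}_\ell(\mathfrak{M})$ is $\frac{1}{\ell}\tau_\ell$, and the second follows by factoring $1/z$ out of the resolvent. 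Both sides agree as formal power series in $w$ since the coefficient of $w^{n+1}$ on each side equals $\frac{1}{\ell}\tau_\ell(A^n) = \int t^n\,d\mu_A(t)$ by the moment formula for $\mu_A$.

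Next, I will interpret $(I-wA)^{-1}$ directly as the formal geometric series $\sum_{n\geq 0} w^n A^n$ in $\mathcal{M}_\ell(\mathcal{A}[[\{w\}]])$. The matrix $I - wA$ has entries in $\mathcal{A}[\{w\}]$ and constant term $I$, so it is invertible in $\mathcal{M}_\ell(\mathcal{A}[[\{w\}]])$. By the very definition of the rational closure $\mathcal{A}_{\textrm{rat}}[[\{w\}]] = \mathcal{R}(\mathcal{A}[\{w\}] \subseteq \mathcal{A}[[\{w\}]])$, every entry of $(I-wA)^{-1}$ lies in $\mathcal{A}_{\textrm{rat}}[[\{w\}]]$. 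Applying the standing hypothesis then yields $Tr_\mathfrak{M}\bigl([(I-wA)^{-1}]_{i,i}\bigr) \in \mathbb{C}_{\textrm{alg}}[[\{w\}]]$ for each $i \in \{1,\ldots,\ell\}$.

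To finish, I will use the classical fact that the set $\mathbb{C}_{\textrm{alg}}[[\{w\}]]$ of algebraic formal power series over $\mathbb{C}$ is a subring of $\mathbb{C}[[\{w\}]]$ containing $\mathbb{C}[\{w\}]$. Thus the sum $\frac{w}{\ell}\sum_{i=1}^{\ell} Tr_\mathfrak{M}\bigl([(I-wA)^{-1}]_{i,i}\bigr)$ is algebraic, and re-substituting $w = 1/z$ identifies this series with $G_{\mu_A} \in \mathbb{C}_{\textrm{alg}}[[\{1/z\}]]$, completing the proof. The only mildly subtle point is reconciling the analytic Cauchy transform with its formal interpretation under $w = 1/z$, which amounts to a moment-matching bookkeeping; the ring closure of algebraic power series is standard and poses no real obstacle.
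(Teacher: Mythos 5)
Your proposal is correct and follows essentially the same route as the paper's proof: identify the entries of the formal resolvent $(I-wA)^{-1}$ as elements of $\mathcal{A}_{\textrm{rat}}[[\{w\}]]$ via the definition of rational closure, apply $Tr_\mathfrak{M}$ and the hypothesis, and recognize the resulting algebraic series as $G_{\mu_A}$ after the substitution $w = 1/z$. The only cosmetic difference is your use of the normalized trace $\frac{1}{\ell}\tau_\ell$, which is immaterial for algebraicity.
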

\begin{proof}
As in the proof of \cite{Sa}*{Theorem 3.6}, for an arbitrary $\ell \in \mathbb{N}$ and positive matrix $A \in \mathcal{M}_\ell(\mathcal{A})$, the entries of $z(I_{\mathcal{M}_\ell(\mathcal{A})} - Az)^{-1}$ (which can be viewed as an element of $\mathcal{M}_\ell(\mathcal{A})[[\{z\}]]$ by expanding the result when $\left\|A\right\||z| < 1$) lie in the rational closure $\mathcal{A}_{\textrm{rat}}[[ \{z\}]]$.  By assumption, the formal power series
\[
q(z) := Tr_{\mathcal{M}_\ell(\mathfrak{M})}\left(z(I_{\mathcal{M}_n(\mathcal{A})} - Az)^{-1}\right) = \sum^\ell_{j=1} Tr_\mathfrak{M}((z(I_{\mathcal{M}_\ell(\mathcal{A})} - Az)^{-1})_{jj})
\]
is an element of $\mathbb{C}_{\textrm{alg}}[[ \{z\} ]]$.  Thus $q(z^{-1})$ is an element of $\mathbb{C}_{\textrm{alg}}[[ \{\frac{1}{z}\}]]$.  If $\tau_{\mathcal{M}_\ell(\mathfrak{M})}$ is the canonical trace on $\mathcal{M}_\ell(\mathfrak{M})$, it is well-known that 
\[
G_{\mu_A}(z) = \tau_{\mathcal{M}_\ell(\mathfrak{M})}((zI_{\mathcal{M}_n(\mathcal{A})} - A)^{-1}) = q(z^{-1})
\]
in the domain $\{z \in \mathbb{C} \, \mid \, Im(z) > 0, |z| > \left\|A\right\|\}$.  Hence $G_{\mu_A} \in \mathbb{C}_{\textrm{alg}}[[ \{\frac{1}{z}\}]]$ as desired.
\end{proof}
Thus the proof of Theorem \ref{cauchytransformofsemicircularsisalgebraic} will be complete provided the assumptions of Lemma \ref{tracialmaptakesrationaltoalgebraic} can be verified.  Following \cite{Sa}, it is necessary to examine formal power series in non-commuting variables.
\begin{defn}
Let $X$ be a finite set (which will be called an alphabet) and let $W(X)$ denote the set of all words with letters in $X$.  The empty word will be denoted by $e$.  For a ring $R$, a formal power series with non-commuting variables $X$ with coefficients in $R$ is a map $P : W(X) \to R$ which we will write as
\[
P = \sum_{w \in W(X)} P(w) w.
\]
A formal power series $P$ is called a polynomial $P(w) = 0$ except for a finite number of words $w \in W(X)$.  The set of all formal power series with coefficients in $R$ will be denoted $R\langle\langle X\rangle \rangle$ and the set of all polynomials with coefficients in $R$ will be denoted $R\langle X\rangle$. 
\end{defn}
The set of formal power series over a ring $R$ can be given a ring structure.  Indeed, if addition on $R\langle \langle X\rangle \rangle$ is defined coordinate-wise, and multiplication is defined via the rule
\[
\left( \sum_{w \in W(X)} P(w) w\right) \cdot \left(\sum_{w \in W(X)} Q(w)w\right) = \sum_{w \in W(X)} \left(\sum_{u,v \in W(X), uv = w} P(u)Q(v)\right)w
\]
(notice that for each $w \in W(X)$ there are a finite number of pairs $u,v \in W(X)$ such that $w = uv$), it is elementary to verify that $R\langle \langle X\rangle \rangle$ is a ring.  Thus it makes sense to consider the rational closure of $R\langle X\rangle$ inside $R\langle \langle X\rangle \rangle$ which will be denoted $R_{\textrm{rat}}\langle \langle X\rangle \rangle$.
\par 
As with formal power series in commuting variables, there is a notion of an algebraic formal power series in non-commuting variables.  The definition of such a formal power series is more technical than in the commutative case and is based on the following definition.
\begin{defn}[Sch\"{u}tzenberger]
Let $X := \{x_1,\ldots, x_n\}$ be an alphabet and let $Z := \{z_1,\ldots, z_m\}$ be an alphabet disjoint from $X$.  A proper algebraic system over a ring $R$ is a set of equations $z_i = p_i(x_1, \ldots, x_n, z_1, \ldots, z_m)$ for $i \in \{1,\ldots, m\}$ where each $p_i$ is an element of $R\langle X \cup Z\rangle$ that has no constant term nor term of the form $\alpha z_j$ where $\alpha \in R$ and $j \in \{1,\ldots, n\}$.
\par 
A solution to a proper algebraic system is an $m$-tuple $(P_1, \ldots, P_m) \in R\langle\langle X\rangle \rangle^m$ such that $P_j(e) = 0$ and $p_j(x_1, \ldots, x_n, P_1, \ldots, P_m) = P_j$ for all $j \in \{1,\ldots, m\}$.
\end{defn}
\begin{defn}
A formal power series $P \in R\langle\langle X\rangle \rangle$ is said to be algebraic if $P - P(e) e$ is a component of the solution of a proper algebraic system.  The set all algebraic formal power series in $R\langle\langle X\rangle \rangle$ will be denoted by $R_{\textrm{alg}}\langle\langle X\rangle \rangle$.
\end{defn}
In order to prove the assumptions of Lemma \ref{tracialmaptakesrationaltoalgebraic} hold in the context of Theorem \ref{cauchytransformofsemicircularsisalgebraic}, the proof of \cite{Sa}*{Theorem 2.19(ii)} will be mimicked.  To do so, it is necessary to show that a certain formal power series in non-commuting variables is algebraic.  The following formula involving traces of words of semicircular variables plays a crucial role.
\begin{lem}[See \cite{Vo5}*{Section 3}]
\label{splittingoftraceofwordslemma}
Let $n \in \mathbb{N}$, let $S_1, \ldots, S_n$ be freely independent semicircular variables (with second moments 1), let $\mathcal{A}$ be the $*$-algebra generated by $S_1, \ldots, S_n$, let $\tau$ be the canonical trace on $\mathcal{A}$, and let $X := \{x_1, \ldots, x_n\}$ be an alphabet.  For each $j \in \{1,\ldots, n\}$ and $w \in W(X)$, 
\[
\tau(S_j w(S_1, \ldots, S_n)) = \sum_{u,v \in W(X), w = ux_jv} \tau(u(S_1, \ldots, S_n)) \tau(v(S_1,\ldots, S_n))
\]
where, for a word $w_0 \in W(X)$, $w_0(S_1, \ldots, S_n)$ is the element of $\mathcal{A}$ obtained by substituting $S_i$ for $x_i$.
\end{lem}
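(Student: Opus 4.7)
The plan is to deduce the identity from the Wick/non-crossing pair partition formula for free semicircular variables. Write $w = x_{i_1} \cdots x_{i_k}$, set $i_0 := j$, and recall that since the only non-vanishing free cumulants of $S_1,\ldots,S_n$ are $\kappa_2(S_a, S_a) = 1$ (mixed cumulants vanishing by free independence), the moment-cumulant formula yields
\[
\tau(S_{i_0} S_{i_1} \cdots S_{i_k}) = \sum_{\pi \in NC_2(\{0,1,\ldots,k\})} \prod_{\{a,b\} \in \pi} \delta_{i_a, i_b},
\]
with the convention that the sum is zero when $k+1$ is odd. I would take this formula as known (it is \cite{VDN} or the content of the reference \cite{Vo5}*{Section 3}).

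Next, I would analyze the right-hand side by partitioning the sum according to the block of $\pi$ containing the index $0$. Such a block must be a pair $\{0, p\}$ with $i_p = j$, which is precisely the condition that the $p$-th letter of $w$ equals $x_j$, i.e., $w = u x_j v$ where $u = x_{i_1}\cdots x_{i_{p-1}}$ and $v = x_{i_{p+1}}\cdots x_{i_k}$. The non-crossing hypothesis on $\pi$ forces the remaining pairs of $\pi$ to lie either entirely within $\{1,\ldots,p-1\}$ or entirely within $\{p+1,\ldots,k\}$. Consequently, the pair partitions $\pi$ containing $\{0,p\}$ are in canonical bijection with pairs $(\pi_u, \pi_v) \in NC_2(\{1,\ldots,p-1\}) \times NC_2(\{p+1,\ldots,k\})$.

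Splitting the product $\prod_{\{a,b\}\in \pi} \delta_{i_a,i_b}$ accordingly and re-applying the Wick formula to each factor gives
\[
\sum_{\pi \supseteq \{0,p\}} \prod_{\{a,b\}\in \pi} \delta_{i_a,i_b} = \tau(u(S_1,\ldots,S_n))\,\tau(v(S_1,\ldots,S_n)).
\]
Summing over all admissible splittings $w = u x_j v$ yields the claimed identity. The only point requiring any care is verifying the bijection between non-crossing pair partitions of $\{0,\ldots,k\}$ having $\{0,p\}$ as a block and pairs of non-crossing pair partitions on the two resulting intervals, which is standard and follows immediately from the definition of non-crossing. No serious obstacle arises once the Wick formula and the non-crossing decomposition are in hand.
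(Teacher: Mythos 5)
The paper does not supply its own proof of this lemma; it is stated with a citation to \cite{Vo5}*{Section 3}, so there is no in-paper argument to compare against. Your derivation is correct and self-contained modulo the free Wick formula, which you are entitled to take as known. The key steps all hold up: since $\pi$ is a pair partition, the block containing $0$ is some $\{0,p\}$, and the factor $\delta_{i_0,i_p}=\delta_{j,i_p}$ kills every term unless the $p$-th letter of $w$ is $x_j$, so the surviving choices of $p$ are in bijection with the decompositions $w = u x_j v$; the non-crossing condition then forces the remaining pairs of $\pi$ to lie entirely inside $\{1,\dots,p-1\}$ or entirely inside $\{p+1,\dots,k\}$, giving the product decomposition $NC_2(\{1,\dots,p-1\})\times NC_2(\{p+1,\dots,k\})$; and a second application of the Wick formula identifies the two resulting sums as $\tau(u(S_1,\dots,S_n))$ and $\tau(v(S_1,\dots,S_n))$. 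Summing over $p$ is exactly summing over $w=ux_jv$, which is the claim. This is one of the standard ways to see that $\tau(S_j\,\cdot\,)=(\tau\otimes\tau)\circ\partial_j$ on the polynomial algebra; whether or not it coincides with Voiculescu's exposition, the argument as you have written it is complete.
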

\begin{lem}
\label{certainformalpowerseriesisalgebraic}
With the notation as in Lemma \ref{splittingoftraceofwordslemma}, the formal power series $P_{semi} \in \mathbb{C}\langle \langle X\rangle \rangle$ defined by
\[
P_{semi} := \sum_{w \in W(X)} \tau(w(S_1,\ldots, S_n)) w
\]
is algebraic.
\end{lem}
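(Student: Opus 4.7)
The plan is to use the moment recursion of Lemma \ref{splittingoftraceofwordslemma} to derive a quadratic self-referential identity for $P := P_{semi}$, from which $P_{semi} - P_{semi}(e)\, e$ will manifestly be the solution of a proper algebraic system in one unknown.

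First, I would split off the empty word and group non-empty words by their first letter. Every non-empty $w \in W(X)$ decomposes uniquely as $w = x_j w'$, and because scalars in $\mathbb{C}$ commute past the non-commuting variables,
\[
P = e + \sum_{j=1}^n x_j \sum_{w' \in W(X)} \tau(S_j w'(S_1, \ldots, S_n)) \, w'.
\]
Next, I would apply Lemma \ref{splittingoftraceofwordslemma} to each inner coefficient and swap the order of summation, so that the inner sum factors:
\[
\sum_{w' \in W(X)} \tau(S_j w'(S_1,\ldots,S_n)) \, w' = \sum_{u,v \in W(X)} \tau(u(S))\tau(v(S)) \, u x_j v = P \cdot x_j \cdot P
\]
as an identity in $\mathbb{C}\langle\langle X \rangle\rangle$. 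Substituting back yields the functional equation
\[
P = e + \sum_{j=1}^n x_j P x_j P.
\]

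Finally, setting $R := P - e$ (note $P_{semi}(e) = \tau(1) = 1$) and expanding $(e + R)x_j(e+R)$ gives
\[
R = \sum_{j=1}^n \bigl( x_j^2 + x_j^2 R + x_j R x_j + x_j R x_j R \bigr).
\]
Let $p(x_1, \ldots, x_n, z)$ denote the right-hand side with $R$ replaced by a fresh variable $z$. Every monomial of $p$ contains at least one factor from $X$, so $p$ has no constant term and no term of the form $\alpha z$; hence $\{z = p(x_1, \ldots, x_n, z)\}$ is a proper algebraic system of which $R$ is a solution, which is precisely what it means for $P_{semi}$ to be algebraic.

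The only non-formal input is Lemma \ref{splittingoftraceofwordslemma}, which encodes the vanishing of free cumulants of a semicircular variable past the second; the main obstacle is simply recognizing that this vanishing delivers exactly the factorization $Q_j = P x_j P$ needed to make the recurrence for $P$ quadratic (hence algebraic rather than merely rational). This is also why the argument is specifically tailored to the semicircular setting and would not directly extend to arbitrary free variables.
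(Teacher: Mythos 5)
Your proof is correct and follows essentially the same route as the paper: both derive the quadratic identity $P_{semi} - e = \sum_{j=1}^n x_j P_{semi} x_j P_{semi}$ from the trace-splitting formula and then read off the proper algebraic system $z = \sum_{j=1}^n (x_j z x_j z + x_j^2 z + x_j z x_j + x_j^2)$. You simply spell out the expansion of $(e+R)x_j(e+R)$ that the paper leaves as "elementary to verify."
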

\begin{proof}
By Lemma \ref{splittingoftraceofwordslemma} we easily obtain that
\[
\begin{array}{rcl}
 && P_{semi} - e  \\
 &=& \sum^n_{j=1} \sum_{w \in W(X)} \tau(S_j w(S_1, \ldots, S_n)) x_j w  \\
 &=& \sum^n_{j=1} \sum_{w, u,v \in W(X), w=ux_jv} \tau(u(S_1, \ldots, S_n)) \tau(v(S_1, \ldots, S_n))x_j ux_jv \\
 &=&  \sum^n_{j=1} \sum_{u,v \in W(X)}  \tau(u(S_1, \ldots, S_n)) \tau(v(S_1, \ldots, S_n))x_j ux_jv \\
 &=&  \sum^n_{j=1} x_j P_{semi} x_j P_{semi}.
 \end{array} 
\]
Hence it is elementary to verify that $P_{semi} - e$ is a solution to the proper algebraic system
\[
z = \sum^n_{j=1} x_j z x_j z + x_j^2 z + x_j z x_j + x_j^2.
\]
Thus $P_{semi}$ is algebraic by definition.
\end{proof}
Using Lemma \ref{certainformalpowerseriesisalgebraic} it is easy to verify the proof of \cite{Sa}*{Theorem 2.19(ii)} generalizes enough to complete the proof of Theorem \ref{cauchytransformofsemicircularsisalgebraic}.  We will only sketch the changes to the proof of \cite{Sa}*{Theorem 2.19(ii)} as it nearly follows verbatim.
\begin{proof}[Proof of Theorem \ref{cauchytransformofsemicircularsisalgebraic}]
Let $\mathfrak{M}$ be the von Neumann algebra generated by $S_1, \ldots, S_n$.  By Lemma \ref{tracialmaptakesrationaltoalgebraic} it suffices to show that the tracial map on formal power series $Tr_\mathfrak{M} : \mathfrak{M}[[\{z\} ]] \to \mathbb{C}[[ \{z\} ]]$ has the property that
\[
Tr_\mathfrak{M}(\mathcal{A}_{\textrm{rat}}[[ \{z\} ]]) \subseteq \mathbb{C}_{\textrm{alg}}[[ \{z\} ]].
\]
Let $S := \{x_1,\ldots, x_n\}$ be an alphabet.  As in the proof of \cite{Sa}*{Theorem 2.19(ii)}, there is a canonical way to view
\[
(\mathbb{C}\langle S\rangle)_{\textrm{rat}}[[ \{z\} ]] \subseteq (\mathbb{C}(z))_{\textrm{rat}}\langle\langle S\rangle\rangle .
\]
\par 
Consider the injective homomorphisms $\pi : W(S) \to \mathcal{A}$ uniquely defined by $\pi(x_j) = S_j$ for all $j \in \{1,\ldots, n\}$.  Clearly $\pi$ extends to a homomorphism $\pi : \mathbb{C}\langle S\rangle \to \mathcal{A}$ and thus also extends to a homomorphism $\pi : (\mathbb{C}\langle S\rangle)[[ \{z\}]] \to \mathcal{A}[[ \{z\}]]$ by applying $\pi$ coordinate-wise.
\par 
Let $P \in \mathcal{A}_{\textrm{rat}}[[\{z\} ]]$ be arbitrary.  Using algebraic properties, the proof of \cite{Sa}*{Theorem 2.19(ii)} implies that 
\[
P \in \pi\left(  (\mathbb{C}\langle S\rangle)_{\textrm{rat}}[[ \{z\}]]  \right).
\]
Choose $\overline{P} \in (\mathbb{C}\langle S\rangle)_{\textrm{rat}}[[ \{z\}]] \subseteq (\mathbb{C}(z))_{\textrm{rat}}[[ S]] $ such that $\pi(\overline{P}) = P$.  Recall that 
\[
P_{semi} := \sum_{w \in W(S)} \tau(w(S_1,\ldots, S_n)) w \in \mathbb{C}_{\textrm{alg}}\langle \langle S\rangle \rangle \subseteq (\mathbb{C}(z))_{\textrm{alg}}\langle\langle S\rangle\rangle
\]
by Lemma \ref{certainformalpowerseriesisalgebraic}.  Hence the Haadamard Product
\[
\overline{P} \odot P_{semi} := \sum_{w \in W(S)} \overline{P}(w) P_{semi}(w) w = \sum_{w \in W(S)} \tau(w(S_1,\ldots, S_n)) \overline{P}(w) w
\] 
is an element of $(\mathbb{C}(z))_{\textrm{alg}}\langle\langle S\rangle\rangle$ by a theorem of Sch\"{u}tzenberger from \cite{Sch}.
\par
Since $\overline{P} \odot P_{semi} \in (\mathbb{C}(z))_{\textrm{alg}}\langle\langle S\rangle\rangle$, if we substitute $1 \in \mathbb{C}$ for every element of $S$ we obtain a well-defined power series in $\mathbb{C}[[\{z\}]]$.  Indeed if 
\[
P = \sum_{m\geq 0} p_m(S_1, \ldots, S_n) z^m
\]
for some non-commutative polynomials $p_m$ in $n$ variables, then 
\[
\overline{P} =  \sum_{m\geq 0} (p_m(x_1, \ldots, x_n) + q_m(x_1,\ldots, x_n))z^m 
\]
for some non-commutative polynomials $q_m$ in $n$ variables such that $q_m(S_1, \ldots, S_n) = 0$.  Hence
\[
\overline{P} \odot P_{semi} = \sum_{w \in W(S)} \tau(w(S_1,\ldots, S_n)) \left(\sum_{m\geq 0} (coef(p_m, w) + coef(q_m, w))z^m  \right)w
\]
where $coef(p, w)$ is the element of $\mathbb{C}$ that is the coefficient of $w$ in $p$.  Therefore, by replacing each $w$ with the scalar $1$, we obtain 
\[
\begin{array}{rcl}
 && \sum_{w \in W(S)} \tau(w(S_1,\ldots, S_n)) \left(\sum_{m\geq 0} (coef(p_m, w) + coef(q_m, w))z^m  \right)  \\
 &=&  \sum_{m\geq 0}   \tau\left( \sum_{w \in W(S)} (coef(p_m, w) + coef(q_m, w)) w(S_1,\ldots, S_n)\right) z^m  \\
  &=&  \sum_{m\geq 0}   \tau\left( p_m(S_1, \ldots, S_n) + q_m(S_1, \ldots, S_n) \right) z^m  \\
 &=& \sum_{m\geq 0}\tau(p_m(S_1,\ldots, S_n))z^m = Tr_\mathfrak{M}(P) 
 \end{array} 
\]
as desired.  Thus the proof of \cite{Sa}*{Theorem 2.19(ii)} implies that $Tr_\mathfrak{M}(P)$ is an element of $\mathbb{C}_{\textrm{alg}}[[ \{z\}]]$ as desired.
\end{proof}
With the proof of Theorem \ref{cauchytransformofsemicircularsisalgebraic} complete, we turn our attention to further information that Sauer's results from \cite{Sa} imply.  The main purpose of \cite{Sa} was to show the rationality and positivity of the Novikov-Shubin invariant for matrices with entries in the group algebra of a virtually free group.  In particular, the Novikov-Shubin invariants are well-defined for any finite, tracial von Neumann algebra.
\begin{defn}
\label{NokikovShubinInvariant}
Let $\mathfrak{M}$ be a finite von Neumann algebra with faithful, normal, tracial state $\tau$.  For a positive operator $A \in \mathfrak{M}$ with spectral distribution $F_A$, the Novikov-Shubin invariant $\alpha(A) \in [0,\infty] \cup \{\infty^+\}$ of $A$ is defined as
\[
\alpha(A) : = \left\{
\begin{array}{ll}
\liminf_{t \to 0^+} \frac{\ln(F_A(t) - F_A(0))}{\ln(t)} & \mbox{if } F_A(t) > F_A(0) \mbox{ for all }t >0 \\
\infty^+ & \mbox{otherwise}
\end{array} \right. .
\]
\end{defn}
For a positive operator $A$ in a finite von Neumann algebra $\mathfrak{M}$, it is easy to see that $\alpha(A) = \infty^+$ implies that zero is isolated in the spectrum of $A$.  Furthermore, if $\alpha(A) = \lambda \in [0,\infty)$, then $F_A(t) - F_A(0)$ behaves like $t^\lambda$ as $t$ tends to zero. 
\par
The Novikov-Shubin invariants are of interest in the context of Theorem \ref{cauchytransformofsemicircularsisalgebraic} due to the following result which is directly implied by the proof of \cite{Sa}*{Theorem 3.6}.
\begin{lem}[See \cite{Sa}*{Theorem 3.6} for a proof]
\label{algebraiccauchytransformimpliestheNSinvariantisgood}
Let $\mathfrak{M}$ be a finite von Neumann algebra with faithful, normal, tracial state $\tau$.  Let $A \in \mathfrak{M}$ be a positive operator and let $\mu_A$ is the spectral measure of $A$.  If the Cauchy transform $G_{\mu_A}$ is algebraic, then the Novikov-Shubin invariant $\alpha(A)$ is a non-zero rational number or $\infty^+$.
\end{lem}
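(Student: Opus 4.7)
The plan is to extract the Novikov--Shubin invariant of $A$ from the Puiseux expansion of the algebraic function $G_{\mu_A}$ at $z=0$. First I would reduce to the case where $\mu_A$ has no atom at $0$. If $F_A(0)=m>0$, then $G_{\mu_A}(z)=-m/z+G_\nu(z)$ where $\nu:=\mu_A-m\delta_0$ is a positive measure supported in $(0,\|A\|]$; since rational functions are algebraic and the algebraic elements of $\mathbb{C}[[\{1/z\}]]$ form a field, $G_\nu$ is still algebraic. The definition of $\alpha(A)$ involves only $F_A(t)-F_A(0)$, so this atom may be discarded.

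Next, since $G_{\mu_A}$ is algebraic and holomorphic on $\mathbb{C}\setminus[0,\|A\|]$, the Newton--Puiseux theorem furnishes integers $N\geq 1$ and $k_0\in\mathbb{Z}$, and complex coefficients $(c_k)_{k\geq k_0}$ with $c_{k_0}\neq 0$, such that
\[
G_{\mu_A}(z)=\sum_{k\geq k_0}c_k\, z^{k/N}
\]
convergently in a punctured neighborhood of $0$ slit along $\{-it:t\geq 0\}$ (as in part (\ref{intromainresult6}) of Theorem \ref{mainresultinintro}), with $z^{1/N}$ the corresponding branch. The key observation is that $(t+i0^+)^{1/N}\to t^{1/N}>0$ for $t>0$, so by the Stieltjes inversion formula the density of the absolutely continuous part of $\mu_A$ at $t>0$ equals
\[
\frac{1}{\pi}\,\mathrm{Im}(G_{\mu_A}(t+i0^+))=\frac{1}{\pi}\sum_{k\geq k_0}\mathrm{Im}(c_k)\,t^{k/N}.
\]
If every $\mathrm{Im}(c_k)=0$, then the absolutely continuous part vanishes near $0$, and since the Puiseux expansion would also detect any residual pole of $G_{\mu_A}$, there is no mass of $\mu_A$ in a deleted neighborhood of $0$; thus $F_A(t)-F_A(0)=0$ for small $t>0$ and $\alpha(A)=\infty^+$ by definition. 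Otherwise let $k_1$ be the least $k$ with $\mathrm{Im}(c_k)\neq 0$. Integrability of the density forces $k_1/N>-1$, and integration of the leading term yields
\[
F_A(t)-F_A(0)\sim\frac{\mathrm{Im}(c_{k_1})}{\pi(k_1/N+1)}\,t^{k_1/N+1}
\]
as $t\to 0^+$, whence $\alpha(A)=k_1/N+1$, a positive rational number.

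The main obstacle will be ensuring that the Puiseux series accounts for all of the mass of $\mu_A$ near $0$, not merely the absolutely continuous part: one must rule out an accumulation of atoms or a singular continuous piece at $0$, which would require a non-meromorphic singularity incompatible with the algebraic relation satisfied by $G_{\mu_A}$. A second technical issue is the careful choice of branch in the Puiseux expansion so that the Stieltjes boundary values from the upper half plane are computed correctly; the slit along $\{-it:t\geq 0\}$ used in part (\ref{intromainresult6}) of Theorem \ref{mainresultinintro} is tailored for exactly this. Finally, the strict positivity $\alpha(A)>0$ in the finite case is automatic: after removing the atom at $0$, the expansion has no $z^{-1}$ term, so $k_1/N>-1$ suffices and in fact one obtains $\alpha(A)\geq 1/N>0$.
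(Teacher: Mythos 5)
The paper offers no proof of this lemma, deferring entirely to \cite{Sa}*{Theorem 3.6}, and your argument is essentially the one found there: expand the algebraic function $G_{\mu_A}$ in a Puiseux series at $0$, apply Stieltjes inversion, and read off $\alpha(A)=k_1/N+1\in\mathbb{Q}_{>0}$ (or $\infty^+$) from the first term with nonzero imaginary part. The argument is sound --- the obstacle you flag, that algebraicity must exclude singular continuous mass or accumulating atoms near $0$, is exactly what \cite{AZ}*{Theorem 2.9}, i.e.\ part (\ref{intromainresult6}) of Theorem \ref{mainresultinintro}, supplies --- modulo two harmless sign slips: $G_{m\delta_0}(z)=+m/z$, and Stieltjes inversion gives the density as $-\frac{1}{\pi}\,\mathrm{Im}\,G_{\mu_A}(t+i0^{+})$.
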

The Novikov-Shubin invariants are of interest in terms of determining the decay of the spectral density function at zero due to the following result.
\begin{lem}[See \cite{Luck}*{Theorem 3.14(4)}]
\label{finiteNSimpliesfiniteintegral}
Let $\mathfrak{M}$ be a finite von Neumann algebra with faithful, normal, tracial state $\tau$.  If $A \in \mathfrak{M}$ is a positive operator and $F_A$ is the spectral density function of $A$, then
\[
\lim_{\epsilon \to 0} \int^{\left\|A\right\|}_\epsilon \frac{1}{t}(F_A(t) - F_A(0))\, dt < \infty
\]
provided $\alpha(A) \neq 0$.
\end{lem}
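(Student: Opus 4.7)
The plan is to split into the two cases allowed by Definition \ref{NokikovShubinInvariant} and reduce the statement to an elementary calculus estimate on $F_A(t) - F_A(0)$ near the origin.

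First I would dispose of the case $\alpha(A) = \infty^+$. By the second alternative in Definition \ref{NokikovShubinInvariant}, this means there exists some $t_0 > 0$ with $F_A(t_0) = F_A(0)$. By right-continuity and monotonicity of $F_A$, this forces $F_A(t) - F_A(0) = 0$ for all $t \in [0,t_0]$, so the integrand vanishes on $[0,t_0]$ and the integral over $[t_0, \|A\|]$ is bounded by $\ln(\|A\|/t_0)$. Hence the limit is finite in this case.

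The main case is $\alpha(A) = \lambda \in (0, \infty)$. By definition of the liminf, for any $\epsilon \in (0,\lambda)$ there exists $\delta \in (0,1)$ such that for every $t \in (0,\delta)$,
\[
\frac{\ln(F_A(t) - F_A(0))}{\ln(t)} \geq \lambda - \epsilon.
\]
Since $\ln(t) < 0$ on $(0,\delta)$, multiplying by $\ln(t)$ flips the inequality and exponentiating yields
\[
F_A(t) - F_A(0) \leq t^{\lambda-\epsilon} \quad \text{for all } t \in (0,\delta).
\]
Therefore on the interval $(0,\delta)$ the integrand is controlled by $t^{\lambda-\epsilon-1}$, and since $\lambda - \epsilon > 0$ this exponent exceeds $-1$, so
\[
\int_0^\delta \tfrac{1}{t}\bigl(F_A(t) - F_A(0)\bigr)\, dt \;\leq\; \int_0^\delta t^{\lambda-\epsilon-1}\, dt \;=\; \frac{\delta^{\lambda-\epsilon}}{\lambda - \epsilon} \;<\; \infty.
\]
On the remaining range $[\delta, \|A\|]$, we use the trivial bound $F_A(t) - F_A(0) \leq 1$ to obtain
\[
\int_\delta^{\|A\|} \tfrac{1}{t}\bigl(F_A(t) - F_A(0)\bigr)\, dt \;\leq\; \int_\delta^{\|A\|} \tfrac{1}{t}\, dt \;=\; \ln\!\bigl(\|A\|/\delta\bigr) \;<\; \infty.
\]
Adding the two bounds and taking $\epsilon \to 0$ gives the desired finiteness.

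There is no serious obstacle here; the only point requiring care is the sign manipulation when passing from the $\liminf$ definition of $\alpha(A)$ to the polynomial bound on $F_A(t) - F_A(0)$, and choosing $\epsilon$ small enough that $\lambda - \epsilon > 0$ so the resulting singular integral $\int_0 t^{\lambda-\epsilon-1}\,dt$ converges.
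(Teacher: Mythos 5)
Your proof is correct and follows essentially the same route as the paper: handle $\alpha(A)=\infty^+$ by noting the integrand vanishes near zero, and otherwise extract a bound $F_A(t)-F_A(0)\leq t^{\lambda-\epsilon}$ near the origin from the $\liminf$ definition so that the integrand is dominated by the integrable function $t^{\lambda-\epsilon-1}$. The only cosmetic omission is the case $\alpha(A)=\infty$ (with $F_A(t)>F_A(0)$ for all $t>0$), which your argument covers verbatim by choosing any finite exponent below the $\liminf$.
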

\begin{proof}
If $\alpha(A) = \infty^+$, then $F_A(t) - F_A(0)$ is a right continuous function bounded that is zero on a neighbourhood of zero.  Hence the result follows.  If $\alpha(A) \in (0, \infty]$, then it is trivial to verify from Definition \ref{NokikovShubinInvariant} that there exists a $\delta > 0$ and an $\lambda \in (0, \alpha(A))$ such that $F(t) - F(0) \leq t^\lambda$ for all $0 \leq t \leq \delta$.  Hence
\[
0 \leq \int^\delta_0 \frac{1}{t}(F_A(t) - F_A(0)) \, dt \leq \int^\delta_0 t^{\lambda - 1} \, dt < \infty.
\]
Thus the result follows as $F_A(t) - F_A(0)$ is a right continuous function bounded.
\end{proof}
Furthermore, the following result provides information on how to extract information from the conclusion of Lemma \ref{finiteNSimpliesfiniteintegral} to obtain information about integrating logarithms against the spectral measure.
\begin{lem}[See \cite{Luck}*{Lemma 3.15(1)}]
\label{changefromtraceoflogarithmtoNSinvariant}
Let $\mathfrak{M}$ be a finite von Neumann algebra with faithful, normal, tracial state $\tau$.  If $A \in \mathfrak{M}$ is a positive operator, $F_A$ is the spectral density function of $A$, and $\mu_A$ is the spectral measure of $A$, then
\[
\lim_{\epsilon \to 0} \int^{\left\|A\right\|}_\epsilon \frac{1}{t}(F_A(t) - F_A(0))\, dt < \infty
\]
if and only if 
\[
\lim_{\epsilon \to 0} \int^{\left\|A\right\|}_\epsilon \ln(t) \, d\mu_A(t) > -\infty.
\]
\end{lem}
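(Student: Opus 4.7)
The plan is to reduce the equivalence to a single Fubini computation using the auxiliary function $G(t) := F_A(t) - F_A(0) = \mu_A((0,t])$. This $G$ is nondecreasing, right-continuous, and satisfies $G(0^+) = 0$ (the atom of $\mu_A$ at $0$ equals $F_A(0)$ and has been subtracted off). Since both integrals in the statement start at $\epsilon > 0$, the measure $dG$ agrees with $d\mu_A$ throughout the region of interest, so I can freely replace $d\mu_A$ with $dG$ below.

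First, I would split both integrals at $t=1$ (assuming without loss of generality $\|A\| \geq 1$; the case $\|A\| < 1$ is a sub-case of the same argument on $(0, \|A\|]$). Since $G$ is bounded and $\ln t$ is bounded on $[1, \|A\|]$, the contributions of $[1, \|A\|]$ to each side are finite constants independent of $\epsilon$. Thus the claimed equivalence reduces to showing
\[
\lim_{\epsilon \to 0} \int_\epsilon^{1} \frac{G(t)}{t}\, dt < \infty \quad \Longleftrightarrow \quad \lim_{\epsilon \to 0} \int_\epsilon^{1} (-\ln t)\, dG(t) < \infty,
\]
where both limits exist in $[0, \infty]$ by nonnegativity of the integrands.

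The key step is a Fubini interchange based on the identity $-\ln t = \int_t^{1} s^{-1}\, ds$ for $t \in (0,1]$. Applying this and swapping the order of integration (permissible since the integrand is nonnegative) gives
\[
\int_\epsilon^1 (-\ln t)\, dG(t) \;=\; \int_\epsilon^1 \int_t^1 \frac{ds}{s}\, dG(t) \;=\; \int_\epsilon^1 \frac{G(s) - G(\epsilon)}{s}\, ds.
\]
Now I apply the monotone convergence theorem to the right-hand side. For each fixed $s \in (0,1)$, the quantity $\mathbf{1}_{[\epsilon, 1]}(s) \cdot \tfrac{G(s) - G(\epsilon)}{s}$ is monotone increasing as $\epsilon \downarrow 0$ (because $G(\epsilon)$ decreases and the indicator eventually becomes $1$), with pointwise limit $\tfrac{G(s)}{s}$ since $G(0^+) = 0$. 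Therefore
\[
\lim_{\epsilon \to 0} \int_\epsilon^1 (-\ln t)\, dG(t) \;=\; \int_0^1 \frac{G(s)}{s}\, ds \quad \in [0, \infty].
\]
Combining this with the $[1, \|A\|]$ reduction gives the stated equivalence.

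The proof is largely mechanical; there is no real obstacle, only the minor bookkeeping issue of working with $G = F_A - F_A(0)$ instead of $F_A$ itself (so that $G(0^+) = 0$ and Fubini applies cleanly), plus the justification of swapping $\lim_{\epsilon \to 0}$ with the integral via monotone convergence on the varying domain $[\epsilon, 1]$.
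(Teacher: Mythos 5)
Your argument is correct. Note that the paper does not actually prove this lemma --- it is quoted from L\"uck's book (Lemma 3.15(1)) without proof --- so there is no in-paper argument to compare against; your Fubini/Tonelli computation with $G(t)=F_A(t)-F_A(0)=\mu_A((0,t])$, the identity $-\ln t=\int_t^1 s^{-1}\,ds$, and monotone convergence to pass $\epsilon\to 0$ is essentially the standard partial-integration proof that L\"uck's reference supplies, and all the steps (positivity of the integrands justifying the interchange, $G(0^+)=0$ killing the $G(\epsilon)$ term, finiteness of the contributions from $[1,\|A\|]$) check out.
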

Combining the above results, we obtain the following.
\begin{thm}
\label{finitetraceoflogarithm}
Let $n,\ell \in \mathbb{N}$, let $X_1, \ldots, X_n$ be freely independent semicircular variables or freely independent Haar unitaries, and let $\mathcal{A}$ be the $*$-algebra generated by $X_1, \ldots, X_n$.  Then 
\[
\lim_{\epsilon \to 0} \int^{\left\|A\right\|}_\epsilon \ln(t) \, d\mu_A(t) > -\infty
\]
for all positive $A \in \mathcal{M}_m(\mathcal{A}) \setminus \{0\}$.  Furthermore, if $\mu_A$ does not have an atom at zero (e.g. when $\ell = 1$ by Theorem \ref{mainapplicationofresult}), then
\[
\int^{\left\|A\right\|}_0 \ln(t) \, d\mu_A(t) > -\infty.
\]
\end{thm}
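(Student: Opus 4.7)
The plan is to chain together the lemmas already developed in this section. Since Theorem \ref{cauchytransformofsemicircularsisalgebraic} gives algebraicity of the Cauchy transform when $X_1,\ldots,X_n$ are free semicirculars, and the proof of \cite{Sa}*{Theorem 3.6} gives the analogous statement for free Haar unitaries, we know in both settings that for any positive $A\in\mathcal{M}_m(\mathcal{A})$ the Cauchy transform $G_{\mu_A}$ of the spectral measure of $A$ (with respect to the canonical trace on $\mathcal{M}_m(\mathfrak{M})$) is an algebraic element of $\mathbb{C}_{\mathrm{alg}}[[\{\frac{1}{z}\}]]$. The strategy is then to feed this algebraicity into Lemma \ref{algebraiccauchytransformimpliestheNSinvariantisgood} to conclude that the Novikov--Shubin invariant $\alpha(A)$ is either a non-zero rational number or $\infty^+$; in particular $\alpha(A)\neq 0$.

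Once $\alpha(A)\neq 0$ is established, Lemma \ref{finiteNSimpliesfiniteintegral} applies and yields
\[
\lim_{\epsilon\to 0}\int_\epsilon^{\|A\|}\tfrac{1}{t}\bigl(F_A(t)-F_A(0)\bigr)\,dt<\infty.
\]
Lemma \ref{changefromtraceoflogarithmtoNSinvariant} then translates this directly into
\[
\lim_{\epsilon\to 0}\int_\epsilon^{\|A\|}\ln(t)\,d\mu_A(t)>-\infty,
\]
which is the first assertion of the theorem. The hypothesis $A\neq 0$ is used only to rule out the degenerate case where the spectral measure is concentrated at $0$ (so that $F_A$ is well defined as a spectral density function of a non-trivial operator and the cited lemmas apply to $A$ as a positive element); no non-atomicity assumption is needed here because the lemmas allow $F_A(0)>0$.

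For the ``furthermore'' statement, I would observe that for a positive operator $A$, $F_A(0)=\tau(E_A(0))=\tau(\mathbf{1}_{\{0\}}(A))=\mu_A(\{0\})$, so the assumption that $\mu_A$ has no atom at $0$ is exactly $F_A(0)=0$. Under this assumption the singleton $\{0\}$ has $\mu_A$-measure zero, so
\[
\lim_{\epsilon\to 0}\int_\epsilon^{\|A\|}\ln(t)\,d\mu_A(t)=\int_{(0,\|A\|]}\ln(t)\,d\mu_A(t)=\int_0^{\|A\|}\ln(t)\,d\mu_A(t),
\]
and the first part of the theorem gives the finiteness claim. The application to the case $\ell=1$ follows from Theorem \ref{mainapplicationofresult}, which forbids atoms of $\mu_A$ with measure strictly between $0$ and $1$ (so since $A\neq 0$, there is no atom at $0$).

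The main obstacle, if any, is purely bookkeeping: one must confirm that $A\in\mathcal{M}_m(\mathcal{A})$ rather than $A\in\mathcal{A}$ does not cause trouble. But Theorem \ref{cauchytransformofsemicircularsisalgebraic} is already stated for matricial polynomials, and the Novikov--Shubin machinery of Lemmas \ref{algebraiccauchytransformimpliestheNSinvariantisgood}--\ref{changefromtraceoflogarithmtoNSinvariant} applies inside the tracial von Neumann algebra $(\mathcal{M}_m(\mathfrak{M}),\frac{1}{m}\tau_m)$, so the entire argument proceeds verbatim at the matricial level. Everything else is a direct chain of implications through the lemmas established above.
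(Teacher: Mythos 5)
Your proof is correct and follows exactly the route the paper intends: the paper's own "proof" consists of the single phrase "Combining the above results, we obtain the following," meaning precisely the chain algebraicity of $G_{\mu_A}$ (Theorem \ref{cauchytransformofsemicircularsisalgebraic}, resp.\ Sauer's result for Haar unitaries) $\Rightarrow$ $\alpha(A)\neq 0$ (Lemma \ref{algebraiccauchytransformimpliestheNSinvariantisgood}) $\Rightarrow$ finiteness of $\int \frac{1}{t}(F_A(t)-F_A(0))\,dt$ (Lemma \ref{finiteNSimpliesfiniteintegral}) $\Rightarrow$ the logarithmic integral statement (Lemma \ref{changefromtraceoflogarithmtoNSinvariant}). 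Your handling of the "furthermore" clause and of the $\ell=1$ case via Theorem \ref{mainapplicationofresult} is also exactly what the paper's parenthetical intends.
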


\end{document}